\documentclass[12pt, twoside, fleqn, a4paper]{article}

\usepackage{latexsym}  
\usepackage{amscd}    
\usepackage{theorem}  
\usepackage{pifont}  
\usepackage{mathbbol} 
\usepackage{amsfonts}  
\usepackage{xspace}  
\usepackage{amssymb} 
\usepackage{fancyhdr} 
\usepackage{mathrsfs} 
\usepackage{amsmath} 
\usepackage{graphics} 
\usepackage{graphicx} 
\usepackage{euscript}
\usepackage{psfrag}
\usepackage{empheq} 
\usepackage{mathabx} 
\usepackage[parfill]{parskip}     
\usepackage[colorlinks=true, allcolors=blue]{hyperref}
\usepackage{cancel} 
\usepackage{bigints} 

\title{Uniqueness of the equilibrium state in the dynamics of holomorphic correspondences}
\author{Bharath Krishna Seshadri\footnote{Indian Institute of Science Education and Research Thiruvananthapuram (IISER-TVM). \\ ORCID: 0009 - 0004 - 0541 - 8040\ \ email: \texttt{bharathmaths21@iisertvm.ac.in}}\ \ \ and\ \ \ Shrihari Sridharan\footnote{Indian Institute of Science Education and Research Thiruvananthapuram (IISER-TVM). \\ ORCID: 0000 - 0003 - 2434 - 4767\ \ email: \texttt{shrihari@iisertvm.ac.in}\ (Corresponding Author) \\ The second named author thanks the support provided by NBHM through a research grant \\ No. 02011/35/2025/NBHM(R.P)/R\&D II/9832}}

\DeclareFontFamily{OT1}{pzc}{}
\DeclareFontShape{OT1}{pzc}{m}{it}%
              {<-> s * [0.900] pzcmi7t}{}
\DeclareMathAlphabet{\mathpzc}{OT1}{pzc}%
                                 {m}{it}

{\theorembodyfont{\slshape} \newtheorem{theorem}{Theorem}[section]}
{\theorembodyfont{\slshape} \newtheorem{definition}[theorem]{Definition}}
{\theorembodyfont{\slshape} \newtheorem{lemma}[theorem]{Lemma}}
{\theorembodyfont{\slshape} \newtheorem{proposition}[theorem]{Proposition}}
{\theorembodyfont{\slshape} }
{\theorembodyfont{\slshape} } 
{\theorembodyfont{\slshape} } 
{\theorembodyfont{\slshape} \newtheorem{remark}[theorem]{Remark}}
{\theorembodyfont{\slshape} }

\numberwithin{equation}{section}

\newenvironment{proof}{\paragraph{Proof:}}{\hfill$\bullet$}

\begin{document}

\maketitle

\begin{abstract} 
\noindent 
This paper concerns the study of the existence of a unique equilibrium state for a H\"{o}lder continuous function under the dynamics of a holomorphic correspondence defined on the Riemann sphere. We mainly work with the correspondence restricted on the support of the Dinh-Sibony measure and identify topologically interesting correspondences, namely distance expanding ones. Further, we consider H\"{o}lder continuous potentials defined on the support of the Dinh-Sibony measure, for which we prove the uniqueness of equilibrium state. Along the way, we also prove some interesting topological results related to holomorphic correspondences. Finally, we establish a result connecting the Ruelle operator for holomorphic correspondences and the unique equilibrium state under a suitable hypothesis. The concluding part of the paper is devoted to some discussion related to the hypothesis involved and providing some examples. 
\end{abstract}

\begin{tabular}{l l} 
{\bf Keywords} & Unique equilibrium state \\ 
& Distance expanding correspondences \\ 
& Entropy and pressure \\ 
& Ruelle operator \\
& \\
& \\
{\bf MSC Subject} & 37A35, 37A60, 37F80 \\ 
{\bf Classifications} & 
\end{tabular} 
\bigskip 

\newpage 

\section{Introduction} 

The concept of thermodynamic formalism in dynamical systems is mainly associated with the study of quantities called entropy and pressure (associated with a real-valued potential) and the transfer operator (pertaining to a fixed potential) under the influence of the dynamics of the system under consideration. This theory is well-developed in the case where the dynamical system is constructed by iterating a continuous map, say $T$ on a compact metric space, say $X$. In this paper, we are interested in working with a dynamical system obtained by iterating a holomorphic correspondence defined on the Riemann sphere $\widehat{\mathbb{C}} = \mathbb{C} \cup \{ \infty \}$, which naturally has a complex analytic flavour. This dynamical system is a generalisation of the case of rational maps defined on $\widehat{\mathbb{C}}$. 

Let $T : X \longrightarrow X$ be a continuous map defined on a compact metric space and let $\phi \in \mathcal{C} (X, \mathbb{R})$, the Banach space of real-valued continuous functions defined on $X$. Let $M(T)$ denote the space of all Borel probability measures supported on $X$ that are $T$-invariant. The variational principle, proved in \cite{wal:1975} by Walters, states that $\displaystyle{{\rm Pr} (T, \phi) = \sup\limits_{\mu\, \in\, M(T)} \left\{ h_{\mu} (T) + \int \phi \mathrm{d}\mu \right\}}$, where ${\rm Pr} (T, \phi)$ denotes the pressure of $\phi$ with respect to the dynamics of $T$ and $h_{\mu} (T)$ denotes the entropy of $T$ with respect to the measure $\mu$. An interesting question about this variational principle is about the existence of a unique measure, say $\mu_{\phi}$ that satisfies $\displaystyle{{\rm Pr} (T, \phi) = h_{\mu_{\phi}} (T) + \int \phi \mathrm{d}\mu_{\phi}}$. For the particular case, when $\phi \equiv 0$, the constant zero function, it is known that ${\rm Pr} (T, \phi)$ equals the topological entropy of the dynamical system. In this case, Walters studied maps $T$ for which a unique measure of maximal entropy exists, in the work \cite{wal:1974}. In addition, he also studied the invariant measures and equilibrium states of certain expanding maps in \cite{wal:1978}. A good compilation of these ideas across various dynamical systems can be found in books such as \cite{rue:1978, pp:1990, zins:2000, pu:2010}.

With regards to the dynamics of rational maps defined on the Riemann sphere, the works \cite{pry:1990, du:1991} investigated the problem of the existence of a unique equilibrium state and studied its ergodic theoretic properties. These works considered the dynamics of a rational map $R$ restricted on its Julia set $J_{R}$. Unlike the works where the dynamics was defined by a continuous map on some compact space (typically the shift map and the interval map), the work \cite{du:1991} by Denker and Urba\'{n}ski did not impose any expanding condition on the rational map. Rather, they worked with the H\"{o}lder continuous potentials $\phi \in \mathcal{C} (J_{R}, \mathbb{R})$ that satisfy the condition ${\rm Pr} (R, \phi) > \sup \phi$ and proved results on the uniqueness of the equilibrium state corresponding to $\phi$, using the theory of conformal measures. In \cite{pry:1990}, Przytycki explored the same question using different techniques and proved various results. 

Motivated by these works, we investigate the existence of an equilibrium state and the uniqueness of such a state, for appropriate potentials under the dynamics of a holomorphic correspondence defined on $\widehat{\mathbb{C}}$. We focus our attention on distance expanding holomorphic correspondences, which are defined analogous to the case of the distance expanding maps. Once again, the H\"{o}lder continuous potentials turn out to be interesting, in our setting too. One of the crucial differences in the study of correspondences is that they are generally set-valued, and thus, typical points have multiple forward images, as well. Nevertheless, recent works \cite{ss:2025} and \cite{ss:2026} have enabled us to consider this problem in the setting of holomorphic correspondences. The work \cite{ss:2026} proved a variational principle for holomorphic correspondences after introducing the notions of measure theoretic entropy and pressure in this setting. This naturally begs the question concerning the idea of the uniqueness of the equilibrium state. Another recent work namely \cite{llz:2023} explored thermodynamic formalism for correspondences (set-valued maps on a compact space), although their notions of entropy and pressure are different from our construction. Further, our main focus is to study holomorphic correspondences and as a result, we also prove some results exclusive to this setting (see Section \ref{sec:four} in particular), which one obtains by appealing to the underlying analytic structure. These interesting results also play a role in the proof of our main theorems. We are now ready to state the main results of the paper. 

\begin{theorem}
\label{thm:uniqueness} 
Let $\digamma$ be a distance expanding holomorphic correspondence defined on $\widehat{\mathbb{C}}$. Let $\Omega_{{\rm DS}}$ denote the support of the Dinh-Sibony measure of $\digamma$. Then, any H\"{o}lder continuous function $f \in \mathcal{C} \left( \Omega_{{\rm DS}}, \mathbb{R} \right)$ has a unique equilibrium state. 
\end{theorem}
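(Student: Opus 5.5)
Our plan is to reduce the problem to a setting where the classical theory of expanding maps applies, through the orbit space (the space of forward paths) of the correspondence. Let $\Omega = \Omega_{{\rm DS}}$ and consider the path space
\[
\Omega^{\mathbb{N}}_{\digamma} = \left\{ (x_{0}, x_{1}, x_{2}, \dots) : x_{n} \in \Omega,\ x_{n+1} \in \digamma (x_{n}) \right\},
\]
which carries the left shift $\sigma$ and the product-type metric $d_{\theta} (\underline{x}, \underline{y}) = \sum_{n} \theta^{n} d(x_{n}, y_{n})$ for some $0 < \theta < 1$. Since the entropy and pressure of \cite{ss:2026} are defined via this path space, a measure $\mu$ for the correspondence (in the sense of \cite{ss:2026}) corresponds to a $\sigma$-invariant measure $\widetilde{\mu}$ on $\Omega^{\mathbb{N}}_{\digamma}$ with projection $\mu$. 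We lift $f$ to $\widetilde{f}(\underline{x}) = f(x_{0})$, which is Hölder continuous for $d_{\theta}$. The variational principle of \cite{ss:2026} then identifies ${\rm Pr}(\digamma, f)$ with ${\rm Pr}(\sigma, \widetilde{f})$ up to the normalisations used there. Equilibrium states for $f$ therefore correspond to equilibrium states for $\widetilde{f}$ under $\sigma$, and it suffices to prove uniqueness for the latter, checking that the correspondence between the two notions is bijective.

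The key step is to show that $(\Omega^{\mathbb{N}}_{\digamma}, \sigma)$ is a \emph{distance expanding, open, topologically transitive} system in the sense of \cite{pu:2010}. Distance expansion should come from the hypothesis on $\digamma$: locally the inverse branches of $\digamma$ near points of $\Omega$ are contracting by a factor $\lambda^{-1}$. Choosing $\theta$ appropriately relative to $\lambda$, we show that $\sigma$ expands $d_{\theta}$ on small scales, with $\sigma$ locally injective. Openness of $\sigma$ needs $\Omega$ to be completely invariant, $\digamma(\Omega) = \Omega = \digamma^{-1}(\Omega)$, together with local continuity of the branches. For this we use the topological results of Section~\ref{sec:four}, which rely on the analytic structure and the properties of the Dinh--Sibony measure. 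Transitivity, or at least topological mixing on $\Omega$, should follow from the fact that iterated preimages of generic points equidistribute to the Dinh--Sibony measure, so any open set in $\Omega$ eventually covers all of $\Omega$ under $\digamma^{n}$.

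With these properties in hand, the standard theory applies. The Ruelle operator $\mathcal{L}_{\widetilde{f}}$ has a unique eigenmeasure $m$ with eigenvalue $e^{{\rm Pr}}$, together with a positive Hölder eigenfunction $h$. The measure $h\,m$ is then a Gibbs measure. Any equilibrium state must be absolutely continuous with respect to this Gibbs measure; combined with ergodicity, obtained through exactness, this gives uniqueness (Theorem~5.6.2-type arguments in \cite{pu:2010}, or Bowen's specification approach). If topological mixing fails, we will instead use spectral decomposition into finitely many mixing pieces and argue that $\sigma$ cyclically permutes them, which still yields uniqueness in the transitive case.

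The main obstacle is the second step, because the expansion hypothesis is stated for $\digamma$ on $\widehat{\mathbb{C}}$ or $\Omega$ rather than for the shift. Two issues arise. First, branches of $\digamma$ may collide at critical values, which would make $\sigma$ fail to be locally injective or open. Second, we must ensure that the lift preserves the bijection between equilibrium states. I would first verify that distance expansion forces $\Omega$ to avoid the critical values, which is where Section~\ref{sec:four} should be used. Only then would I construct the contracting inverse branches uniformly on balls of a fixed radius.
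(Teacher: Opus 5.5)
Your overall route matches the paper's: lift to the orbit space, check that the shift is open, distance expanding and transitive, apply \cite{pu:2010} (Theorem 4.6.2) to $F=f\circ\Pi_{0}$, and push the result down by $(\Pi_{0})_{*}$. The genuine gap is the descent step. You list it as an obstacle but never resolve it, and ``checking that the correspondence is bijective'' is the wrong target. The map $(\Pi_{0})_{*}$ is far from injective on $\sigma^{\Gamma}$-invariant measures. By Definition~\ref{defn:entropycorr} and Theorem~\ref{thm:entropyconnect}, $h_{\nu}(\Gamma)$ is the \emph{supremum} of $h_{\mu}(\sigma^{\Gamma})$ over all invariant lifts $\mu$ of $\nu$. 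Pushing the unique equilibrium state $\mu_{F}$ down does give an equilibrium state $\nu_{f}$, since $h_{\nu_{f}}(\Gamma)\ge h_{\mu_{F}}(\sigma^{\Gamma})$ and ${\rm Pr}(\Gamma,f)={\rm Pr}(\sigma^{\Gamma},F)$. For uniqueness, however, you need every equilibrium state $\nu$ of $f$ to have a lift $\mu_{1}$ with $h_{\mu_{1}}(\sigma^{\Gamma})=h_{\nu}(\Gamma)$. If the supremum were not attained, every lift of $\nu$ would fall strictly short of ${\rm Pr}(\sigma^{\Gamma},F)$, and uniqueness of $\mu_{F}$ would say nothing about $\nu$. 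The missing idea is the one the paper uses. Distance expansion makes $\sigma^{\Gamma}$ forward expansive, so $\mu\mapsto h_{\mu}(\sigma^{\Gamma})$ is upper semicontinuous (\cite{pu:2010}, Theorems 3.1.1 and 2.5.6). The fibre $\mathcal{M}\bigl(\mathscr{P}^{\Gamma}_{{\rm inv}}(\Omega_{{\rm DS}}),\sigma^{\Gamma}\bigr)\cap\bigl((\Pi_{0})_{*}\bigr)^{-1}(\nu)$ is compact. Hence a maximizing lift $\mu_{1}$ exists; it is an equilibrium state for $F$, so $\mu_{1}=\mu_{F}$ and $\nu=\nu_{f}$.

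Three further points in your second step need repair.
\begin{enumerate}
\item Lift to $\mathscr{P}^{\Gamma}_{{\rm inv}}(\Omega_{{\rm DS}})$, which records the variety index $\alpha_{r}$ at each step. Theorems~\ref{thm:entropyconnect} and~\ref{thm:eqpres} are statements about that space and fail in general for your unlabelled path space. For $\Gamma=\Gamma_{1}+\Gamma_{1}$ the labelled space is the unlabelled one times a full $2$-shift, so the pressures differ by $\log 2$. Your choice of $\theta$ relative to $\lambda$ is harmless, since pressure and equilibrium states depend only on the topology.
\item Openness does not require complete invariance of $\Omega_{{\rm DS}}$, which is unavailable because $\Omega_{{\rm DS}}$ is in general not forward invariant under $\digamma$. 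What is used is backward invariance (so a preimage found near $z_{0}$ stays in $\Omega_{{\rm DS}}$) together with Theorem~\ref{thm:open}. That theorem comes from Remmert's rank theorem because $\pi_{2}|_{|\Gamma|}$ has finite fibres, and it holds at critical values too. So there is no need to show $\Omega_{{\rm DS}}$ avoids critical values. Local injectivity of the shift follows directly from the expansion inequality, since two distinct nearby points cannot share an image.
\item Transitivity is needed for $\sigma^{\Gamma}$ on the path space, whose open sets constrain finitely many coordinates and labels, not for $\digamma$ on $\Omega_{{\rm DS}}$. The paper obtains it (Lemma~\ref{lem:toptrans}) from density of backward $\sigma^{\Gamma}$-orbits \cite{ss:2025}. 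Your equidistribution argument would have to be combined with openness and backward invariance to build connecting paths inside prescribed cylinders. It must also deal with exceptional points, where equidistribution is not available.
\end{enumerate}
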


The set $\Omega_{{\rm DS}}$, introduced in Theorem \ref{thm:uniqueness}, was first studied in \cite{ds:2006} and is observed to be the analogue of the Julia set from the study of rational maps and hence it is the natural domain to consider, in our setting. In Sections \ref{sec:three} and \ref{sec:four}, we define and provide more details on the set $\Omega_{{\rm DS}}$ and its topological properties that makes it interesting for one to study the restriction of $\digamma$ on the set $\Omega_{{\rm DS}}$.

An important object in thermodynamic formalism is the Ruelle operator. In the case of dynamics of rational maps, the unique equilibrium state has some nice connections with the Ruelle operator, as explored by Przytycki in \cite{pry:1990} and Denker and Urba\'{n}ski in \cite{du:1991}. The Ruelle operator in the setting of holomorphic correspondences was studied in detail in \cite{ss:2025}. We now state a result that connects the unique equilibrium state obtained from Theorem \ref{thm:uniqueness} with the Ruelle operator, in this setting. For a set $X$ we use the notation $\mathbb{1}_{X}$ to denote the real valued function that takes the constant value $1$ at all points of $X$. The space $\mathscr{S}^{\Gamma}$ mentioned in the following theorem will be introduced in Section \ref{sec:three}. Further, ${\rm Pr} (\Gamma, f)$ denotes the pressure of potential $f$ with respect to the dynamics of $\Gamma$ and $\mathcal{L}_{f}$ denotes the Ruelle operator associated to $f$.

\begin{theorem}
\label{thm:ruelleuniqueconnnect}
Let $\digamma$ be a distance expanding holomorphic correspondence on $\widehat{\mathbb{C}}$. Let $\Omega_{{\rm DS}}$ denote the support of the Dinh-Sibony measure of $\digamma$. Consider a H\"{o}lder continuous function $f \in \mathcal{C} (\Omega_{{\rm DS}}, \mathbb{R})$ satisfying $\mathcal{L}_{f} (\mathbb{1}_{\Omega_{{\rm DS}}}) = e^{{\rm Pr} (\Gamma, f)} (\mathbb{1}_{\Omega_{{\rm DS}}})$. Then, there exists a measure $\nu_{f} \in \mathscr{S}^{\Gamma}$ such that $\mathcal{L}_{f}^{*} \nu_{f} = e^{{\rm Pr} (\Gamma, f)} \nu_{f}$, where $\mathcal{L}_{f}^{*}$ denotes the dual to the Ruelle operator $\mathcal{L}_{f}$. Further, this measure $\nu_{f}$ also turns out to be the unique equilibrium state for the potential $f$.
\end{theorem}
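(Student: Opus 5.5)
The plan is to combine Theorem \ref{thm:uniqueness} with the normalisation hypothesis $\mathcal{L}_{f}(\mathbb{1}_{\Omega_{{\rm DS}}}) = e^{{\rm Pr}(\Gamma, f)} \mathbb{1}_{\Omega_{{\rm DS}}}$. The argument has three steps: construct an eigenmeasure of the dual operator, show that it lies in $\mathscr{S}^{\Gamma}$, and show that it attains the supremum in the variational principle of \cite{ss:2026}. Uniqueness then gives the identification with the equilibrium state.

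\textbf{Existence.} Set $\lambda = e^{{\rm Pr}(\Gamma, f)}$ and consider the map
\[
\nu \longmapsto \frac{\mathcal{L}_{f}^{*}\nu}{\langle \mathcal{L}_{f}^{*}\nu, \mathbb{1}_{\Omega_{{\rm DS}}}\rangle}
\]
on the weak$^{*}$-compact convex set of Borel probability measures on $\Omega_{{\rm DS}}$. This map is continuous, because $\mathcal{L}_{f}$ is a positive bounded operator on $\mathcal{C}(\Omega_{{\rm DS}}, \mathbb{R})$ (see \cite{ss:2025}). The Schauder--Tychonoff theorem therefore gives a fixed point $\nu_{f}$.

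By the hypothesis,
\[
\langle \mathcal{L}_{f}^{*}\nu, \mathbb{1}\rangle = \langle \nu, \mathcal{L}_{f}\mathbb{1}\rangle = \lambda
\]
for every probability measure $\nu$. So the normalising constant is exactly $\lambda$, and $\mathcal{L}_{f}^{*}\nu_{f} = \lambda \nu_{f}$. Because of the normalisation, the operator $\lambda^{-1}\mathcal{L}_{f}$ is Markov. The natural candidate is therefore a fixed point of the Markov dual $\lambda^{-1}\mathcal{L}_{f}^{*}$, and it is this fixed point that I expect to be $\Gamma$-invariant in the correspondence sense.

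\textbf{Membership in $\mathscr{S}^{\Gamma}$.} I expect $\mathscr{S}^{\Gamma}$ to be the set of measures invariant with respect to the lifted dynamics of the correspondence, in the sense used in \cite{ss:2026}. To place $\nu_{f}$ there, I would unwind the definition of $\mathcal{L}_{f}$ as a weighted sum over the inverse branches of $\digamma$, counted with multiplicity. This should show that the relation $\lambda^{-1}\mathcal{L}_{f}^{*}\nu_{f} = \nu_{f}$ is exactly the defining condition for $\mathscr{S}^{\Gamma}$, with transition weights $\lambda^{-1}e^{f}$ that sum to one by the hypothesis.

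\textbf{Equilibrium property.} This is the main obstacle. I would use the distance expanding property to obtain a finite generating partition of sufficiently small diameter, on which inverse branches of $\digamma^{n}$ are well defined. Using Hölder continuity of $f$, I would then prove a Gibbs-type bound: on $n$-cylinders $C$ of the partition, $\nu_{f}(C)$ is comparable, up to a bounded distortion constant, to
\[
\exp\left(S_{n} f - n\,{\rm Pr}(\Gamma, f)\right).
\]
This follows from iterating $\mathcal{L}_{f}^{*}\nu_{f} = \lambda\nu_{f}$ and applying the standard distortion estimate for Birkhoff sums along contracting inverse branches.

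Feeding this bound into the measure-theoretic entropy of \cite{ss:2026}, computed via a Shannon--McMillan--Breiman argument or directly from the partition entropy, gives
\[
h_{\nu_{f}}(\Gamma) + \int f \, \mathrm{d}\nu_{f} \geq {\rm Pr}(\Gamma, f).
\]
The variational principle of \cite{ss:2026} supplies the reverse inequality, so $\nu_{f}$ is an equilibrium state, and Theorem \ref{thm:uniqueness} identifies it as the unique one.

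The difficulty throughout is that the entropy here is defined for correspondences. The Gibbs estimate must therefore be phrased for the multi-valued branches counted with multiplicity. I would handle this by working on the space of orbits, where $\Gamma$ acts as a shift, and transferring the estimate back to $\Omega_{{\rm DS}}$.
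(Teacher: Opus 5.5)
Step 1 is fine. Step 2 can be repaired. The relation $\lambda^{-1}\mathcal{L}_f^*\nu_f=\nu_f$ is not \emph{the} defining condition of $\mathscr{S}^{\Gamma}$, which is $(\Pi_0)_*$ of the $\sigma^\Gamma$-invariant measures on $\mathscr{P}^{\Gamma}_{\rm inv}(\Omega_{\rm DS})$. It does, however, make stationary the backward chain that moves from $z$ to $w\in\digamma^{\dagger}(z)$ with probability $\lambda^{-1}e^{f(w)}$. The Kolmogorov extension of this chain, read forwards, is a $\sigma^\Gamma$-invariant lift of $\nu_f$.

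The genuine gap is Step 3. A bound $\nu_f(C)\asymp\exp(S_nf-n\,{\rm Pr}(\Gamma,f))$ on ``$n$-cylinders'' of $\Omega_{\rm DS}$ has no meaning for a correspondence. A point has many forward paths, and $S_nf$ depends on the path, not on the point. Iterating the eigen-relation on the base only shows that $\nu_f(A)$ is comparable to a \emph{sum} of such weights over all $n$-step forward branches issuing from $A$. Moreover, $h_{\nu_f}(\Gamma)$ is a supremum over invariant lifts. A lower bound therefore needs a specific lift $\mu$ that is Gibbs for $(\sigma^\Gamma,F)$, where $F=f\circ\Pi_0$; knowing only that some lift exists gives no control on entropy. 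Your closing sentence defers exactly this point to ``working on the space of orbits''. It does not say which lift to use or how its Gibbs property would be obtained.

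The missing idea is the intertwining identity $L_F(g\circ\Pi_0)=(\mathcal{L}_fg)\circ\Pi_0$. It holds because the $\sigma^\Gamma$-preimages of an orbit starting at $z_0$ correspond, with multiplicity, to the points of $\digamma^\dagger(z_0)$. Combine it with ${\rm Pr}(\Gamma,f)={\rm Pr}(\sigma^\Gamma,F)$ from Theorem \ref{thm:eqpres}. Taking $g=\mathbb{1}_{\Omega_{\rm DS}}$, your hypothesis becomes $\widehat{L_F}\mathbb{1}=\mathbb{1}$ for the single-valued, open, distance expanding, transitive map $\sigma^\Gamma$ (Propositions \ref{prop:open} and \ref{prop:distexp}, Lemma \ref{lem:toptrans}).

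The paper then works upstairs from the outset:
\begin{itemize}
\item Theorem \ref{thm:measureexistsruelle} gives $m_F$ with $L_F^*m_F=e^{{\rm Pr}(\sigma^\Gamma,F)}m_F$.
\item Theorem \ref{thm:mapseigenfn}, applied with $h_F=\mathbb{1}$, shows that $m_F$ is $\sigma^\Gamma$-invariant and is the unique equilibrium state of $F$.
\item By the argument of Theorem \ref{thm:uniqueness}, $\nu_f=(\Pi_0)_*m_F\in\mathscr{S}^\Gamma$ is the unique equilibrium state of $f$.
\item The same identity pushes the eigen-relation down to $\mathcal{L}_f^*\nu_f=e^{{\rm Pr}(\Gamma,f)}\nu_f$.
\end{itemize}
No Gibbs estimate has to be redone.

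If you want to keep your base-first construction, note that the Markov lift from Step 2 satisfies $\widehat{L_F}^*\mu=\mu$. It is therefore an eigenmeasure of $L_F^*$ on the orbit space. What identifies it with the equilibrium state is the map-case eigenmeasure theory for $(\sigma^\Gamma,F)$, not an estimate on $\Omega_{\rm DS}$.
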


The paper is organised as follows. In Section \ref{sec:two}, we study the preliminaries on holomorphic correspondences that we require in our work. This section introduces the spaces that we work with and describes the ways in which we study the dynamics in this setting. Section \ref{sec:three} elaborates on the latest results on thermodynamic formalism in the setting of holomorphic correspondences. Here, we also define the measure theoretic entropy and pressure and state some known properties. In Section \ref{sec:four}, we focus on the topological properties of a holomorphic correspondence. These results are crucial to prove the main theorems of this work. The main result of this section is Theorem \ref{thm:open}, which can be considered as an analogue of the open mapping theorem for analytic functions from complex analysis. Section \ref{sec:five} is devoted to proving our two main theorems. Apart from showing the existence of a unique equilibrium state, we also touch upon its connection with the Ruelle operator. In the concluding section, namely Section \ref{sec:six}, we engage the audience with a discussion on the hypothesis in Theorem \ref{thm:ruelleuniqueconnnect}. We also connect our results with some examples of well-known holomorphic correspondences from other works. 

\section{Preliminaries on holomorphic correspondences}
\label{sec:two}

In this section, we introduce the dynamical system under consideration in our study. We are interested in the Riemann sphere $\widehat{\mathbb{C}}$, which is a metric space when accorded with the spherical metric, which we denote by $\rho$. The object of our interest is called holomorphic correspondence, which we shall define below. In the following definition, for $i = 1, 2$ we denote by $\pi_{i}$ the projection of points in $\widehat{\mathbb{C}} \times \widehat{\mathbb{C}}$ onto their $i^{\text{th}}$ coordinate. 

\begin{definition} 
\label{defn:holocorr} 
A holomorphic correspondence on $\widehat{\mathbb{C}}$ (denoted by $\Gamma$) is a formal sum of  (not necessarily distinct) irreducible complex subvarieties of $\widehat{\mathbb{C}} \times \widehat{\mathbb{C}}$ of dimension $1$, given by $\Gamma =  \sideset{}{'} \sum\limits_{j\, =\, 1}^{N} \Gamma_{j}$ that satisfies, 
\begin{enumerate}
\item For each $1 \le j \le N$, the maps $\left. \pi_{1} \right|_{\Gamma_{j}}$ and $\left. \pi_{2} \right|_{\Gamma_{j}}$ are surjective and,
\item The sets $\bigcup\limits_{j\, =\, 1}^{N} \pi_{1}^{-1} \{z_{0}\} \cap \Gamma_{j}$ and $\bigcup\limits_{j\,=\, 1}^{N} \pi_{2}^{-1} \{z_{0}\} \cap \Gamma_{j}$, where the points are repeated according to their multiplicities, are finite for every $z_{0} \in \widehat{\mathbb{C}}$.
\end{enumerate}
\end{definition} 

Note that the notation of the summation namely, $\sideset{}{'} \sum$ is used to highlight the fact that the varieties appearing in the sum are allowed to repeat, if necessary, according to the multiplicity of the variety. We call the set $\bigcup\limits_{j\, =\, 1}^{N} \pi_{2} \left( \pi_{1}^{-1} \{z_{0}\} \cap \Gamma_{j} \right)$ to be the \emph{set of images} of the point $z_{0}$ under the correspondence whereas the \emph{set of pre-images} of the point $z_{0}$ is the set $\bigcup\limits_{j\,=\, 1}^{N} \pi_{1} \left( \pi_{2}^{-1} \{z_{0}\} \cap \Gamma_{j} \right)$. The images and pre-images are repeated according to their multiplicities. Thus, the correspondence $\Gamma$ induces a set valued map on $\widehat{\mathbb{C}}$, which we shall denote by $\digamma$. We shall use the notations $\Gamma$ and $\digamma$ interchangeably to denote the holomorphic correspondence under consideration, as per the demands of the situation. We define the image of a subset $A \subset \widehat{\mathbb{C}}$ to be $\digamma(A) := \bigcup\limits_{j\, =\, 1}^{N} \pi_{2} \left( \pi_{1}^{-1} (A) \cap \Gamma_{j} \right)$ and the pre-image of $A \subset \widehat{\mathbb{C}}$ to be $\digamma^{\dagger} (A) := \bigcup\limits_{j\,=\, 1}^{N} \pi_{1} \left( \pi_{2}^{-1} (A) \cap \Gamma_{j} \right)$. The cardinality of $\digamma(z_{0})$ for a generic $z_{0}$ is called the \emph{forward degree} of $\digamma$ (denoted by $d_{{\rm fwd}}$) and that of $\digamma^{\dagger} (z_{0})$ for a generic $z_{0}$ is called the topological degree of $\digamma$ (denoted by $d_{{\rm top}}$). The genericity of the points mentioned above is explained by a remark from \cite{lon:2024}, which also concludes that $d_{{\rm top}} = \sum\limits_{j\, =\, 1}^{N} \delta_{j}$, where $\delta_{j}$ is the cardinality of the set $\left\{ z \in \widehat{\mathbb{C}} : (z, w) \in \Gamma_{j} \right\}$, for a generic $w \in \widehat{\mathbb{C}}$. This is one way to do dynamics with a holomorphic correspondence. 

We now introduce another perspective of the same, by introducing the space of orbits associated to points in $\widehat{\mathbb{C}}$ {\it via} the correspondence $\Gamma$. We define the space of all forward orbits of length $n \in \mathbb{Z}_{+}$ under the iteration of $\Gamma$ by 
\begin{eqnarray*}
\mathscr{P}_{n}^{\Gamma} \left( \widehat{\mathbb{C}} \right) & = & \Bigg\{ \left( z_{0}, z_{j_{1}}^{(1)}, \cdots, z_{j_{n}}^{(n)};\; \alpha_{1}, \cdots, \alpha_{n} \right)\ :\ \left( z_{j_{r - 1}}^{(r - 1)}, z_{j_{r}}^{(r)} \right) \in \Gamma_{\alpha_{r}}\ \text{where}\ z_{j_{0}}^{(0)} = z_{0} \in \widehat{\mathbb{C}}, \\ 
& & \hspace{+3cm} \alpha_{r} \in \left\{ 1, 2, \cdots, N \right\}\ \text{and}\ 1 \le j_{r} \le \lambda_{\alpha_{r}} \left( z_{j_{r - 1}}^{(r - 1)} \right)\ \text{for}\ 1 \le r \le n \Bigg\},
\end{eqnarray*} 
where $\lambda_{\alpha_{r}} \left( z_{j_{r - 1}}^{(r - 1)} \right)$ denotes the cardinality of the set $\digamma \left (z_{j_{r - 1}}^{(r - 1)} \right) \cap \Gamma_{\alpha_{r}}$. We denote a point in $\mathscr{P}_{n}^{\Gamma} \left( \widehat{\mathbb{C}} \right)$ by $\mathfrak{X}^{+}_{n} \left( z_{0}; \boldsymbol{\alpha} \right)_{\boldsymbol{j}}$ where $z_{0} \in \widehat{\mathbb{C}}$ is the starting point of the orbit, $\boldsymbol{\alpha} = \left( \alpha_{1}, \alpha_{2}, \cdots ,\alpha_{n} \right)$ is the sequence of varieties through which the orbit progresses and $\boldsymbol{j} = \left( j_{1}, j_{2}, \cdots, j_{n} \right)$ distinguishes the images of the same point that are found in the same subvariety. The space $\mathscr{P}_{n}^{\Gamma} \left( \widehat{\mathbb{C}} \right)$ is compact as a subspace of $\left( \widehat{\mathbb{C}} \right)^{n + 1} \times \{1, 2, \cdots , N\}^{n}$ where the latter space is given the appropriate product topology by considering $\widehat{\mathbb{C}}$ to have the spherical metric topology and $\{1, 2, \cdots, N\}$ to have the discrete topology. We define the projections $\Pi_{(r, n)} : \mathscr{P}_{n}^{\Gamma} \left( \widehat{\mathbb{C}} \right) \longrightarrow \widehat{\mathbb{C}}$ given by $\Pi_{(r, n)}  \left( \mathfrak{X}^{+}_{n} \left( z_{0}; \boldsymbol{\alpha} \right)_{\boldsymbol{j}} \right) = z_{j_{r}}^{(r)}$ for $0 \le r \le n$ and ${\rm Proj}_{(r, n)} : \mathscr{P}_{n}^{\Gamma} \left( \widehat{\mathbb{C}} \right) \longrightarrow \{ 1, 2, \cdots, N \}$ given by ${\rm Proj}_{(r, n)} \left(\mathfrak{X}^{+}_{n} \left( z_{0}; \boldsymbol{\alpha} \right)_{\boldsymbol{j}}\right) = \alpha_{r}$, for $1 \le j \le n$.

The space of infinitely long forward orbits under the correspondence starting at a point $z_{0} \in \widehat{\mathbb{C}}$ is defined as 
\begin{eqnarray*}
\mathscr{P}^{\Gamma} \left( z_{0} \right) & := & \Bigg\{ \left( z_{0}, z_{j_{1}}^{(1)}, z_{j_{2}}^{(2)}, \cdots;\; \alpha_{1}, \alpha_{2}, \cdots \right)\ :\ \left( z_{j_{r - 1}}^{(r - 1)}, z_{j_{r}}^{(r)} \right) \in \Gamma_{\alpha_{r}}\ \text{where}\ z_{j_{0}}^{(0)} = z_{0}, \\ 
& & \hspace{+3cm} \alpha_{r} \in \left\{ 1, 2, \cdots, N \right\}\ \text{and}\ 1 \le j_{r} \le \lambda_{\alpha_{r}} \left( z_{j_{r - 1}}^{(r - 1)} \right)\ \text{for}\ r \in \mathbb{Z}_{+} \Bigg\}.
\end{eqnarray*}
We further define the space of infinitely long forward orbits under the correspondence, with starting points in $\widehat{\mathbb{C}}$ to be $\mathscr{P}^{\Gamma} \left( \widehat{\mathbb{C}} \right) := \bigcup\limits_{z_{0}\, \in\, \widehat{\mathbb{C}}} \mathscr{P}^{\Gamma} \left( z_{0} \right)$. We denote a point in $\mathscr{P}^{\Gamma} \left( \widehat{\mathbb{C}} \right)$ by $\mathfrak{X}^{+} \left( z_{0}; \boldsymbol{\alpha} \right)_{\boldsymbol{j}}$ where $z_{0} \in \widehat{\mathbb{C}}$ is the starting point of the orbit, $\boldsymbol{\alpha} = \left( \alpha_{1}, \alpha_{2}, \cdots \right)$ is the sequence of varieties through which the orbit progresses and $\boldsymbol{j} = \left( j_{1}, j_{2}, \cdots \right)$ distinguishes the images of the same point that are found in the same subvariety. The projections $\Pi_{r}$ and ${\rm Proj}_{r}$ on the space $\mathscr{P}^{\Gamma} \left( \widehat{\mathbb{C}} \right)$ are defined analogous to the corresponding projections on $\mathscr{P}_{n}^{\Gamma} \left( \widehat{\mathbb{C}} \right)$ and are given by $\Pi_{r}  \left( \mathfrak{X}^{+} \left( z_{0}; \boldsymbol{\alpha} \right)_{\boldsymbol{j}} \right) = z_{j_{r}}^{(r)}$ for $r \in \mathbb{Z}_{+} \cup \{ 0 \}$ and ${\rm Proj}_{r} \left(\mathfrak{X}^{+} \left( z_{0}; \boldsymbol{\alpha} \right)_{\boldsymbol{j}}\right) = \alpha_{r}$ for $r \in \mathbb{Z}_{+}$. One can observe that $\mathscr{P}^{\Gamma} \left( \widehat{\mathbb{C}} \right)$ is a metric space when equipped with the metric 
\[ d \left( \mathfrak{X}^{+} \left( z_{0}; \boldsymbol{\alpha} \right)_{\boldsymbol{j}},\; \mathfrak{X}^{+} \left( w_{0}; \boldsymbol{\beta} \right)_{\boldsymbol{k}} \right)\ =\ \max \left\{ \sum\limits_{n\, \ge\, 0} \frac{1}{2^{n}} \frac{\rho \left( z_{j_{n}}^{(n)},\, w_{k_{n}}^{(n)} \right)}{\left( 1 + \rho \left( z_{j_{n}}^{(n)},\, w_{k_{n}}^{(n)} \right) \right)}\ ,\ \frac{1}{2^{\min \left\{ l\; \mid\; \alpha_{l} \ne \beta_{l} \right\}}} \right\}. \] 
This metric induces the topology on $\mathscr{P}^{\Gamma} \left( \widehat{\mathbb{C}} \right)$ that is obtained as the subspace topology of the product topology on $\left( \widehat{\mathbb{C}} \right)^{\mathbb{Z}_{+}} \times \big\{ 1, 2, \cdots , N \big\}^{\mathbb{Z}_{+}}$. Under this metric, the space $\mathscr{P}^{\Gamma} \left( \widehat{\mathbb{C}} \right)$ is compact.

Define a shift map $\sigma^{\Gamma} : \mathscr{P}^{\Gamma} \left( \widehat{\mathbb{C}} \right) \longrightarrow \mathscr{P}^{\Gamma} \left( \widehat{\mathbb{C}} \right)$ as 
\[ \sigma^{\Gamma} \left( \mathfrak{X}^{+} \left( z_{0}; \boldsymbol{\alpha} \right)_{\boldsymbol{j}} \right)\ \ =\ \ \left( z_{j_{1}}^{(1)}, z_{j_{2}}^{(2)}, \cdots;\; \alpha_{2}, \cdots \right)\ \ =\ \ \mathfrak{X}^{+} \left( z_{j_{1}}^{(1)}; \sigma \boldsymbol{\alpha} \right)_{\sigma \boldsymbol{j}}, \] 
where by prefixing $\sigma$ to an infinite lettered word $\boldsymbol{\alpha}$ or $\boldsymbol{j}$, we mean the infinite lettered word that we obtain from $\boldsymbol{\alpha}$ or $\boldsymbol{j}$ as appropriate, by dropping its first letter. Also, we use the notation $\sigma^{m} \boldsymbol{\alpha}$ and $\sigma^{m} \boldsymbol{j}$ to denote $\sigma \left( \sigma^{m - 1} \boldsymbol{\alpha} \right)$ and $\sigma \left( \sigma^{m - 1} \boldsymbol{j} \right)$ respectively, for any $m > 1$. $\sigma^{\Gamma}$ is a continuous map on $\mathscr{P}^{\Gamma} \left( \widehat{\mathbb{C}} \right)$ and serves as a tool to study the dynamics of the correspondence $\Gamma$.

With this background, we now introduce the space that we mainly focus on in the upcoming sections. Called the \emph{support of the Dinh-Sibony measure}, this space has its origins from a theorem due to Dinh and Sibony from \cite{ds:2006} and is the analogue of the Julia set from the dynamics of rational maps. Before proceeding further, we first state a result from \cite{ds:2006} which has been rephrased to fit to our context. 

\begin{theorem} \cite{ds:2006}
\label{thm:ds} 
Let $\digamma$ be a holomorphic correspondence on $\widehat{\mathbb{C}}$ with $d_{{\rm top}} > d_{{\rm fwd}}$. Then, there is an exceptional set $\mathcal{E} \subsetneq \widehat{\mathbb{C}}$ such that 
\[ \frac{1}{d_{{\rm top}}^{n}} \left( \digamma^{n} \right)^{*} \delta_{z_{0}}\ \ \rightharpoonup\ \ \mu_{\digamma},\ \ \ \ \text{in the weak* topology, for all}\ z_{0} \in \widehat{\mathbb{C}} \setminus \mathcal{E}. \] 
Here, $\left( \digamma^{n} \right)^{*} \delta_{z_{0}}$ denotes the pullback of the Dirac delta measure $\delta_{z_{0}}$ under the holomorphic correspondence $\digamma^{n}$. Moreover, $\mu_{\digamma}$ satisfies $\dfrac{1}{d_{{\rm top}}} \digamma^{*} \mu_{\digamma} = \mu_{\digamma}$.
\end{theorem}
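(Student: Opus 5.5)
The statement is the Dinh--Sibony equidistribution theorem for correspondences on $\widehat{\mathbb{C}}$ with $d_{{\rm top}} > d_{{\rm fwd}}$. I will prove it in the pluripotential style: write the measures via potentials and show that the potentials converge. Concretely, I take the Fubini--Study form $\omega$ on $\widehat{\mathbb{C}}$ and pull it back. The current $\frac{1}{d_{{\rm top}}}\digamma^{*}\omega$ is a positive measure of mass $1$, since $\digamma^{*}$ multiplies mass by $d_{{\rm top}}$ in degree $(1,1)$. I therefore write
\[ \tfrac{1}{d_{{\rm top}}}\digamma^{*}\omega = \omega + dd^{c} u, \]
with $u$ a quasi-subharmonic function, bounded above and normalised so that $\max u = 0$. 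Iterating gives
\[ \tfrac{1}{d_{{\rm top}}^{n}}(\digamma^{n})^{*}\omega = \omega + dd^{c} G_{n}, \qquad G_{n} = \sum_{k=0}^{n-1} d_{{\rm top}}^{-k}\, (\digamma^{k})^{*}u, \]
where $(\digamma^{k})^{*}u(z) = \sum u(w)$ is summed over the $k$-th images $w$ of $z$. There are at most $d_{{\rm fwd}}^{k}$ such terms, so the $k$-th summand is bounded in $L^{1}$ norm by $(d_{{\rm fwd}}/d_{{\rm top}})^{k}\|u\|_{L^{1}}$. This is exactly where $d_{{\rm top}}>d_{{\rm fwd}}$ enters. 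It makes $G_{n}$ converge in $L^{1}$, and decreasingly since $u\le 0$, to a quasi-subharmonic $G$. I then set $\mu_{\digamma} := \omega + dd^{c}G$. The invariance $\frac{1}{d_{{\rm top}}}\digamma^{*}\mu_{\digamma}=\mu_{\digamma}$ follows from the functional equation $G = u + d_{{\rm top}}^{-1}\digamma^{*}G$ and the continuity of $\digamma^{*}$ on quasi-subharmonic functions.

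Next I pass from $\omega$ to a Dirac mass. For $z_{0}$, I write $\delta_{z_{0}} = \omega + dd^{c}g_{z_{0}}$, where $g_{z_{0}}(z) = \log \rho(z,z_{0}) + O(1)$. This gives
\[ d_{{\rm top}}^{-n}(\digamma^{n})^{*}\delta_{z_{0}} - d_{{\rm top}}^{-n}(\digamma^{n})^{*}\omega = dd^{c}\bigl(d_{{\rm top}}^{-n}(\digamma^{n})^{*}g_{z_{0}}\bigr). \]
So convergence to $\mu_{\digamma}$ is equivalent to $h_{n}:=d_{{\rm top}}^{-n}(\digamma^{n})^{*}g_{z_{0}}\to 0$ in $L^{1}$. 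These $h_{n}$ are quasi-subharmonic with uniformly bounded $dd^{c}$-mass. Their $L^{1}$ norms satisfy $\|h_{n}\|_{L^{1}} \lesssim (d_{{\rm fwd}}/d_{{\rm top}})^{n}\sup_{w}\|g_{w}\|_{L^{1}}$, and the supremum is finite by compactness. Hence $h_{n}\to 0$ in $L^{1}$ whenever the corresponding pushforward--pullback estimates hold. At first glance this seems to give convergence for every $z_{0}$. However, the $L^{1}$ bound above uses the adjoint action on test functions, not on the point mass, so for some $z_{0}$ the sequence $h_{n}$ could instead drift to $-\infty$.

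The main obstacle is this last step: identifying exactly the points where $h_{n}(\cdot)$ fails to converge, and showing they form a proper subset $\mathcal{E}$. My approach is to test against a smooth $\varphi$. Write
\[ \langle d_{{\rm top}}^{-n}(\digamma^{n})^{*}\delta_{z_{0}},\varphi\rangle = d_{{\rm top}}^{-n}(\digamma^{n})_{*}\varphi(z_{0}) =: \Lambda^{n}\varphi(z_{0}), \]
where $\Lambda=d_{{\rm top}}^{-1}\digamma_{*}$ is the normalised push-forward operator. Writing $dd^{c}\varphi = \theta^{+}-\theta^{-}$ and splitting $\Lambda^{n}\varphi = \langle\mu_{\digamma},\varphi\rangle + (\text{potential terms})$, the error is controlled by $d_{{\rm top}}^{-n}\, (\digamma^{n})_{*} G$-type quasi-subharmonic functions evaluated at $z_{0}$. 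These functions are uniformly bounded above. By Hartogs-type compactness, they converge to $0$ outside the set where their limit superior is $-\infty$. I would define $\mathcal{E}$ as this set: the points whose backward orbits accumulate too much multiplicity, so that the Lelong numbers of $d_{{\rm top}}^{-n}(\digamma^{n})^{*}\delta_{z_{0}}$ fail to tend to zero. I then need to show $\mathcal{E}$ is proper, indeed finite or polar, using Siu-type bounds on Lelong numbers and the fact that $\mathcal{E}$ is totally invariant. This is the delicate part, and it is where I expect to spend the most effort.
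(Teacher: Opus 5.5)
The paper does not prove this statement. It quotes it from Dinh--Sibony and uses it as a black box, so your proposal has to stand on its own.

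\textbf{Construction of $\mu_{\digamma}$.} Your construction and the invariance argument follow the standard pluripotential route and work, with one repair. The estimate $\|d_{{\rm top}}^{-k}(\digamma^{k})^{*}u\|_{L^{1}}\le (d_{{\rm fwd}}/d_{{\rm top}})^{k}\|u\|_{L^{1}}$ is not justified. The relevant integral is $\int |u|\,\mathrm{d}(\digamma^{k})_{*}\omega$, and the push-forward measure $(\digamma^{k})_{*}\omega$ is not dominated by $d_{{\rm fwd}}^{k}\omega$.

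Use instead that $\frac{1}{d_{{\rm top}}}\digamma^{*}\omega$ has an $L^{p}$ density for some $p>1$, being the push-forward of a smooth form under a finite branched cover. Then $u$ is continuous and $\|d_{{\rm top}}^{-k}(\digamma^{k})^{*}u\|_{\infty}\le (d_{{\rm fwd}}/d_{{\rm top}})^{k}\|u\|_{\infty}$. Hence $G_{n}\to G$ uniformly, and $\mu_{\digamma}$ has a continuous potential, which you will need below.

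The same domination problem is why your bound on $\|h_{n}\|_{L^{1}}$ fails, as you suspected. For the correspondence $w=z^{2}$ (so $d_{{\rm top}}=2$, $d_{{\rm fwd}}=1$) and $z_{0}=0$, one has $d_{{\rm top}}^{-n}(\digamma^{n})^{*}\delta_{0}=\delta_{0}$ for every $n$.

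\textbf{The gap.} The real gap is the final step, which is the actual content of the theorem. Hartogs' lemma gives only the one-sided bound $\limsup_{n}\psi_{n}(z_{0})\le \lim \psi_{n}$. It never bounds the $\liminf$ from below, so it does not give convergence off the set where some $\limsup$ equals $-\infty$.

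Your fallback is to define $\mathcal{E}$ as the set where the atoms (Lelong numbers) of $d_{{\rm top}}^{-n}(\digamma^{n})^{*}\delta_{z_{0}}$ do not tend to zero. This is never linked to failure of convergence: you would need a quantitative estimate showing that small atoms force the error to vanish. Its properness is only announced. Even the total invariance you plan to use is unclear: from $d_{{\rm top}}^{-n-1}(\digamma^{n+1})^{*}\delta_{b}\ge d_{{\rm top}}^{-1}\,d_{{\rm top}}^{-n}(\digamma^{n})^{*}\delta_{a}$ for $b\in\digamma(a)$ one only gets $\digamma(\mathcal{E})\subseteq\mathcal{E}$. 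Also, the error terms do not come from $G$; they come from push-forwards of $\omega$.

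\textbf{What closes the argument.} No Lelong-number or Siu-type input is needed.
\begin{enumerate}
\item Put $\delta=d_{{\rm fwd}}/d_{{\rm top}}<1$, keep your $\Lambda=d_{{\rm top}}^{-1}\digamma_{*}$, and scale a smooth $\varphi$ so that $dd^{c}(\pm\varphi)\ge-\omega$.
\item The measure $d_{{\rm top}}^{-n}(\digamma^{n})_{*}\omega$ has mass $\delta^{n}$. Write it as $\delta^{n}(\omega+dd^{c}v_{n})$ with $v_{n}$ quasi-subharmonic and $\max v_{n}=0$.
\item Since $dd^{c}\Lambda^{n}(\pm\varphi)=d_{{\rm top}}^{-n}(\digamma^{n})_{*}dd^{c}(\pm\varphi)\ge -d_{{\rm top}}^{-n}(\digamma^{n})_{*}\omega$, the functions $\Lambda^{n}(\pm\varphi)+\delta^{n}v_{n}$ are $\delta^{n}\omega$-subharmonic. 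They are bounded above with uniformly bounded $L^{1}$ norms.
\item Their $L^{1}$ cluster values are subharmonic on $\widehat{\mathbb{C}}$, hence constant. The constant is $\pm\langle\mu_{\digamma},\varphi\rangle$, because $\int\Lambda^{n}\varphi\,\mathrm{d}\mu_{\digamma}=\langle\mu_{\digamma},\varphi\rangle$ and $\mu_{\digamma}$ has a continuous potential.
\item Hartogs' lemma for both signs gives $|\Lambda^{n}\varphi(z_{0})-\langle\mu_{\digamma},\varphi\rangle|\le -\delta^{n}v_{n}(z_{0})+o(1)$.
\item Since $\|v_{n}\|_{L^{1}}$ is uniformly bounded, $W=\sum_{n}\delta^{n}v_{n}$ is a decreasing limit of quasi-subharmonic functions with bounded integrals. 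So $W$ is quasi-subharmonic and not identically $-\infty$.
\item Wherever $W(z_{0})>-\infty$, the nonpositive terms $\delta^{n}v_{n}(z_{0})$ tend to $0$. Hence $\mathcal{E}\subseteq\{W=-\infty\}$, a polar set, which is more than the properness the statement asks for.
\end{enumerate}
The geometric factor $\delta^{n}$, which makes the error terms summable, is the second place where $d_{{\rm top}}>d_{{\rm fwd}}$ is used. It is exactly what your sketch is missing.
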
 

The authors, in \cite{bs:2016}, proved an equidistribution result for holomorphic correspondences satisfying $d_{{\rm top}} \le d_{{\rm fwd}}$, by considering a different hypothesis. 

\begin{theorem} \cite{bs:2016}
\label{thm:bsequi}
Let $\digamma$ be a holomorphic correspondence on $\widehat{\mathbb{C}}$ with $d_{{\rm top}} \le d_{{\rm fwd}}$. Assume that there exists a strong repeller, denoted by $\mathcal{R}$ that is disjoint from the set of critical values of $\digamma$. Then, there exists an open set $U \left( \digamma, \mathcal{R} \right) \supset \mathcal{R}$ such that  
\[ \frac{1}{d_{{\rm top}}^{n}} \left( \digamma^{n} \right)^{*} \delta_{z_{0}}\ \ \rightharpoonup\ \ \mu_{\digamma},\ \ \ \ \text{in the weak* topology, for all}\ z_{0} \in U \left( \digamma, \mathcal{R} \right). \] 
Moreover, $\mu_{\digamma}$ satisfies $\dfrac{1}{d_{{\rm top}}} \digamma^{*} \mu_{\digamma} = \mu_{\digamma}$. 
\end{theorem}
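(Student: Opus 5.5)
The plan is to reduce the statement to a contraction argument for the backward dynamics near $\mathcal{R}$, working directly with the transfer operator $T\varphi(z) := d_{{\rm top}}^{-1} \sum_{w\, \in\, \digamma^{\dagger}(z)} \varphi(w)$, where preimages are counted with multiplicity. With this notation,
\[ \Big\langle \tfrac{1}{d_{{\rm top}}^{n}} (\digamma^{n})^{*} \delta_{z_{0}}, \varphi \Big\rangle = T^{n} \varphi (z_{0}). \]
I will use the defining property of a strong repeller in the form I expect it to take: there is an open neighbourhood $V \supset \mathcal{R}$ with $\digamma^{\dagger}(\overline{V}) \Subset V$, and every component of $V$ is hyperbolic.

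\textbf{Step 1 (local inverse branches).} Since $\mathcal{R}$ is disjoint from the critical values of $\digamma$, I will shrink $V$ so that $\overline{V}$ also avoids the critical values. Then every point $z \in V$ has exactly $d_{{\rm top}}$ distinct preimages. Near each $z$ these preimages are given by $d_{{\rm top}}$ univalent holomorphic branches $g_{1}, \dots, g_{d_{{\rm top}}}$, defined on a small disc around $z$. By the choice of $V$, all of these branches take values in the compact set $K := \overline{\digamma^{\dagger}(\overline{V})} \subset V$. Iterating, $T^{n}\varphi(z)$ is the average of $\varphi$ over the $d_{{\rm top}}^{n}$ points $g_{i_{1}} \circ \cdots \circ g_{i_{n}}(z)$. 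Monodromy may prevent the branches from being defined globally on $V$, but this does not matter: all the estimates below are local and are made along chains of branches.

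\textbf{Step 2 (uniform contraction).} Let $\rho_{V}$ be the hyperbolic metric of $V$. Each branch is a holomorphic map from a disc in $V$ into the relatively compact set $K$. By the Schwarz--Pick lemma, applied after lifting to the universal cover of the relevant component of $V$, every branch contracts $\rho_{V}$-lengths by a factor $c < 1$ that is uniform over $\overline{V}$, precisely because the images lie in $K \Subset V$. Consequently, for $z, w$ in a compact subset $L \subset V$, the preimage points under $\digamma^{n}$ can be paired along corresponding branch chains (analytically continued along a path from $z$ to $w$) so that each pair is at $\rho_{V}$-distance at most $c^{n} \rho_{V}(z,w)$. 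This pairing and the uniformity of $c$ are the step I expect to be the main obstacle. The difficulty is making the continuation of all $d_{{\rm top}}^{n}$ chains simultaneous and consistent, without ever meeting a critical value. That should follow from the fact that every point reached by a chain lies in $K \subset V$, and $V$ avoids critical values.

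\textbf{Step 3 (convergence and identification of the limit).} Fix a Lipschitz test function $\varphi$. By Step 2, the oscillation of $T^{n}\varphi$ on $L$ is $O(c^{n})$. Moreover, $\sup_{K} T^{n+1}\varphi \le \sup_{K} T^{n}\varphi$ and $\inf_{K} T^{n+1}\varphi \ge \inf_{K} T^{n}\varphi$, because preimages of points of $K$ stay in $K$. Hence $T^{n}\varphi$ converges uniformly on compact subsets of $V$ to a constant $c(\varphi)$. The functional $\varphi \mapsto c(\varphi)$ is positive, linear and normalised, and Lipschitz functions are dense in $\mathcal{C}(\widehat{\mathbb{C}})$, so it extends to a probability measure $\mu_{\digamma}$. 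This gives the weak* convergence for every $z_{0} \in U(\digamma, \mathcal{R}) := V$. Finally, the invariance $\frac{1}{d_{{\rm top}}} \digamma^{*} \mu_{\digamma} = \mu_{\digamma}$ follows from $c(T\varphi) = \lim_{n} T^{n+1}\varphi = c(\varphi)$, together with the weak* continuity of $\digamma^{*}$ on measures. The hypothesis $d_{{\rm top}} \le d_{{\rm fwd}}$ enters only to place us outside the setting of Theorem~\ref{thm:ds}; the argument itself uses only the repeller and its avoidance of the critical values.
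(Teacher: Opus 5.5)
The paper itself gives no proof of this statement. It is quoted from \cite{bs:2016} purely as background, so there is no in-paper route to compare with, and your argument has to stand on its own. Its local part is sound. Over a critical-value-free trapping region, lifting to the universal cover of a component of $V$ gives $d_{{\rm top}}$ holomorphic maps into $K \Subset V$, Schwarz--Pick gives a uniform factor $c<1$, and path lifting pairs the $d_{{\rm top}}^{n}$ chains over $z$ with those over $w$. (The branches need not be univalent, since $\pi_{1}|_{\Gamma}$ may ramify, but nothing uses univalence.) The gap is the passage to a \emph{single} constant in Step 3. Your pairing needs a path in $V$ from $z$ to $w$, so it controls $T^{n}\varphi(z)-T^{n}\varphi(w)$ only when $z$ and $w$ lie in the same component of $V$; otherwise $\rho_{V}(z,w)=\infty$. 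The monotonicity of $\sup_{K}T^{n}\varphi$ and $\inf_{K}T^{n}\varphi$ forces the oscillation on $K$ to $0$ only if $K$ sits inside one component. Neither your assumed definition of a strong repeller nor anything else in the proposal guarantees that.

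This is essential, not cosmetic: under the definition you assume, the statement is false. Let $\Gamma=\{(z,w) : z=w^{3}\}$, so $\digamma^{\dagger}(w)=w^{3}$, $d_{{\rm top}}=1<3=d_{{\rm fwd}}$, and every point has exactly one preimage, so there are no critical values. The set $V=\{|z|<1/2\}\cup\{|z|>2\}$ satisfies $\digamma^{\dagger}(\overline{V})\Subset V$, its components are discs, and $\bigcap_{n}(\digamma^{\dagger})^{n}(\overline{V})=\{0,\infty\}=:\mathcal{R}$. Yet $(\digamma^{n})^{*}\delta_{z_{0}}=\delta_{z_{0}^{3^{n}}}$ tends to $\delta_{0}$ or to $\delta_{\infty}$ according to the component containing $z_{0}$, so no open $U\supset\mathcal{R}$ with a single limit $\mu_{\digamma}$ exists. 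With $w^{-3}$ in place of $w^{3}$ the sequence does not even converge.

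So you must extract from the actual definition of a strong repeller in \cite{bs:2016} a connectedness property and use it explicitly. This could be a connected trapping region, or at least irreducibility and aperiodicity of the way $\digamma^{\dagger}$ moves the components. The property must also survive your Step 1. The natural shrinking is $V\mapsto(\digamma^{\dagger})^{n}(V)$: it is open by Theorem~\ref{thm:open} applied to the transpose, still trapping, and eventually free of critical values when $\mathcal{R}=\bigcap_{n}(\digamma^{\dagger})^{n}(\overline{V})$. But it can disconnect $V$, because images of connected sets under $\digamma^{\dagger}$ need not be connected; for example, $f^{-1}(D)$ for a disc $D$ avoiding the critical value of a quadratic polynomial $f$ is two discs.

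If the connected region you are given contains critical values, do the pairing there. Use paths that avoid the finite set $\bigcup_{k<n_{0}}\digamma^{k}(C)$, where $C$ is the set of critical values, so that all chains continue uniquely for $n_{0}$ steps. Apply the uniform contraction only from step $n_{0}$ on, inside the critical-value-free region $(\digamma^{\dagger})^{n_{0}}(V)$. You also need a bound on the lengths of the lifted paths that is uniform for $z,w\in K$.
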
 

Henceforth, we work with holomorphic correspondences $\digamma$ for which the measure $\mu_{\digamma}$, called the \emph{Dinh-Sibony measure}, as in the Theorem \ref{thm:ds} exists. We denote by $\Omega_{{\rm DS}} \subseteq \widehat{\mathbb{C}}$, the support of the Dinh-Sibony measure of the correspondence $\digamma$. $\Omega_{{\rm DS}}$ is a compact metric space, when the metric $\rho$ is restricted on $\Omega_{{\rm DS}}$. It is known that (refer \cite{lon:2022}) $\Omega_{{\rm DS}}$ is backward invariant with respect to $\digamma$ but is not forward invariant, in general. Our goal is to study the dynamics of $\digamma$ on the space $\Omega_{{\rm DS}}$ and obtain results analogous to the case of dynamics of rational maps on the Julia set with regards to thermodynamic formalism. However, the absence of forward invariance of $\Omega_{{\rm DS}}$ with respect to $\digamma$ poses a significant challenge, which is overcome, thanks to the following result proved by Londhe. 

\begin{theorem} \cite{lon:2022} 
Let $\digamma$ be a holomorphic correspondence on $\widehat{\mathbb{C}}$ with $d_{{\rm top}} > d_{{\rm fwd}}$ and $\mu_{\digamma}$ be its Dinh-Sibony measure, whose support is denoted by $\Omega_{{\rm DS}}$. If $z_{0} \in \Omega_{{\rm DS}}$, then for all $n \in \mathbb{Z}_{+}$, we have $\digamma^{n} (z_{0}) \cap \Omega_{{\rm DS}} \ne \emptyset$. 
\end{theorem}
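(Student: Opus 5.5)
The plan is to prove the case $n = 1$ directly from the invariance relation $\frac{1}{d_{{\rm top}}} \digamma^{*} \mu_{\digamma} = \mu_{\digamma}$ in Theorem \ref{thm:ds}, and then to get general $n$ by induction. For the induction: if $z_{0} \in \Omega_{{\rm DS}}$, the case $n = 1$ gives some $z_{1} \in \digamma(z_{0}) \cap \Omega_{{\rm DS}}$. Applying it again to $z_{1}$ gives $z_{2} \in \digamma(z_{1}) \cap \Omega_{{\rm DS}}$, and so on. This produces a finite orbit $(z_{0}, z_{1}, \dots, z_{n})$ with $z_{n} \in \digamma^{n}(z_{0}) \cap \Omega_{{\rm DS}}$. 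This route avoids having to identify the Dinh-Sibony measure of $\digamma^{n}$ with that of $\digamma$.

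For $n = 1$ I argue by contradiction. Suppose $z_{0} \in \Omega_{{\rm DS}}$ but $\digamma(z_{0}) \cap \Omega_{{\rm DS}} = \emptyset$.

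\emph{Step 1: a neighbourhood of $\digamma(z_{0})$ missing the support.} The set $\digamma(z_{0})$ is finite by condition 2 of Definition \ref{defn:holocorr}, and $\Omega_{{\rm DS}}$ is closed. So there is an open set $V \supset \digamma(z_{0})$ with $\overline{V} \cap \Omega_{{\rm DS}} = \emptyset$.

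\emph{Step 2: upper semicontinuity.} Each $\Gamma_{j}$ is a closed subvariety of the compact space $\widehat{\mathbb{C}} \times \widehat{\mathbb{C}}$. Hence the graph of $\digamma$ is compact, and $z \mapsto \digamma(z)$ is upper semicontinuous. Concretely, if $z_{k} \to z_{0}$ and $w_{k} \in \digamma(z_{k}) \setminus V$, a subsequence of $(z_{k}, w_{k})$ converges to some $(z_{0}, w)$ in the graph with $w \notin V$. This contradicts $\digamma(z_{0}) \subset V$. Therefore there is an open neighbourhood $U$ of $z_{0}$ with $\digamma(U) \subset V$.

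\emph{Step 3: the support has measure zero near $z_{0}$.} Write the pullback as $\digamma^{*}\mu_{\digamma} = \int \sum_{x \in \digamma^{\dagger}(z)} \delta_{x} \, \mathrm{d}\mu_{\digamma}(z)$, with preimages counted with multiplicity. Invariance then gives
\[ \mu_{\digamma}(U) \;=\; \frac{1}{d_{{\rm top}}} \int \# \left( \digamma^{\dagger}(z) \cap U \right) \mathrm{d}\mu_{\digamma}(z). \]
If $x \in \digamma^{\dagger}(z) \cap U$, then $z \in \digamma(x) \subset V$, so $z \notin \Omega_{{\rm DS}}$. Thus the integrand vanishes on $\Omega_{{\rm DS}}$, which carries full $\mu_{\digamma}$-measure, and so $\mu_{\digamma}(U) = 0$. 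This contradicts $z_{0} \in \Omega_{{\rm DS}} = {\rm supp}\, \mu_{\digamma}$.

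I expect the main obstacle to be Step 3. The difficulty is justifying the formula for $\digamma^{*}\mu_{\digamma}$ as an integral of pulled-back Dirac masses with multiplicities, including the measurability of $z \mapsto \#(\digamma^{\dagger}(z) \cap U)$. I would handle this by testing against continuous functions: $\langle \digamma^{*}\mu, \varphi \rangle = \int \sum_{x \in \digamma^{\dagger}(z)} \varphi(x) \, \mathrm{d}\mu(z)$, where the inner sum is a continuous function of $z$. I would then approximate $\mathbb{1}_{U}$ from below by continuous functions supported in $U$ and apply monotone convergence. The hypothesis $d_{{\rm top}} > d_{{\rm fwd}}$ is used only to guarantee, via Theorem \ref{thm:ds}, that $\mu_{\digamma}$ exists with this invariance.
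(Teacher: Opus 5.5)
Your proof is correct. The paper gives no argument for this statement; it is quoted from \cite{lon:2022}. So your proposal is a self-contained justification rather than an alternative to a proof in the text.

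In the case $n = 1$ you establish exactly the inclusion
\[ \Omega_{{\rm DS}} \;=\; {\rm supp} \, ( \tfrac{1}{d_{{\rm top}}} \digamma^{*} \mu_{\digamma} ) \;\subseteq\; \digamma^{\dagger} ( \Omega_{{\rm DS}} ). \]
This is the easy half of the identity ${\rm supp} (\digamma^{*} \mu) = \digamma^{\dagger} ({\rm supp}\, \mu)$. The reverse inclusion is the backward invariance of $\Omega_{{\rm DS}}$, which the paper also quotes from \cite{lon:2022}; its natural proof uses the openness of $\digamma$ (Theorem \ref{thm:open}). You rightly do not need it. Your induction is also sound, since as a set $\digamma^{n}(z_{0})$ is precisely the set of endpoints of paths $z_{0}, z_{1}, \dots, z_{n}$ with $z_{r} \in \digamma(z_{r-1})$.

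Two simplifications are available. First, Steps 1 and 2 can be replaced by one observation. The set $\digamma^{\dagger}(\Omega_{{\rm DS}}) = \pi_{1} ( \pi_{2}^{-1} (\Omega_{{\rm DS}}) \cap |\Gamma| )$ is compact, so a point $z_{0}$ outside it has a neighbourhood $U$ disjoint from it. Second, the obstacle you anticipate in Step 3 disappears if you test the invariance against a single continuous $\varphi \ge 0$ supported in $U$ with $\varphi (z_{0}) > 0$. Then $\int \varphi \, \mathrm{d}\mu_{\digamma} > 0$ because $z_{0} \in {\rm supp}\, \mu_{\digamma}$. On the other hand, $\sum_{x \in \digamma^{\dagger}(z)} \varphi(x) = 0$ for every $z \in \Omega_{{\rm DS}}$. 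So you need neither the approximation of $\mathbb{1}_{U}$ nor measurability of the counting function.

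Your route also buys something the quoted statement does not. It uses nothing but the relation $\frac{1}{d_{{\rm top}}} \digamma^{*} \mu_{\digamma} = \mu_{\digamma}$, which Theorem \ref{thm:bsequi} also provides. Hence it gives the same conclusion for the measures arising when $d_{{\rm top}} \le d_{{\rm fwd}}$. The paper mentions that setting immediately before restricting the dynamics to $\Omega_{{\rm DS}}$, but the theorem as quoted, stated under $d_{{\rm top}} > d_{{\rm fwd}}$, does not literally cover it.
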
 

Motivated by the above theorem, it is possible to consider the dynamics of a holomorphic correspondence $\Gamma$ restricted on $\Omega_{{\rm DS}}$, {\it i.e.}, we consider only those points in $\Gamma$ that are also in $\Omega_{{\rm DS}} \times \Omega_{{\rm DS}}$. For convenience, we abuse notations and continue to denote the correspondence on $\Omega_{{\rm DS}}$ as $\Gamma$ and its corresponding set valued map as $\digamma$. The situation shall make it clear whether we work with the domain $\widehat{\mathbb{C}}$ or $\Omega_{{\rm DS}}$. Further, we define the orbit space for the correspondence $\Gamma$ on $\Omega_{{\rm DS}}$ to be $\mathscr{P}^{\Gamma}_{{\rm inv}} \left( \Omega_{{\rm DS}} \right) := \mathscr{P}^{\Gamma} \left( \widehat{\mathbb{C}} \right) \cap \left( \Omega_{{\rm DS}}^{\mathbb{Z}_{+}} \times \big\{ 1, 2, \cdots, N \big\}^{\mathbb{Z}_{+}} \right)$. Restricting the metric $d$ defined earlier to the space $\mathscr{P}^{\Gamma}_{{\rm inv}} \left( \Omega_{{\rm DS}} \right)$, we find that it is a compact metric space that is completely invariant under the shift map $\sigma^{\Gamma}$. One can similarly define the $n$ long permissible forward paths with respect to $\Gamma$ that entirely lie inside $\Omega_{{\rm DS}}$ to be the set $\left( \mathscr{P}_{n}^{\Gamma} \right)_{{\rm inv}} \left( \Omega_{{\rm DS}} \right) := \mathscr{P}_{n}^{\Gamma} \left( \widehat{\mathbb{C}} \right) \cap \left( \Omega_{{\rm DS}}^{n + 1} \times \big\{ 1, 2, \cdots, N \big\}^{n} \right)$.

\begin{remark}
It must be noted that equidistribution results along the lines of Theorem \ref{thm:ds} and Theorem \ref{thm:bsequi} have been proved for various families of correspondences, by now. Some works in this regard include \cite{dkw:2020, hem:2024, vd:2024}. It is possible to consider the set $\Omega_{{\rm DS}}$ and conduct investigations in those settings as well.
\end{remark}

\section{Thermodynamic formalism} 
\label{sec:three}

In this section, we elaborate on the results in the framework of thermodynamic formalism for holomorphic correspondences that we require in our work. The works \cite{ss:2025, ss:2026} form the basis of the concepts mentioned in this section. We work with the notions of entropy and pressure, related to holomorphic correspondences, which were studied in these two works. This is not the only way to study thermodynamic formalism in the setting of correspondences, as one can also refer to the methods in \cite{llz:2023}. We also bring to the readers' attention a few other works in the same spirit. The notion of topological entropy has been studied in the case of holomorphic correspondences in \cite{ds:2008} and \cite{bs:2021}. Another notion of entropy that is studied in the case of set valued maps is metric entropy, which was studied in detail in \cite{vs:2022}, a work that also proves a half variational principle. The recent work \cite{vd:2026} makes use of this metric entropy to study measures of maximal entropy for certain families of holomorphic correspondences.

For a compact metric space $X$, we denote the space of all Borel probability measures supported on $X$ by $\mathcal{M} (X)$. This space is compact in the weak* topology. Recall the projection map $\Pi_{0} : \mathscr{P}^{\Gamma}_{{\rm inv}} \left( \Omega_{{\rm DS}} \right) \to \Omega_{{\rm DS}}$, defined in Section \ref{sec:two}. This map induces a push forward between the corresponding space of measures given by $\left( \Pi_{0} \right)_{*} : \mathcal{M} \left( \mathscr{P}^{\Gamma}_{{\rm inv}} \left( \Omega_{{\rm DS}} \right) \right) \longrightarrow \mathcal{M} \left( \Omega_{{\rm DS}} \right)$ defined by $\left( \left( \Pi_{0} \right)_{*} \mu \right) (A) = \mu \left( \Pi_{0}^{-1} (A) \right)$ for any $\mu \in \mathcal{M} \left( \mathscr{P}^{\Gamma}_{{\rm inv}} \left( \Omega_{{\rm DS}} \right) \right)$ and for any Borel subset $A$ of $\Omega_{{\rm DS}}$. We work with the space $\mathscr{S}^{\Gamma} \subseteq \mathcal{M} \left( \Omega_{{\rm DS}} \right)$ defined as $\mathscr{S}^{\Gamma} := \left( \Pi_{0} \right)_{*} \left( \mathcal{M} \left( \mathscr{P}^{\Gamma}_{{\rm inv}} \left( \Omega_{{\rm DS}} \right), \sigma^{\Gamma} \right) \right)$, where $\mathcal{M} \left( \mathscr{P}^{\Gamma}_{{\rm inv}} \left( \Omega_{{\rm DS}} \right), \sigma^{\Gamma} \right)$ denotes the space of $\sigma^{\Gamma}$-invariant Borel probability measures on the orbit space $\mathscr{P}^{\Gamma}_{{\rm inv}} \left( \Omega_{{\rm DS}} \right)$. As $\left( \Pi_{0} \right)_{*}$ is continuous, the space $\mathscr{S}^{\Gamma}$ is compact. 

We now define the entropy of the holomorphic correspondence with respect to a measure $\nu \in \mathscr{S}^{\Gamma}$. First, recall from \cite{wal:1982} that the entropy of a finite measurable partition, say $\mathcal{P} = \left\{ P_{1}, P_{2}, \cdots, P_{s} \right\}$ of $\mathscr{P}^{\Gamma}_{{\rm inv}} \left( \Omega_{{\rm DS}} \right)$ with respect to a Borel probability measure $\mu$ supported on $\mathscr{P}^{\Gamma}_{{\rm inv}} \left( \Omega_{{\rm DS}} \right)$ is given by $H_{\mu} \left( \mathcal{P} \right) = - \sum\limits_{j\, =\, 1}^{s} \mu \left( P_{j} \right) \log \mu \left( P_{j} \right)$. Now, let $\nu \in \mathscr{S}^{\Gamma}$ and let $\mathcal{Q} = \left\{ Q_{1}, Q_{2}, \cdots, Q_{s} \right\}$ be a finite $\nu$-measurable partition of $\Omega_{{\rm DS}}$. Let $\mu \in \mathcal{M} \left( \mathscr{P}^{\Gamma}_{{\rm inv}} \left( \Omega_{{\rm DS}} \right), \sigma^{\Gamma} \right)$ be such that $\left( \Pi_{0} \right)_{*} \mu = \nu$. For $k \ge 0$ and $k \ge 1$, denote by $\mathcal{R}_{k}$ and $\mathcal{S}_{k}$ the finite $\mu$-measurable partitions of $\mathscr{P}^{\Gamma}_{{\rm inv}} \left( \Omega_{{\rm DS}} \right)$ given by $\left\{ \Pi_{k}^{-1} \left( Q_{1} \right), \Pi_{k}^{-1} \left( Q_{2} \right), \cdots, \Pi_{k}^{-1} \left( Q_{s} \right) \right\}$ and $\left\{ {\rm Proj}_{k}^{-1} \left( \{1\} \right), {\rm Proj}_{k}^{-1} \left( \{2\} \right), \cdots, {\rm Proj}_{k}^{-1} \left( \{N\} \right) \right\}$ respectively, where $N$ is the number of irreducible subvarieties in the representation of the correspondence $\Gamma$, as written in Definition \ref{defn:holocorr}. Define for $k \ge 0,\ \mathcal{Q}_{k}^{\Gamma} := \mathcal{R}_{k} \vee \mathcal{S}_{k + 1}$, the join of $\mathcal{R}_{k}$ and $\mathcal{S}_{k + 1}$, which is again a finite $\mu$-measurable partition of $\mathscr{P}^{\Gamma}_{{\rm inv}} \left( \Omega_{{\rm DS}} \right)$. 

We define the entropy of the correspondence $\Gamma$ with respect to the pair $\left( \nu, \mu \right)$ and the partition $\mathcal{Q}$ to be 
\[ H_{\left( \nu, \mu \right)} \left( \mathcal{Q}, \Gamma \right)\ \ :=\ \ \lim\limits_{n\, \to\, \infty} \frac{1}{n} H_{\mu} \left( \bigvee_{p\, =\, 0}^{n - 1} \mathcal{Q}_{p}^{\Gamma} \right). \]
The intermediate entropy of $\Gamma$ with respect to the pair $\left( \nu, \mu \right)$ is defined as 
\[ h_{\left( \nu, \mu \right)} (\Gamma)\ \ =\ \ \sup\limits_{\mathcal{Q}} H_{\left( \nu, \mu \right)} \left( \mathcal{Q}, \Gamma \right), \] 
where the supremum is considered over all finite $\nu$-measurable partitions of $\Omega_{{\rm DS}}$. 

\begin{definition}
\label{defn:entropycorr}
Let $\Gamma$ be a holomorphic correspondence on $\widehat{\mathbb{C}}$ with $\Omega_{{\rm DS}}$ being the support of the Dinh-Sibony measure of $\Gamma$. Let $\nu \in \mathscr{S}^{\Gamma}$. Then the measure theoretic entropy of $\Gamma$ with respect to $\nu$ is defined as 
\[ h_{\nu} (\Gamma)\ \ =\ \ \sup \left\{ h_{\left( \nu, \mu \right)} (\Gamma)\; :\; \mu \in  \mathcal{M} \left( \mathscr{P}^{\Gamma}_{{\rm inv}} \left( \Omega_{{\rm DS}} \right), \sigma^{\Gamma} \right)\ \text{such that}\ \nu = \left( \Pi_{0} \right)_{*} \mu \right\}. \] 
\end{definition}

We now state a result from \cite{ss:2026} that shall come in handy, in the sequel. Note that the authors work with the spaces $\widehat{\mathbb{C}}$ and $\mathscr{P}^{\Gamma} \left( \widehat{\mathbb{C}} \right)$ in that paper. We state those results with the correspondence being restricted to $\Omega_{{\rm DS}}$ and hence work with the orbit space $\mathscr{P}^{\Gamma}_{{\rm inv}} \left( \Omega_{{\rm DS}} \right)$. It must be noted that the proofs of the following holds {\it mutatis mutandis} the methods mentioned in the proofs of the corresponding results in \cite{ss:2026}. This idea also holds for other results from \cite{ss:2026} that we shall state in this paper.

\begin{theorem} \cite{ss:2026}
\label{thm:entropyconnect}
Let $\Gamma$ be a holomorphic correspondence on $\widehat{\mathbb{C}}$ with $\Omega_{{\rm DS}}$ being the support of the Dinh-Sibony measure of $\Gamma$. Consider the space $\mathscr{P}^{\Gamma}_{{\rm inv}} \left( \Omega_{{\rm DS}} \right)$ of infinitely long forward orbits of points in $\Omega_{{\rm DS}}$ which lie entirely in $\Omega_{{\rm DS}}$. Let $\sigma^{\Gamma}$ denote the left shift map on this space. Then for all pairs of measures $\left( \nu, \mu \right)$ with $\nu \in \mathscr{S}^{\Gamma}$ and $\mu \in \mathcal{M} \left( \mathscr{P}^{\Gamma}_{{\rm inv}} \left( \Omega_{{\rm DS}} \right), \sigma^{\Gamma} \right)$ satisfying $\left( \Pi_{0} \right)_{*} \mu = \nu$, we have $h_{\left( \nu, \mu \right)} (\Gamma) = h_{\mu} \left( \sigma^{\Gamma} \right)$, where $h_{\mu} \left( \sigma^{\Gamma} \right)$ denotes the measure theoretic entropy of the map $\sigma^{\Gamma}$ with respect to the measure $\mu$. 
\end{theorem}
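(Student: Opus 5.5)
The natural route is to show that the two quantities are built from the same increasing family of partitions of $\mathscr{P}^{\Gamma}_{{\rm inv}} \left( \Omega_{{\rm DS}} \right)$. The key is that the partitions $\mathcal{Q}_{p}^{\Gamma}$ are shifts of $\mathcal{Q}_{0}^{\Gamma}$. Since $\Pi_{p} = \Pi_{0} \circ (\sigma^{\Gamma})^{p}$ and ${\rm Proj}_{p+1} = {\rm Proj}_{1} \circ (\sigma^{\Gamma})^{p}$, we have
\[ \mathcal{Q}_{p}^{\Gamma} = \mathcal{R}_{p} \vee \mathcal{S}_{p+1} = (\sigma^{\Gamma})^{-p} \left( \mathcal{R}_{0} \vee \mathcal{S}_{1} \right) = (\sigma^{\Gamma})^{-p} \mathcal{Q}_{0}^{\Gamma}. \]
Hence $\bigvee_{p=0}^{n-1} \mathcal{Q}_{p}^{\Gamma} = \bigvee_{p=0}^{n-1} (\sigma^{\Gamma})^{-p} \mathcal{Q}_{0}^{\Gamma}$. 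Because $\mu$ is $\sigma^{\Gamma}$-invariant, the defining limit exists and equals the Kolmogorov--Sinai quantity $h_{\mu}(\sigma^{\Gamma}, \mathcal{Q}_{0}^{\Gamma})$. This immediately gives
\[ h_{(\nu,\mu)}(\Gamma) = \sup_{\mathcal{Q}} h_{\mu}(\sigma^{\Gamma}, \mathcal{Q}_{0}^{\Gamma}) \le h_{\mu}(\sigma^{\Gamma}). \]

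For the reverse inequality, I would use a generating argument. Choose finite Borel partitions $\mathcal{Q}^{(m)}$ of $\Omega_{{\rm DS}}$, refining each other, with mesh tending to $0$; for instance, intersect $\Omega_{{\rm DS}}$ with dyadic-type squares in a chart. For each $m$ and each $k$, the partition $\bigvee_{p=0}^{k} (\sigma^{\Gamma})^{-p} \mathcal{Q}_{0}^{(m),\Gamma}$ is generated by the first $k+1$ point-coordinates, each with diameter less than ${\rm mesh}(\mathcal{Q}^{(m)})$, together with the first $k+1$ variety labels. In the metric $d$, its elements therefore have diameter at most
\[ \max\left\{ 2\,{\rm mesh}(\mathcal{Q}^{(m)}) + 2^{-k},\ 2^{-(k+1)} \right\} + (\text{tail}). \]
Letting $k$ and $m$ grow, these partitions separate points and generate the Borel $\sigma$-algebra of $\mathscr{P}^{\Gamma}_{{\rm inv}} \left( \Omega_{{\rm DS}} \right)$.

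The increasing sequence of partitions $\mathcal{Q}_{0}^{(m),\Gamma}$ then has joins over all shifts and all $m$ that generate the $\sigma$-algebra. By the standard result (Walters, Theorem 4.22 style), $h_{\mu}(\sigma^{\Gamma}) = \lim_{m} h_{\mu}(\sigma^{\Gamma}, \mathcal{Q}_{0}^{(m),\Gamma})$. Alternatively, one can use continuity of entropy in partitions under refinement, together with $h_{\mu}(\sigma, \bigvee_{0}^{k} \sigma^{-p} \mathcal{P}) = h_{\mu}(\sigma, \mathcal{P})$. Either way, this yields $h_{\mu}(\sigma^{\Gamma}) \le \sup_{\mathcal{Q}} H_{(\nu,\mu)}(\mathcal{Q}, \Gamma)$.

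One technical point is that $\mathcal{Q}$ ranges over $\nu$-measurable partitions. Borel partitions suffice for the supremum, and $\Pi_{p}^{-1}$ of a $\nu$-null set is $\mu$-null because $(\Pi_{p})_{*}\mu = \nu$ by invariance. Measurability is therefore harmless.

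The main obstacle is this generating step. The subtlety is that the point-coordinates alone need not determine the label sequence $\boldsymbol{\alpha}$ (distinct varieties may share points), and the metric $d$ has a separate label component. This is exactly why the labels $\mathcal{S}_{p+1}$ are built into $\mathcal{Q}_{p}^{\Gamma}$, and I would check carefully that the combined partitions shrink in $d$-diameter uniformly.
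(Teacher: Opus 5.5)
The paper gives no proof of this statement. It quotes the result from \cite{ss:2026} and only remarks that the argument there carries over \emph{mutatis mutandis} to $\mathscr{P}^{\Gamma}_{{\rm inv}} \left( \Omega_{{\rm DS}} \right)$, so there is nothing in the paper to compare your route against. Your proposal is correct and self-contained. It also makes that ``mutatis mutandis'' remark concrete, because it uses only:
the identities $\Pi_{p} = \Pi_{0} \circ (\sigma^{\Gamma})^{p}$ and ${\rm Proj}_{p+1} = {\rm Proj}_{1} \circ (\sigma^{\Gamma})^{p}$;
the $\sigma^{\Gamma}$-invariance of $\mu$, which also gives $(\Pi_{p})_{*}\mu = \nu$;
and compactness of $\Omega_{{\rm DS}}$.
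Nothing specific to the Dinh--Sibony support enters.

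Two small tidy-ups for the reverse inequality.

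First, the increasing-partitions theorem you invoke requires the finite partitions themselves to generate the Borel $\sigma$-algebra. The partitions $\mathcal{Q}_{0}^{(m),\Gamma}$ alone only see the pair $(z_{0}, \alpha_{1})$, so apply the theorem instead to $\mathcal{A}_{m} := \bigvee_{p=0}^{m} (\sigma^{\Gamma})^{-p} \mathcal{Q}_{0}^{(m),\Gamma}$. These are increasing in $m$ and their atoms have $d$-diameter tending to $0$. Then use $h_{\mu}(\sigma^{\Gamma}, \mathcal{A}_{m}) = h_{\mu}(\sigma^{\Gamma}, \mathcal{Q}_{0}^{(m),\Gamma})$. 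This is exactly your ``alternatively'' version, and it is the one to write down.

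Second, the obstacle you flag at the end is already resolved by your own estimate. An atom of $\bigvee_{p=0}^{k} (\sigma^{\Gamma})^{-p} \mathcal{Q}_{0}^{(m),\Gamma}$ fixes $\alpha_{1}, \ldots, \alpha_{k+1}$, so the symbolic part of $d$ is at most $2^{-(k+2)}$. The point part is at most $2\,{\rm mesh}(\mathcal{Q}^{(m)}) + 2^{-k}$; the $2^{-k}$ is already the tail, so no extra ``(tail)'' term is needed. This bound holds uniformly over all atoms, which is all the generating argument requires.
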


Now, we proceed to define the notion of pressure associated to holomorphic correspondences. For the remainder of this section, we work with a holomorphic correspondence $\Gamma$ on $\widehat{\mathbb{C}}$ whose Dinh-Sibony measure has its support denoted by $\Omega_{{\rm DS}}$. We mainly work with the set $\left( \mathscr{P}_{n}^{\Gamma} \right)_{{\rm inv}} \left( \Omega_{{\rm DS}} \right)$ of forward paths of length $n$ under the iterates of $\Gamma$ that lie entirely inside $\Omega_{{\rm DS}}$. 

\begin{definition}
Given $\epsilon > 0$ and $n \in \mathbb{Z}_{+}$, a set $\mathcal{F} \subseteq \left( \mathscr{P}_{n}^{\Gamma} \right)_{{\rm inv}} \left( \Omega_{{\rm DS}} \right)$ is said to be $(n, \epsilon)$-separated if for any distinct points $\mathfrak{X}^{+}_{n} \left( z_{0}; \boldsymbol{\alpha} \right)_{\boldsymbol{j}}, \mathfrak{X}^{+}_{n} \left( w_{0}; \boldsymbol{\beta} \right)_{\boldsymbol{k}} \in \mathcal{F}$, we have  
\begin{eqnarray*}
\text{either}\ \ & \rho \left( \Pi_{(r, n)} \big( \mathfrak{X}^{+}_{n} \left( z_{0}; \boldsymbol{\alpha} \right)_{\boldsymbol{j}} \big), \Pi_{(r, n)} \big( \mathfrak{X}^{+}_{n} \left( w_{0}; \boldsymbol{\beta} \right)_{\boldsymbol{k}} \big) \right) > \epsilon, & \text{for some}\ 0 \le r \le n, \\ 
\text{or}\ \ & {\rm Proj}_{(r, n)} \big( \mathfrak{X}^{+}_{n} \left( z_{0}; \boldsymbol{\alpha} \right)_{\boldsymbol{j}} \big) \ne  {\rm Proj}_{(r, n)} \big( \mathfrak{X}^{+}_{n} \left( w_{0}; \boldsymbol{\beta} \right)_{\boldsymbol{k}} \big), & \text{for some}\ 1 \le r \le n. 
\end{eqnarray*}
\end{definition}

For a continuous function $f \in \mathcal{C} \left( \Omega_{{\rm DS}}, \mathbb{R} \right)$ and for $\mathfrak{X}^{+}_{n} \left( z_{0}; \boldsymbol{\alpha} \right)_{\boldsymbol{j}} \in \left( \mathscr{P}_{n}^{\Gamma} \right)_{{\rm inv}} \left( \Omega_{{\rm DS}} \right)$, we denote by $\mathcal{Z}_{n} (f) \left( \mathfrak{X}^{+}_{n} \left( z_{0}; \boldsymbol{\alpha} \right)_{\boldsymbol{j}} \right)$, the sum given by $\displaystyle{\sum\limits_{j\, =\, 0}^{n - 1} f \left( \Pi_{(j, n)} \left( \mathfrak{X}^{+}_{n} \left( z_{0}; \boldsymbol{\alpha} \right)_{\boldsymbol{j}} \right) \right)}$. Now, for any $\epsilon > 0$, let 
\begin{equation}
\mathscr{R}_{n}^{\Gamma} (f, \epsilon)\ \ =\ \ \sup\limits_{\mathcal{F}} \left\{ \sum\limits_{\mathfrak{X}^{+}_{n} \left( z_{0}; \boldsymbol{\alpha} \right)_{\boldsymbol{j}}\, \in\, \mathcal{F}} e^{\mathcal{Z}_{n} (f) \left( \mathfrak{X}^{+}_{n} \left( z_{0}; \boldsymbol{\alpha} \right)_{\boldsymbol{j}} \right)} \right\},
\end{equation}
where the supremum runs over all $(n, \epsilon)$-separated subsets of $\left( \mathscr{P}_{n}^{\Gamma} \right)_{{\rm inv}} \left( \Omega_{{\rm DS}} \right)$. 

\begin{definition}
The topological pressure of the function $f \in \mathcal{C} \left( \Omega_{{\rm DS}}, \mathbb{R} \right)$ with respect to the holomorphic correspondence $\Gamma$, denoted by ${\rm Pr} (\Gamma, f)$ is given by 
\[ {\rm Pr} (\Gamma, f)\ \ =\ \ \lim\limits_{\epsilon\, \to\, 0} \left( \limsup\limits_{n\, \to\, \infty} \frac{1}{n} \log \mathscr{R}_{n}^{\Gamma} (f, \epsilon) \right). \] 
We further define the topological entropy of the correspondence to be $h_{{\rm top}} (\Gamma) = {\rm Pr} (\Gamma, 0)$.
\end{definition}

We now state a result from \cite{ss:2025} that gives a connection between the notions of topological pressure of real valued continuous functions, in the case of the holomorphic correspondence $\Gamma$ and that of the map $\sigma^{\Gamma}$.

\begin{theorem} \cite{ss:2025}
\label{thm:eqpres}
Let $\Gamma$ be a holomorphic correspondence on $\widehat{\mathbb{C}}$ with $\Omega_{{\rm DS}}$ being the support of the Dinh-Sibony measure of $\Gamma$. Then, for any $f \in \mathcal{C} \left( \Omega_{{\rm DS}}, \mathbb{R} \right)$, we have ${\rm Pr} (\Gamma, f) = {\rm Pr} \left( \sigma^{\Gamma}, f \circ \Pi_{0} \right)$, where the notation ${\rm Pr} \left( \sigma^{\Gamma}, f \circ \Pi_{0} \right)$ denotes the pressure of the continuous function $f \circ \Pi_{0}$ with respect to the map $\sigma^{\Gamma}$ defined on $\mathscr{P}^{\Gamma}_{{\rm inv}} \left( \Omega_{{\rm DS}} \right)$. 
\end{theorem}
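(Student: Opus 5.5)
The proof will compare both quantities with the pressure defined through $(n,\epsilon)$-separated sets of the compact dynamical system $\left( \mathscr{P}^{\Gamma}_{{\rm inv}} ( \Omega_{{\rm DS}} ), \sigma^{\Gamma} \right)$, for the potential $F := f \circ \Pi_{0}$. Write $d_{n}$ for the Bowen metric of $\sigma^{\Gamma}$ built from the metric $d$:
\[ d_{n}(x, y) = \max_{0 \le i \le n-1} d\left( (\sigma^{\Gamma})^{i} x, (\sigma^{\Gamma})^{i} y \right). \]
Let $\tau_{n} : \mathscr{P}^{\Gamma}_{{\rm inv}} ( \Omega_{{\rm DS}} ) \to ( \mathscr{P}_{n}^{\Gamma} )_{{\rm inv}} ( \Omega_{{\rm DS}} )$ be the truncation map, which keeps the first $n+1$ points and the first $n$ labels. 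The key observation is that the Birkhoff sum $S_{n}F(x) = \sum_{i=0}^{n-1} f(\Pi_{i} x)$ equals $\mathcal{Z}_{n}(f)(\tau_{n} x)$. Also, $\tau_{n}$ is surjective. Indeed, by Londhe's theorem every point of $\Omega_{{\rm DS}}$ has an image in $\Omega_{{\rm DS}}$, so every finite path inside $\Omega_{{\rm DS}}$ extends to an infinite one.

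For the inequality ${\rm Pr}(\Gamma, f) \le {\rm Pr}(\sigma^{\Gamma}, F)$, take an $(n,\epsilon)$-separated set $\mathcal{F}$ of finite paths and choose one infinite extension of each path. Suppose two finite paths differ at some coordinate $r \le n$, either by $\rho$-distance greater than $\epsilon$ or by a label. Shifting by $\min(r, n-1)$ places that discrepancy at position $0$ or $1$ of the shifted sequences. Hence the extensions are $(n, \epsilon')$-separated for $d_{n}$, with $\epsilon' = c\,\epsilon$ for an explicit constant $c$ determined by the weights $2^{-k}$ and the factor $\rho/(1+\rho)$. The weights $e^{S_{n}F}$ and $e^{\mathcal{Z}_{n}(f)}$ agree. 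Taking limits gives the inequality, and the shift of $n$ by one is harmless.

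For the reverse inequality, fix $\epsilon$ and choose $K$ with $2^{-K} < \epsilon/2$. If $x, y$ are $d_{n}$-separated by $\epsilon$, then their truncations $\tau_{n+K}(x)$ and $\tau_{n+K}(y)$ differ at some coordinate $\le n+K$, either in a label or by a $\rho$-distance of at least $\epsilon/4$. So $\tau_{n+K}$ sends an $(n,\epsilon)$-separated set injectively onto an $(n+K, \epsilon/4)$-separated set of finite paths. The extra $K$ terms change the weights by at most a factor $e^{K\|f\|_{\infty}}$, and this factor disappears after taking $\frac1n\log$ and letting $n \to \infty$. Finally, let $\epsilon \to 0$.

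The main obstacle is bookkeeping in this second step: making precise how $\epsilon$-separation in the weighted metric $d$ forces separation in finitely many coordinates with a controlled loss in $\epsilon$. The mixed max-structure of $d$ requires handling separately the case where separation comes from the label part.
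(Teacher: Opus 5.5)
The paper does not prove this statement itself. It quotes it from \cite{ss:2025}, remarking only that the arguments there go through \emph{mutatis mutandis} for $\Omega_{{\rm DS}}$ and $\mathscr{P}^{\Gamma}_{{\rm inv}}(\Omega_{{\rm DS}})$. So there is no in-text proof to compare with.

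Your argument is a complete, self-contained proof by the natural route. You extend finite paths to get ${\rm Pr}(\Gamma, f) \le {\rm Pr}(\sigma^{\Gamma}, F)$, and truncate at length $n+K(\epsilon)$ for the reverse inequality. Both estimates are correct.

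In the second step, make explicit why the factor $e^{K\|f\|_{\infty}}$ and the change from $n$ to $n+K$ cost nothing. The sequence $\frac{1}{m}\log \mathscr{R}^{\Gamma}_{m}(f, \epsilon/4)$ is bounded. It is bounded below by $-\|f\|_{\infty}$. It is bounded above by counting $N^{m}$ label words times $C_{\epsilon}^{m+1}$ cells of a finite cover of $\widehat{\mathbb{C}}$ by sets of diameter at most $\epsilon/4$. Hence multiplying by $\frac{n+K}{n}$ does not change the $\limsup$.

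Three small corrections:

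\begin{itemize}
\item For a discrepancy in a label $\alpha_{r}$ with $1 \le r \le n$, you must shift by $r-1$, not by $\min(r, n-1)$. The iterate $(\sigma^{\Gamma})^{r}$ deletes $\alpha_{1}, \dots, \alpha_{r}$, whereas $(\sigma^{\Gamma})^{r-1}$ puts the discrepancy at label position $1$, giving $d \ge 1/2$.
\item Separation of finite paths is defined with strict inequalities. Your estimate does give $\rho > \epsilon/4$ strictly, from $d > \epsilon$ and $\sum_{k \le K} 2^{-k} < 2$, so say so rather than ``at least''.
\item Surjectivity of $\tau_{n}$ needs only that every point of $\Omega_{{\rm DS}}$ has an image in $\Omega_{{\rm DS}}$. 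Londhe's theorem gives this when $d_{{\rm top}} > d_{{\rm fwd}}$. In general it follows from $\frac{1}{d_{{\rm top}}} \digamma^{*} \mu_{\digamma} = \mu_{\digamma}$: the support of $\digamma^{*}\mu_{\digamma}$ lies in $\digamma^{\dagger}(\Omega_{{\rm DS}})$, whence $\Omega_{{\rm DS}} \subseteq \digamma^{\dagger}(\Omega_{{\rm DS}})$.
\end{itemize}
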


The notions of entropy and pressure are related by the variational principle, which is now stated below. 

\begin{theorem} \cite{ss:2026}
Let $\Gamma$ be a holomorphic correspondence on $\widehat{\mathbb{C}}$ with $\Omega_{{\rm DS}}$ being the support of the Dinh-Sibony measure of $\Gamma$. Let $f \in \mathcal{C} \left( \Omega_{{\rm DS}}, \mathbb{R} \right)$. Then we have,
\begin{equation}
\label{eqn:varprin}
{\rm Pr} (\Gamma, f)\ \ =\ \ \sup\limits_{\nu\, \in\, \mathscr{S}^{\Gamma}} \left\{ h_{\nu} (\Gamma) + \int f\, \mathrm{d}\nu \right\}.
\end{equation}
In particular, we have $h_{{\rm top}} (\Gamma) = \sup \left\{h_{\nu} (\Gamma) : \nu \in \mathscr{S}^{\Gamma} \right\}$.
\end{theorem}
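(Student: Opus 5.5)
We derive the variational principle (\ref{eqn:varprin}) from the classical variational principle for the continuous map $\sigma^{\Gamma}$ on the compact metric space $\mathscr{P}^{\Gamma}_{{\rm inv}} \left( \Omega_{{\rm DS}} \right)$. This space is compact and completely invariant under $\sigma^{\Gamma}$, and $f \circ \Pi_{0}$ is continuous on it. Walters' variational principle \cite{wal:1975} therefore gives
\[ {\rm Pr} \left( \sigma^{\Gamma}, f \circ \Pi_{0} \right)\ =\ \sup\limits_{\mu} \left\{ h_{\mu} \left( \sigma^{\Gamma} \right) + \int f \circ \Pi_{0}\, \mathrm{d}\mu \right\}, \]
where the supremum runs over $\mathcal{M} \left( \mathscr{P}^{\Gamma}_{{\rm inv}} \left( \Omega_{{\rm DS}} \right), \sigma^{\Gamma} \right)$. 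By Theorem \ref{thm:eqpres}, the left-hand side equals ${\rm Pr} (\Gamma, f)$. It remains to identify the right-hand side with $\sup_{\nu \in \mathscr{S}^{\Gamma}} \{ h_{\nu}(\Gamma) + \int f\, \mathrm{d}\nu \}$.

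\textbf{Step 1: rewrite each term via $\nu = (\Pi_{0})_{*}\mu$.} For any invariant $\mu$, set $\nu = (\Pi_{0})_{*} \mu \in \mathscr{S}^{\Gamma}$. The change of variables formula gives $\int f \circ \Pi_{0}\, \mathrm{d}\mu = \int f\, \mathrm{d}\nu$. Theorem \ref{thm:entropyconnect} gives $h_{\mu}(\sigma^{\Gamma}) = h_{(\nu, \mu)}(\Gamma)$.

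\textbf{Step 2: bound the $\sigma^{\Gamma}$-side from above.} By Definition \ref{defn:entropycorr}, $h_{(\nu, \mu)}(\Gamma) \le h_{\nu}(\Gamma)$. Combined with Step 1, every term on the $\sigma^{\Gamma}$-side satisfies
\[ h_{\mu}(\sigma^{\Gamma}) + \int f \circ \Pi_{0}\, \mathrm{d}\mu\ \le\ h_{\nu}(\Gamma) + \int f\, \mathrm{d}\nu. \]
Hence ${\rm Pr}(\Gamma, f) \le \sup_{\nu} \{\cdots\}$.

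\textbf{Step 3: the reverse inequality.} Fix $\nu \in \mathscr{S}^{\Gamma}$ and $\epsilon > 0$. Since $h_{\nu}(\Gamma)$ is a supremum over the lifts $\mu$ of $\nu$ (a nonempty set by the definition of $\mathscr{S}^{\Gamma}$), we may choose a lift $\mu$ with $h_{(\nu,\mu)}(\Gamma) > h_{\nu}(\Gamma) - \epsilon$. This supremum may be $+\infty$; in that case we choose lifts with arbitrarily large entropy, and the same argument then forces ${\rm Pr}(\Gamma, f) = +\infty$, so the equality still holds. The integral term is the same for every lift, so
\[ h_{\nu}(\Gamma) + \int f\, \mathrm{d}\nu - \epsilon\ <\ h_{\mu}(\sigma^{\Gamma}) + \int f \circ \Pi_{0}\, \mathrm{d}\mu\ \le\ {\rm Pr}(\Gamma, f). \]
Letting $\epsilon \to 0$ and taking the supremum over $\nu$ gives the reverse inequality.

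\textbf{Step 4: the particular case.} Setting $f \equiv 0$ recovers $h_{{\rm top}}(\Gamma) = \sup_{\nu} h_{\nu}(\Gamma)$, since $h_{{\rm top}}(\Gamma) = {\rm Pr}(\Gamma, 0)$ by definition.

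The main obstacle lies not in this bookkeeping but in its inputs: Theorem \ref{thm:entropyconnect} and Theorem \ref{thm:eqpres} must transfer to the restricted orbit space $\mathscr{P}^{\Gamma}_{{\rm inv}} \left( \Omega_{{\rm DS}} \right)$. The key fact needed for this transfer is that the space is nonempty and invariant, which follows from Londhe's theorem that $\digamma^{n}(z_{0}) \cap \Omega_{{\rm DS}} \ne \emptyset$ for every $z_{0} \in \Omega_{{\rm DS}}$.
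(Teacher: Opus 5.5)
Your argument is correct and takes essentially the paper's approach: the paper does not reprove this statement but imports it from \cite{ss:2026}, adapted \emph{mutatis mutandis} to $\mathscr{P}^{\Gamma}_{{\rm inv}} (\Omega_{{\rm DS}})$, and your reduction (Walters' variational principle for $\sigma^{\Gamma}$, with Theorem \ref{thm:eqpres} for the pressure and Theorem \ref{thm:entropyconnect} plus the supremum-over-lifts definition of $h_{\nu}(\Gamma)$ for the entropy) is the same transfer mechanism the paper relies on, e.g.\ in the proof of Theorem \ref{thm:uniqueness}. Your two-sided comparison in Steps 2 and 3, including the treatment of the $+\infty$ case, is sound.
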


\begin{definition} 
A measure $\nu \in \mathscr{S}^{\Gamma}$, where the supremum in Equation \eqref{eqn:varprin} is attained is said to be an \emph{equilibrium state} for the function $f$. 
\end{definition} 

\section{Some topological results}
\label{sec:four} 

In this section, we focus on a specific class of holomorphic correspondences that we are interested to work with, in the upcoming sections with regards to the uniqueness of an equilibrium state and also prove some interesting properties of the same. First, we introduce some basic tools that shall be useful in the proof of Proposition \ref{prop:distexp}. For a holomorphic correspondence $\digamma$ on $\widehat{\mathbb{C}}$, let 
\[ \mathcal{O} (\digamma)\ \ :=\ \ \left\{ \left( z_{0}, z_{1}, z_{2}, \cdots \right)\ :\ z_{0} \in \widehat{\mathbb{C}},\ \text{with}\ z_{r} \in \digamma(z_{r - 1})\ \text{for all}\ r \ge 0 \right\}. \]
We define a metric $d_{\mathcal{O} (\digamma)}$ on the set $\mathcal{O} (\digamma)$ given by,
\[ d_{\mathcal{O} (\digamma)} \left( (z_{0}, z_{1}, z_{2}, \cdots), (w_{0}, w_{1}, w_{2}, \cdots) \right)\ \ =\ \ \sum\limits_{n\, \ge\, 0} \frac{1}{2^{n}} \frac{\rho \left( z_{n},\, w_{n} \right)}{\left( 1 + \rho \left( z_{n},\, w_{n} \right) \right)}. \] 

Define the left shift map $\sigma_{1} : \mathcal{O} (\digamma) \longrightarrow \mathcal{O} (\digamma)$ by $\sigma_{1} \left( ( z_{0}, z_{1}, z_{2}, z_{3}, \cdots ) \right) = ( z_{1}, z_{2}, z_{3}, \cdots )$. Now consider the one sided full shift space $\mathcal{S}^{\mathbb{Z}_{+}}$ on the symbol set $\mathcal{S} = \{ 1, 2, \cdots, N \}$, where $N$ is the number of varieties in the definition of the holomorphic correspondence $\Gamma$. We denote by $\sigma_{2}$ the shift map on the space $\mathcal{S}^{\mathbb{Z}_{+}}$. Let $\sigma := \sigma_{1} \times \sigma_{2}$ denote the map $\sigma : \mathcal{O} (\digamma) \times \mathcal{S}^{\mathbb{Z}_{+}} \longrightarrow \mathcal{O} (\digamma) \times \mathcal{S}^{\mathbb{Z}_{+}}$ given by $\sigma \left( \boldsymbol{z}, \boldsymbol{\alpha} \right) = \left( \sigma_{1} \left( \boldsymbol{z} \right), \sigma_{2} \left( \boldsymbol{\alpha} \right) \right)$. Under the usual product topology given to $\mathcal{O} (\digamma) \times \mathcal{S}^{\mathbb{Z}_{+}}$, the map $\sigma$ is continuous. Note that we have $\mathscr{P}^{\Gamma}_{{\rm inv}} \left( \Omega_{{\rm DS}} \right) \subseteq \mathscr{P}^{\Gamma} \left( \widehat{\mathbb{C}} \right) \subseteq \mathcal{O} (\digamma) \times \mathcal{S}^{\mathbb{Z}_{+}}$. Further, the metric that we have already defined on $\mathscr{P}^{\Gamma} \left( \widehat{\mathbb{C}} \right)$ generates the same topology as the one it obtains as a subspace of $\mathcal{O} (\digamma) \times \mathcal{S}^{\mathbb{Z}_{+}}$. Furthermore, the shift map associated to $\Gamma$, namely $\sigma^{\Gamma}$ on the space $\mathscr{P}^{\Gamma} \left( \widehat{\mathbb{C}} \right)$ ({\it resp.} $\mathscr{P}^{\Gamma}_{{\rm inv}} \left( \Omega_{{\rm DS}} \right)$) can be viewed as the restriction of the above map $\sigma$ to the space $\mathscr{P}^{\Gamma} \left( \widehat{\mathbb{C}} \right)$ ({\it resp.} $\mathscr{P}^{\Gamma}_{{\rm inv}} \left( \Omega_{{\rm DS}} \right)$). 

\subsection{Openness}

We now take a brief detour to mention a few ideas from complex geometry that we shall use in the proof of one of the main results in this section, namely, Theorem \ref{thm:open}. For Theorem \ref{thm:rem}, we use the presentation followed in \cite{lon:2022}, while the original result is actually from \cite{rr:1957}. We use the following notation in the statement of Theorem \ref{thm:rem}. 

Consider a holomorphic map $T : X_{1} \longrightarrow X_{2}$ where $X_{1}$ and $X_{2}$ are complex manifolds of dimension $k_{1}$ and $k_{2}$ respectively. Let $\mathcal{A} \subseteq X_{1}$ be a complex subvariety of pure dimension. Consider the restriction $T\vert_{\mathcal{A}}$ of the function $T$ to the subvariety $\mathcal{A}$. It is known that for any $z \in \mathcal{A}$, we have $\left( T\vert_{\mathcal{A}} \right)^{-1} T\vert_{\mathcal{A}} (z) := T^{-1} (T(z)) \cap \mathcal{A}$ is a complex subvariety of $X_{1}$ and hence, we can speak of its dimension. We now define the rank of $T\vert_{\mathcal{A}}$ at some $z \in \mathcal{A}$ to be ${\rm rank}_{z} T\vert_{\mathcal{A}} := \dim_{z} \mathcal{A} - \dim_{z} \left( T\vert_{\mathcal{A}} \right)^{-1} T\vert_{\mathcal{A}} (z)$.

\begin{theorem} \cite{rr:1957}
\label{thm:rem}
Let $T : X_{1} \longrightarrow X_{2}$ be a holomorphic map and $\mathcal{A}$ be a complex subvariety of $X_{1}$ of pure dimension. Then $T\vert_{\mathcal{A}}$ is an open map if and only if ${\rm rank}_{z} T\vert_{\mathcal{A}} = \dim (X_{2})$ for all $z \in \mathcal{A}$. 
\end{theorem}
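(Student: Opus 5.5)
The plan is to prove the two implications separately, working locally around a point $z \in \mathcal{A}$ in coordinate charts. Throughout, $m := \dim_{z} \mathcal{A}$, which is the same at every point because $\mathcal{A}$ has pure dimension, and $k_{2} = \dim X_{2}$. Two standard tools from local analytic geometry will be used. The first is the upper semicontinuity of fibre dimension, $z' \mapsto \dim_{z'} \left( T\vert_{\mathcal{A}} \right)^{-1} T\vert_{\mathcal{A}} (z')$. The second is the local theory of finite holomorphic maps, via the Weierstrass preparation theorem and Remmert's proper mapping theorem. We also note that ${\rm rank}_{z} T\vert_{\mathcal{A}} \le k_{2}$ always holds, since a fibre of $T\vert_{\mathcal{A}}$ over a point of the $k_{2}$-dimensional manifold $X_{2}$ has dimension at least $m - k_{2}$ at each of its points.

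\emph{Necessity.} Suppose $r := {\rm rank}_{z} T\vert_{\mathcal{A}} < k_{2}$ at some $z$. By semicontinuity, every point $z'$ in a small neighbourhood $V$ of $z$ in $\mathcal{A}$ has fibre dimension at least $m - r$, so ${\rm rank}_{z'} T\vert_{\mathcal{A}} \le r$ on $V$. Apply the rank theorem (Remmert): if the generic rank of $T$ on $V$ is at most $r$, then $T(V)$ is contained in a countable union of locally analytic sets of dimension at most $r$. It then has empty interior in $X_{2}$, by the Baire category theorem, or simply because such sets have Lebesgue measure zero. Hence $T(V)$ is not a neighbourhood of $T(z)$, and $T\vert_{\mathcal{A}}$ is not open.

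\emph{Sufficiency.} Assume the rank equals $k_{2}$ everywhere. Fix $z$ and let $F = T^{-1}(T(z)) \cap \mathcal{A}$, which then has $\dim_{z} F = m - k_{2}$. In a local chart around $z$, choose an affine subspace $L$ through $z$ of codimension $m - k_{2}$ in general position, so that $z$ is an isolated point of $F \cap L$. Then $S := \mathcal{A} \cap L$ is, near $z$, an analytic set all of whose components through $z$ have dimension exactly $k_{2}$. Purity of $\mathcal{A}$ is used here: no component of $\mathcal{A}$ through $z$ can have smaller dimension, so no component of $S$ through $z$ drops below dimension $k_{2}$. Since $z$ is isolated in its fibre, there is a small neighbourhood $W$ of $z$ in $S$ and a neighbourhood $U$ of $T(z)$ in $X_{2}$ such that $T : W \cap T^{-1}(U) \to U$ is proper with finite fibres. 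By Remmert's proper mapping theorem, its image is an analytic subset of $U$. A finite map preserves dimension, so this image has dimension $k_{2} = \dim U$, and after shrinking $U$ it contains every irreducible local component of $U$ at $T(z)$. Since $U$ is a connected manifold germ, the image equals $U$, and hence $T(W) \supseteq U$. Because $W$ can be taken inside any prescribed neighbourhood of $z$ in $\mathcal{A}$, the image of every neighbourhood of $z$ contains a neighbourhood of $T(z)$; that is, $T\vert_{\mathcal{A}}$ is open.

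The main obstacle is this last step: showing that a proper finite holomorphic map from a pure $k_{2}$-dimensional germ into a $k_{2}$-dimensional manifold covers a full neighbourhood of the image point. I would handle it by the Weierstrass-polynomial description of finite maps, i.e.\ by exhibiting the image locally as a branched covering, or alternatively by citing the local parametrisation theorem for analytic sets. A secondary technical point is choosing the slice $L$ so that $z$ is genuinely isolated in $F \cap L$, which follows from a dimension count for generic linear sections of $F$.
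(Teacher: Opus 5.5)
The paper does not prove this statement. It quotes it from Remmert (in Londhe's formulation), and only the ``if'' direction is ever used, in the proof of Theorem \ref{thm:open}. Your sufficiency argument is the standard one and is fine: you slice $\mathcal{A}$ by a generic affine subspace of codimension $m-k_2$ so that $z$ is isolated in its fibre. The sliced map is then finite on a germ of dimension $k_2$, so its image is an analytic set of full dimension in a connected $k_2$-manifold, hence a neighbourhood of $T(z)$.

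The necessity argument has a genuine gap, in two places.

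\emph{Semicontinuity is used in the wrong direction.} Upper semicontinuity of $z' \mapsto \dim_{z'} (T\vert_{\mathcal{A}})^{-1}T\vert_{\mathcal{A}}(z')$ says the fibre dimension can only \emph{drop} near $z$. So ${\rm rank}_{z'} T\vert_{\mathcal{A}} \ge r$ near $z$, not $\le r$. For the blow-down $T(u,v)=(u,uv)$ on $\mathcal{A}=\mathbb{C}^2$, the rank is $1$ along $\{u=0\}$ and $2$ off it. Consequently $T(V)$ has non-empty interior, so the rank theorem cannot give a thin image. Openness fails there for a different reason: $T(V)$ misses the points $(0,c)$ with $c\neq 0$.

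\emph{The pointwise conclusion is false for reducible $\mathcal{A}$.} You aim to show that $T\vert_{\mathcal{A}}$ is not open \emph{at} $z$. Take $\mathcal{A}=\{uv=0\}\subset\mathbb{C}^2$ and $T(u,v)=u$. The rank at the origin is $0<1$, yet the branch $\{v=0\}$ maps every neighbourhood of the origin onto a neighbourhood of $0$. Openness fails only at the points $(0,c)$ with $c\neq 0$. So any correct proof of necessity must use openness at points other than $z$.

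\emph{A repair: induct on $k_2=\dim X_2$.} The case $k_2=0$ is trivial. For the inductive step, assume $T\vert_{\mathcal{A}}$ is open, and let $y_1$ be a local coordinate on $X_2$ vanishing at $T(z)$, working inside $T^{-1}$ of a chart.
\begin{enumerate}
\item The function $\phi=y_1\circ T$ cannot vanish identically on any irreducible component of $\mathcal{A}$. Otherwise, at a point of that component lying on no other component, a small neighbourhood would be mapped into the hypersurface $\{y_1=0\}$, which has empty interior.
\item Hence $\mathcal{A}'=\{\phi=0\}$ is pure $(m-1)$-dimensional.
\item $T\vert_{\mathcal{A}'}$ is open as a map into the $(k_2-1)$-manifold $\{y_1=0\}$, because $T(W\cap\mathcal{A}')=T(W)\cap\{y_1=0\}$ for every open $W\subseteq\mathcal{A}$.
\item $T\vert_{\mathcal{A}'}$ has the same fibre $F$ through $z$, so the induction hypothesis gives $\dim_z F=(m-1)-(k_2-1)=m-k_2$.
\end{enumerate}
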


\begin{theorem}
\label{thm:open}
Let $\digamma$ be a holomorphic correspondence on $\widehat{\mathbb{C}}$ and let $U$ be an open subset in $\widehat{\mathbb{C}}$. Then, the image set $\digamma(U)$ is also open in $\widehat{\mathbb{C}}$. That is, every holomorphic correspondence on $\widehat{\mathbb{C}}$ maps open sets to open sets.  
\end{theorem}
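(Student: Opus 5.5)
The plan is to reduce the statement to the openness of the projection $\pi_{2}$ restricted to each irreducible component, and to obtain that openness from Theorem \ref{thm:rem}. Since $\digamma(U) = \bigcup_{j=1}^{N} \pi_{2}\left( \pi_{1}^{-1}(U) \cap \Gamma_{j} \right)$ and a finite union of open sets is open, it suffices to fix $j$ and prove that $\pi_{2}\left( \pi_{1}^{-1}(U) \cap \Gamma_{j} \right)$ is open in $\widehat{\mathbb{C}}$. The set $\pi_{1}^{-1}(U) \cap \Gamma_{j}$ is relatively open in $\Gamma_{j}$, because $\pi_{1}$ is continuous. So the whole statement will follow once we know that $\pi_{2}|_{\Gamma_{j}} : \Gamma_{j} \to \widehat{\mathbb{C}}$ is an open map.

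To show this, I will apply Theorem \ref{thm:rem} with $X_{1} = \widehat{\mathbb{C}} \times \widehat{\mathbb{C}}$, $X_{2} = \widehat{\mathbb{C}}$, $T = \pi_{2}$ and $\mathcal{A} = \Gamma_{j}$. Since $\Gamma_{j}$ is an irreducible subvariety of dimension $1$, it is of pure dimension $1$, so $\dim_{z} \Gamma_{j} = 1$ at every $z \in \Gamma_{j}$. We also have $\dim X_{2} = 1$. It therefore remains to check that $\dim_{z} \left( \pi_{2}|_{\Gamma_{j}} \right)^{-1} \pi_{2}|_{\Gamma_{j}}(z) = 0$ for every $z = (z_{1}, z_{2}) \in \Gamma_{j}$. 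This fibre equals $\pi_{2}^{-1}\{z_{2}\} \cap \Gamma_{j}$. By condition (2) of Definition \ref{defn:holocorr}, it is a finite set, and a finite analytic set has dimension $0$ at each of its points. Hence ${\rm rank}_{z} \pi_{2}|_{\Gamma_{j}} = 1 - 0 = 1 = \dim X_{2}$ for all $z \in \Gamma_{j}$, and Theorem \ref{thm:rem} gives that $\pi_{2}|_{\Gamma_{j}}$ is open.

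Combining the two steps, $\pi_{2}|_{\Gamma_{j}}$ sends the relatively open set $\pi_{1}^{-1}(U) \cap \Gamma_{j}$ to an open subset of $\widehat{\mathbb{C}}$, and taking the union over $j$ shows that $\digamma(U)$ is open. Surjectivity of the projections (condition (1)) is not needed for openness. It only ensures that $\digamma(U)$ is nonempty whenever $U$ is.

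The main obstacle is making sure Theorem \ref{thm:rem} really applies. First, it must hold for a possibly singular subvariety $\Gamma_{j}$, which is why the pure-dimensionality hypothesis in that theorem matters. Second, the finiteness of fibres has to be exactly the dimension-zero condition the theorem requires, including at singular points and at points over $\infty$. If the singular points cause trouble, I would fall back on the normalization $\widetilde{\Gamma}_{j} \to \Gamma_{j}$. This is a compact Riemann surface, and on it $\pi_{2}$ lifts to a nonconstant holomorphic map, which is nonconstant because its fibres are finite. Such a map is open by the classical open mapping theorem. Since the normalization is continuous and surjective, the image of the preimage of $\pi_{1}^{-1}(U) \cap \Gamma_{j}$ under the lifted map is exactly $\pi_{2}\left( \pi_{1}^{-1}(U) \cap \Gamma_{j} \right)$, so this gives the same conclusion.
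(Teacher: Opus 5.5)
Your proposal is correct and is essentially the paper's proof: both apply Theorem \ref{thm:rem} to $T=\pi_{2}$ on a pure one-dimensional variety, use condition (2) of Definition \ref{defn:holocorr} to get zero-dimensional fibres and hence rank $1=\dim\widehat{\mathbb{C}}$, and conclude because $\pi_{1}^{-1}(U)$ intersected with the variety is relatively open. The differences are cosmetic: you work with each $\Gamma_{j}$ separately and take a finite union, whereas the paper applies the theorem once to $|\Gamma|=\bigcup_{j}\Gamma_{j}$; your normalization fallback is also valid and more elementary, since it reduces everything to the open mapping theorem for nonconstant holomorphic maps of compact Riemann surfaces.
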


\begin{proof}
Recall that $\digamma(U) = \pi_{2} \left( \pi_{1}^{-1} (U) \cap |\Gamma| \right)$ where $|\Gamma| = \bigcup\limits_{j\, =\, 1}^{N} \Gamma_{j}$. Here, we consider the variety representation of the correspondence $\digamma$ given by $\Gamma =  \sideset{}{'} \sum\limits_{j\, =\, 1}^{N} \Gamma_{j}$. Since $\pi_{1}$ is continuous, we have $\pi_{1}^{-1} (U) \cap |\Gamma|$ as an open set in $|\Gamma|$. 
    
Now, we are in a position to apply Theorem \ref{thm:rem} and hence choose $X_{1} = \widehat{\mathbb{C}} \times \widehat{\mathbb{C}},\ X_{2} = \widehat{\mathbb{C}},\ \mathcal{A} = |\Gamma|$ and $T = \pi_{2}$, in accordance with the notations of Theorem \ref{thm:rem}. As $\digamma$ is a holomorphic correspondence, we have the set $\left\{ w \in \widehat{\mathbb{C}} : \dim \left( \pi_{2}^{-1} \left( \{w\} \right) \cap |\Gamma| \right) > 0 \right\} = \emptyset$. Thus, we have $\dim_{(z, w)} \left( \pi_{2}\vert_{|\Gamma|} \right)^{-1} \pi_{2}\vert_{|\Gamma|} (z, w) = 0$ for all $(z, w) \in |\Gamma|$. Further, since we have $\dim_{(z, w)} (|\Gamma|) = 1$, we get ${\rm rank}_{(z, w)} \pi_{2}\vert_{\mathcal{A}} = 1 = \dim \widehat{\mathbb{C}}$ for all $(z, w) \in |\Gamma|$. 
    
An application of Theorem \ref{thm:rem} now gives $\digamma(U) = \pi_{2} \left( \pi_{1}^{-1} (U) \cap |\Gamma| \right)$ is open in $\widehat{\mathbb{C}}$. 
\end{proof}

It can be easily observed by using Theorem \ref{thm:rem} that the above theorem holds for holomorphic correspondences defined on arbitrary compact complex manifolds as well. Before proceeding to our next result, we fix a notation. Let $W \subseteq \mathcal{S}$ and $X, Y, Z \subseteq \widehat{\mathbb{C}}$. We define,
\[ I_{(X, Y, Z)} (W) := \bigg\{ \beta \in \mathcal{S} : \exists \alpha \in W, z_{0} \in X, z_{1} \in Y, z_{2} \in Z\ \text{such that}\ (z_{0}, z_{1}) \in \Gamma_{\alpha}, (z_{1}, z_{2}) \in \Gamma_{\beta} \bigg\}. \]

\begin{proposition}
\label{prop:open}
Let $\digamma$ be a holomorphic correspondence on $\widehat{\mathbb{C}}$. Then the shift map $\sigma^{\Gamma}: \mathscr{P}^{\Gamma}_{{\rm inv}} \left( \Omega_{{\rm DS}} \right) \longrightarrow \mathscr{P}^{\Gamma}_{{\rm inv}} \left( \Omega_{{\rm DS}} \right)$ is an open map.
\end{proposition}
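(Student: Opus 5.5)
The plan is to check openness on a basis of the topology of $\mathscr{P}^{\Gamma}_{{\rm inv}} \left( \Omega_{{\rm DS}} \right)$ and to describe $\sigma^{\Gamma}$ of each basic set explicitly. Since the metric $d$ induces the subspace topology of the product topology on $\Omega_{{\rm DS}}^{\mathbb{Z}_{+}} \times \{1, \cdots, N\}^{\mathbb{Z}_{+}}$, a basis is given by the cylinders
\[ C\ =\ C \left( V_{0}, \cdots, V_{n};\, a_{1}, \cdots, a_{n} \right)\ =\ \left\{ \mathfrak{X}^{+} \left( z_{0}; \boldsymbol{\alpha} \right)_{\boldsymbol{j}}\ :\ z_{j_{r}}^{(r)} \in V_{r} \cap \Omega_{{\rm DS}}\ (0 \le r \le n),\ \alpha_{r} = a_{r}\ (1 \le r \le n) \right\}, \]
where $V_{0}, \cdots, V_{n}$ are open subsets of $\widehat{\mathbb{C}}$ and $a_{r} \in \{1, \cdots, N\}$. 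Because images of unions are unions of images, it suffices to show that $\sigma^{\Gamma}(C)$ is open for every such $C$.

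The key step is the identity
\[ \sigma^{\Gamma}(C)\ =\ C \left( V_{1} \cap \digamma_{a_{1}}(V_{0}),\, V_{2}, \cdots, V_{n};\, a_{2}, \cdots, a_{n} \right), \]
where $\digamma_{a}$ denotes the set-valued map induced by the single irreducible variety $\Gamma_{a}$. I will prove the two inclusions separately.

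For "$\subseteq$", take an element of $C$. Its shift starts at $z_{1} = z_{j_{1}}^{(1)}$. This point lies in $V_{1}$, and it also lies in $\digamma_{a_{1}}(V_{0})$ because $(z_{0}, z_{1}) \in \Gamma_{a_{1}}$ with $z_{0} \in V_{0}$. The remaining coordinate and symbol constraints are inherited from $C$.

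For "$\supseteq$", take an orbit $\mathfrak{X}^{+}(w_{0}; \boldsymbol{\beta})_{\boldsymbol{k}}$ in the right-hand set. Since $w_{0} \in \digamma_{a_{1}}(V_{0})$, there is $z_{0} \in V_{0}$ with $(z_{0}, w_{0}) \in \Gamma_{a_{1}}$. We need $z_{0} \in \Omega_{{\rm DS}}$, and this follows from the backward invariance of $\Omega_{{\rm DS}}$ under $\digamma$ (\cite{lon:2022}), since $w_{0} \in \Omega_{{\rm DS}}$. Prepending $z_{0}$ and the symbol $a_{1}$, and choosing the index $j_{1}$ to be the label of $w_{0}$ among the images of $z_{0}$ lying in $\Gamma_{a_{1}}$, produces an element of $C$ whose shift is the given orbit. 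This inclusion is where restricting to $\Omega_{{\rm DS}}$ could cause trouble: the preimage point must stay inside $\Omega_{{\rm DS}}$, and backward invariance is exactly what guarantees it.

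It remains to see that $\digamma_{a_{1}}(V_{0})$ is open in $\widehat{\mathbb{C}}$; then the right-hand cylinder is a basic open set and we are done. The single variety $\Gamma_{a_{1}}$ is itself a holomorphic correspondence in the sense of Definition \ref{defn:holocorr}: it is irreducible of dimension $1$, both projections restricted to it are surjective, and its fibres are finite because they are contained in those of $\Gamma$. Hence Theorem \ref{thm:open} applies to $\Gamma_{a_{1}}$ and gives openness of $\digamma_{a_{1}}(V_{0})$. Alternatively, one can rerun the rank argument via Theorem \ref{thm:rem} directly with $\mathcal{A} = \Gamma_{a_{1}}$.

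I expect the main subtlety to be the bookkeeping of the indices $\boldsymbol{j}$ and of multiplicities, so that the prepended orbit is a genuine element of $\mathscr{P}^{\Gamma}_{{\rm inv}} \left( \Omega_{{\rm DS}} \right)$. Beyond that, the two substantive inputs are openness of the component map, from Theorem \ref{thm:open}, and backward invariance of $\Omega_{{\rm DS}}$.
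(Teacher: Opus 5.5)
Your proof is correct and follows the paper's outline: describe $\sigma^{\Gamma}$ of a basic open set as a set whose first coordinate is constrained by an image set, then get openness of that image set from Theorem~\ref{thm:open}. The difference is in the key step, and it favours you.

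The paper constrains the new first point by the full image $\digamma(V_{1}) \cap V_{2}$ and the new first symbol by $I_{(V_{1}, V_{2}, V_{3})}(W_{1}) \cap W_{2}$. It then verifies only that the shifted point lies in the resulting set, not that this set is contained in $\sigma^{\Gamma}(\mathcal{U})$. That containment can fail. Take $\Gamma$ to be the sum of the graphs of $z \mapsto z^{2}$ and $z \mapsto z^{3}$ (so $\Omega_{{\rm DS}}$ is the unit circle), $W_{1} = \{1\}$, and $V_{1}$ a small disc about $i$. Every element of $\sigma^{\Gamma}(\mathcal{U})$ starts near $i^{2} = -1$. Yet orbits starting at $-i = i^{3}$ lie in the paper's set, because $\digamma(V_{1})$ includes the $\Gamma_{2}$-images and $I_{(V_{1}, V_{2}, V_{3})}(W_{1})$ allows every symbol. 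The paper also never checks that the prepended point lies in $\Omega_{{\rm DS}}$.

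Your argument repairs both points through four choices:
\begin{itemize}
\item the single-component image $\digamma_{a_{1}}(V_{0})$ in place of $\digamma(V_{1})$;
\item single-symbol cylinders;
\item an actual set equality, proved by two inclusions, rather than mere membership;
\item backward invariance $\digamma^{\dagger}(\Omega_{{\rm DS}}) \subseteq \Omega_{{\rm DS}}$ to place the prepended point in $\Omega_{{\rm DS}}$.
\end{itemize}
The only extra ingredient is your observation, which is correct, that $\Gamma_{a_{1}}$ on its own satisfies Definition~\ref{defn:holocorr}, so Theorem~\ref{thm:open} applies to it.
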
 

\begin{proof}
Let $\mathcal{U}$ be an open set in $\mathscr{P}^{\Gamma}_{{\rm inv}} \left( \Omega_{{\rm DS}} \right)$ and let $\mathfrak{X}^{+} \left( z_{1}; \boldsymbol{\alpha} \right)_{\boldsymbol{j}} \in \sigma^{\Gamma} (\mathcal{U})$. We show that there exists a basic open set of $\mathscr{P}^{\Gamma}_{{\rm inv}} \left( \Omega_{{\rm DS}} \right)$ that contains $\mathfrak{X}^{+} \left( z_{1}; \boldsymbol{\alpha} \right)_{\boldsymbol{j}}$ and is contained in $\sigma^{\Gamma} (\mathcal{U})$. Let $\mathfrak{X}^{+} \left( z_{0}; \alpha_{0}\boldsymbol{\alpha} \right)_{j_{0}\boldsymbol{j}} \in \mathcal{U}$ such that $\sigma^{\Gamma} (\mathfrak{X}^{+} \left( z_{0}; \alpha_{0}\boldsymbol{\alpha} \right)_{j_{0}\boldsymbol{j}}) = \mathfrak{X}^{+} \left( z_{1}; \boldsymbol{\alpha} \right)_{\boldsymbol{j}}$. As $\mathcal{U}$ is open in $\mathscr{P}^{\Gamma}_{{\rm inv}} \left( \Omega_{{\rm DS}} \right)$, there exists $r, s \in \mathbb{Z}_{+}$ and open sets $V_{m} \subset \widehat{\mathbb{C}}$ and $W_{n} \subset \mathcal{S}$ for $1 \le m \le r$ and $1 \le n \le s$ respectively such that,
\[ \mathfrak{X}^{+} \left( z_{0}; \alpha_{0}\boldsymbol{\alpha} \right)_{j_{0}\boldsymbol{j}} \in \left( \left( V_{1} \times \cdots \times V_{r} \times \widehat{\mathbb{C}}^{\mathbb{Z}_{+}} \right) \times \left( W_{1} \times \cdots \times W_{s} \times \mathcal{S}^{\mathbb{Z}_{+}} \right) \right) \cap \mathscr{P}^{\Gamma}_{{\rm inv}} \left( \Omega_{{\rm DS}} \right) \subseteq \mathcal{U}. \] 
This results in 
\[ \mathfrak{X}^{+} \left( z_{1}; \boldsymbol{\alpha} \right)_{\boldsymbol{j}} \in \sigma^{\Gamma} \left( \left( V_{1} \times \cdots \times V_{r} \times \widehat{\mathbb{C}}^{\mathbb{Z}_{+}} \right) \times \left( W_{1} \times \cdots \times W_{s} \times \mathcal{S}^{\mathbb{Z}_{+}} \right) \right) \cap \mathscr{P}^{\Gamma}_{{\rm inv}} \left( \Omega_{{\rm DS}} \right) \subseteq \sigma^{\Gamma} (\mathcal{U}) \] 
and hence we have the point $\mathfrak{X}^{+} \left( z_{1}; \boldsymbol{\alpha} \right)_{\boldsymbol{j}}$ to be a member in the intersection of $\mathscr{P}^{\Gamma}_{{\rm inv}} \left( \Omega_{{\rm DS}} \right)$ with 
\[ \left( \left( \left( \digamma(V_{1}) \cap V_{2} \right) \times V_{3} \times \cdots \times V_{r} \times \widehat{\mathbb{C}}^{\mathbb{Z}_{+}} \right) \times \left( \left( I_{\left( V_{1}, V_{2}, V_{3} \right)} (W_{1}) \cap W_{2} \right) \times W_{3} \times \cdots \times W_{s} \times \mathcal{S}^{\mathbb{Z}_{+}} \right) \right). \] 

By Theorem \ref{thm:open}, we have $\digamma(V_{1})$ is open and due to discrete topology on $\mathcal{S}$, the set $I_{\left( V_{1}, V_{2}, V_{3} \right)}(W_{1})$ is also open. This is the open neighbourhood of $\mathfrak{X}^{+} \left( z_{1}; \boldsymbol{\alpha} \right)_{\boldsymbol{j}}$ contained in $\sigma^{\Gamma} (\mathcal{U})$ that we were looking for. Hence the result follows.
\end{proof}

\subsection{Topological transitivity and distance expanding conditions}

We begin this subsection with a result that emphasizes the importance of working with the domain $\Omega_{{\rm DS}}$. This will turn out to be one of the crucial ingredients of our main result concerning the uniqueness of equilibrium measure. Had we worked with the domain of the correspondence being $\widehat{\mathbb{C}}$ in our main result, we will have to impose an extra hypothesis related to the concept of topological transitivity. But restricting the domain to $\Omega_{{\rm DS}}$ gives this for free. 

\begin{lemma}
\label{lem:toptrans}
Let $\Gamma$ be a holomorphic correspondence on $\widehat{\mathbb{C}}$ with $\Omega_{{\rm DS}}$ being the support of the Dinh-Sibony measure of $\Gamma$. Then the map $\sigma^{\Gamma}$ is topologically transitive on $\mathscr{P}^{\Gamma}_{{\rm inv}} \left( \Omega_{{\rm DS}} \right)$.
\end{lemma}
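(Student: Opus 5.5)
To prove Lemma \ref{lem:toptrans}, I will show that for any two nonempty open sets $\mathcal{U}, \mathcal{V}$ in $\mathscr{P}^{\Gamma}_{{\rm inv}} \left( \Omega_{{\rm DS}} \right)$ there is $n \ge 1$ with $(\sigma^{\Gamma})^{n}(\mathcal{U}) \cap \mathcal{V} \ne \emptyset$. It suffices to treat basic open sets. So take $\mathcal{U}$ determined by open sets $V_{0}, \dots, V_{r} \subseteq \widehat{\mathbb{C}}$ and symbols $a_{1}, \dots, a_{r}$. Take $\mathcal{V}$ determined by open sets $V'_{0}, \dots, V'_{s}$ and symbols $b_{1}, \dots, b_{s}$. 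Each set is intersected with $\mathscr{P}^{\Gamma}_{{\rm inv}} \left( \Omega_{{\rm DS}} \right)$. Pick a path $\mathfrak{X}^{+}(z_{0};\boldsymbol{\alpha})_{\boldsymbol{j}} \in \mathcal{U}$ and a path $\mathfrak{X}^{+}(w_{0};\boldsymbol{\beta})_{\boldsymbol{k}} \in \mathcal{V}$. The goal is to concatenate an initial segment of the first path, a connecting path inside $\Omega_{{\rm DS}}$, and the second path.

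The key step is to find, for the point $z_{r} := \Pi_{r}$ of the first path and the open neighbourhood $V'_{0}$ of $w_{0} \in \Omega_{{\rm DS}}$, some $m$ and some $w'_{0} \in V'_{0} \cap \Omega_{{\rm DS}}$ with $w'_{0} \in \digamma^{m}(z_{r})$ via a chain lying in $\Omega_{{\rm DS}}$. I would reverse the direction and use backward iterates, because $\Omega_{{\rm DS}}$ is backward invariant. Since $w_{0}$ lies in the support of $\mu_{\digamma}$, we have $\mu_{\digamma}(V'_{0}) > 0$. By Theorem \ref{thm:ds} (or Theorem \ref{thm:bsequi}), $d_{{\rm top}}^{-m}(\digamma^{m})^{*}\delta_{z} \rightharpoonup \mu_{\digamma}$ for non-exceptional $z$. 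Hence for large $m$, some $m$-th preimage of such a $z$ lands in $V'_{0}$, i.e. $z \in \digamma^{m}(V'_{0})$. This gives the wrong direction of implication, so I will instead work with the full backward orbit of $V'_{0}$. Let $A := \bigcup_{m \ge 1} \digamma^{m}(V'_{0} \cap \Omega_{{\rm DS}})$, with forward chains staying in $\Omega_{{\rm DS}}$ (possible by Londhe's theorem and backward invariance). I need $z_{r}$, or some point of $V_{r} \cap \Omega_{{\rm DS}}$, to lie in $A$.

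For this I would use the invariance $\frac{1}{d_{{\rm top}}}\digamma^{*}\mu_{\digamma} = \mu_{\digamma}$. Suppose the open set $V_{r}$ (nonempty in $\Omega_{{\rm DS}}$, so of positive $\mu_{\digamma}$-mass) has no point of $\Omega_{{\rm DS}}$ reachable backward into $V'_{0}$. Then the backward orbit of $V_{r} \cap \Omega_{{\rm DS}}$ avoids $V'_{0}$. Pick a non-exceptional point $z \in V_{r} \cap \Omega_{{\rm DS}}$; the exceptional set is small and $\Omega_{{\rm DS}} \cap V_{r}$ is uncountable, so this should be possible. Equidistribution of $d_{{\rm top}}^{-m}(\digamma^{m})^{*}\delta_{z}$ to $\mu_{\digamma}$ then forces preimages of $z$ into $V'_{0}$, a contradiction. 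These preimages lie in $\Omega_{{\rm DS}}$ by backward invariance, and so does the whole chain from $V'_{0}$ to $z$. Theorem \ref{thm:open} is used to keep everything open: $\digamma^{m}(V'_{0})$ is open, which lets the matching point be wiggled inside $V_{r}$.

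Finally I must splice everything together. Fix $z \in V_{r}\cap\Omega_{{\rm DS}}$ and a chain $z = u_{0}, u_{1}, \dots, u_{m}=w'_{0}$ in $\Omega_{{\rm DS}}$, where $u_{i+1}\in\digamma(u_{i})$ and $w'_{0}\in V'_{0}$. I also need a $\mathcal{U}$-compatible path ending at $z$, with the right symbols, and a $\mathcal{V}$-compatible continuation from $w'_{0}$. This is the main obstacle: the perturbed endpoints must still admit the prescribed symbol sequences and stay in the later $V$'s. I would handle it by running the equidistribution argument on the open set $\digamma$-pushed forward from the tail of $\mathcal{V}$. Concretely, replace $V'_{0}$ by $\{w \in V'_{0}\cap\Omega_{{\rm DS}}: \text{some path in } \mathcal{V} \text{ starts at } w\}$. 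This contains $w_{0}$, and I would argue it contains a relatively open set by iterating Theorem \ref{thm:open} and Proposition \ref{prop:open}. Symmetrically, replace $V_{r}$ by the set of endpoints $z_{r}$ of admissible $\mathcal{U}$-segments, which is open in $\Omega_{{\rm DS}}$ by Proposition \ref{prop:open} applied $r$ times. After that, Londhe's theorem extends the concatenated finite path to an infinite path in $\Omega_{{\rm DS}}$. Its image under $(\sigma^{\Gamma})^{r+m}$ lies in $\mathcal{V}$, which proves transitivity.
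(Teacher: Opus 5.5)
There is a genuine gap in the key step: the connecting chain you build runs in the wrong direction. To get $(\sigma^{\Gamma})^{n}(\mathcal{U}) \cap \mathcal{V} \neq \emptyset$ you need some $z \in V_{r}$ with $\digamma^{m}(z) \cap V'_{0} \neq \emptyset$. Equivalently, $V_{r}$ must meet the backward orbit $\bigcup_{m} (\digamma^{m})^{\dagger}(V'_{0})$, and your splicing paragraph uses exactly this.

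Your contradiction argument proves something else. It assumes that the backward orbit of $V_{r} \cap \Omega_{{\rm DS}}$ avoids $V'_{0}$, which is the negation of a different statement. It then pulls back a point $z \in V_{r}$ and obtains $w' \in V'_{0}$ with $z \in \digamma^{m}(w')$. As you yourself write, that is a chain from $V'_{0}$ \emph{to} $z$. The set $A$, written as a union of forward images of $V'_{0}$, has the same orientation problem. Relabelling $\mathcal{U}$ and $\mathcal{V}$ does not help, because the chain joins the \emph{start} of the $\mathcal{V}$-window to the \emph{end} of the $\mathcal{U}$-window, and that cannot be spliced in either order.

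The fix is to pull back from the target into the source, which is what the paper does in path space. It invokes density of the backward orbit $\mathcal{O}^{-}$ of any point of $\mathscr{P}^{\Gamma}_{{\rm inv}}(\Omega_{{\rm DS}})$ (Lemma 10.1 of \cite{ss:2025}), takes a point of $U_{2}$, and finds one of its $\sigma^{\Gamma}$-preimages in $U_{1}$. In your notation the argument goes as follows.
\begin{itemize}
\item[(i)] Set $O_{0} = V_{0}$ and $O_{i} = \pi_{2}(\pi_{1}^{-1}(O_{i-1}) \cap \Gamma_{a_{i}}) \cap V_{i}$.
\item[(ii)] Each $O_{i}$ is open in $\widehat{\mathbb{C}}$ by Theorem \ref{thm:rem} applied to $\mathcal{A} = \Gamma_{a_{i}}$, as in the proof of Theorem \ref{thm:open}.
\item[(iii)] $O_{r}$ contains $z_{r} \in \Omega_{{\rm DS}}$, so $\mu_{\digamma}(O_{r}) > 0$.
\item[(iv)] Take an equidistributing point $w \in \Omega_{{\rm DS}}$ that starts a path of $\mathcal{V}$. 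Then $\liminf_{m} d_{{\rm top}}^{-m} (\digamma^{m})^{*}\delta_{w}(O_{r}) \ge \mu_{\digamma}(O_{r}) > 0$, so some $m$-th preimage $z$ of $w$ lies in $O_{r}$.
\item[(v)] Backward invariance then puts both the chain $z \to \cdots \to w$ and the $\mathcal{U}$-segment ending at $z$ inside $\Omega_{{\rm DS}}$ automatically.
\end{itemize}

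This repair also removes your other unsupported claim. You asserted that the set of $r$-th coordinates of $\mathcal{U}$-paths, and the set of starting points of $\mathcal{V}$-paths, are relatively open ``by Proposition \ref{prop:open}''. That would require $\Pi_{0}$ to be open on $\mathscr{P}^{\Gamma}_{{\rm inv}}(\Omega_{{\rm DS}})$, which fails because $\Omega_{{\rm DS}}$ is not forward invariant. For example, for the semigroup generated by $z^{2}$ and $z^{2}/4$ one has $\Omega_{{\rm DS}} = \{1 \le |z| \le 4\}$. The open cylinder of paths whose first step is by $z^{2}$ projects onto $\{1 \le |z| \le 2\}$, which is not relatively open in $\Omega_{{\rm DS}}$. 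In the corrected argument only the backward-defined set $O_{r}$ needs to be open.

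What still needs care is the choice of $w$. Under Theorem \ref{thm:ds} it must avoid the exceptional set $\mathcal{E}$. Under Theorem \ref{thm:bsequi}, equidistribution is only known for starting points in $U(\digamma, \mathcal{R})$, so ``pick a non-exceptional point'' needs a genuine argument there.
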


\begin{proof}
We prove that given non-empty open sets $U_{1}$ and $U_{2}$ of $\mathscr{P}^{\Gamma}_{{\rm inv}} \left( \Omega_{{\rm DS}} \right)$, there exists $M \in \mathbb{Z}_{+}$ such that $\left( \sigma^{\Gamma} \right)^{M} (U_{1}) \cap U_{2} \ne \emptyset$. 

For $\mathfrak{X}^{+} \left( z_{0}; \boldsymbol{\alpha} \right)_{\boldsymbol{j}} \in \mathscr{P}^{\Gamma}_{{\rm inv}} \left( \Omega_{{\rm DS}} \right)$, we know that $\mathcal{O}^{-} \left( \mathfrak{X}^{+} \left( z_{0}; \boldsymbol{\alpha} \right)_{\boldsymbol{j}} \right) := \bigcup\limits_{n\, \ge\, 1} \left( \sigma^{\Gamma} \right)^{-n} \left( \mathfrak{X}^{+} \left( z_{0}; \boldsymbol{\alpha} \right)_{\boldsymbol{j}} \right)$ is dense in $\mathscr{P}^{\Gamma}_{{\rm inv}} \left( \Omega_{{\rm DS}} \right)$. Interested readers may refer to Lemma 10.1 in \cite{ss:2025} for a proof of the same. 

Now let $\mathfrak{X}^{+} \left( z_{0}; \boldsymbol{\alpha} \right)_{\boldsymbol{j}} \in U_{2}$. As $\mathcal{O}^{-} \left( \mathfrak{X}^{+} \left( z_{0}; \boldsymbol{\alpha} \right)_{\boldsymbol{j}} \right)$ is dense in $\mathscr{P}^{\Gamma}_{{\rm inv}} \left( \Omega_{{\rm DS}} \right)$, there exists $M \in \mathbb{Z}_{+}$ such that $\mathfrak{X}^{+} \left( z_{j_{-M}}^{(M)}; \boldsymbol{\alpha' \alpha} \right)_{\boldsymbol{j' j}} \in \mathcal{O}^{-} \left( \mathfrak{X}^{+} \left( z_{0}; \boldsymbol{\alpha} \right)_{\boldsymbol{j}} \right) \cap U_{1}$. Here, the symbol $\mathfrak{X}^{+} \left( z_{j_{-M}}^{(-M)}; \boldsymbol{\alpha' \alpha} \right)_{\boldsymbol{j' j}}$ must be interpreted similarly to the explanation given for the notation $\mathfrak{X}^{+} \left( z_{0}; \boldsymbol{\alpha} \right)_{\boldsymbol{j}}$, in Section \ref{sec:two}. Furthermore, in this case, $\boldsymbol{\alpha'}$ is a suitable $M$-tuple with symbols from $\mathcal{S}$ and $\boldsymbol{j'}$ is a suitable $M$-tuple with symbols from the set $\{1, 2, \cdots, d_{{\rm top}}\}$, where $d_{{\rm top}}$ denotes the topological degree of $\digamma$. As we have $\left( \sigma^{\Gamma} \right)^{M} \left( \mathfrak{X}^{+} \left( z_{j_{-M}}^{(M)}; \boldsymbol{\alpha' \alpha} \right)_{\boldsymbol{j' j}} \right) = \mathfrak{X}^{+} \left( z_{0}; \boldsymbol{\alpha} \right)_{\boldsymbol{j}}$, the proof of the lemma is now complete.    
\end{proof}

The remainder of the section deals with distance expanding correspondences. We first give the definition of this notion and then prove a result that shall be useful in the next section. 

\begin{definition}
Let $\digamma$ be a holomorphic correspondence on $\widehat{\mathbb{C}}$. The correspondence $\digamma$ is said to be \emph{distance expanding} on $\widehat{\mathbb{C}}$ if there exists $\lambda > 1,\ n \in \mathbb{Z}_{+}$ and $\eta > 0$ such that for all $z, w \in \widehat{\mathbb{C}}$ with $\rho (z, w) \le \eta$, we have $\displaystyle{\inf \big\{ \rho (z_{1}, w_{1}) : z_{1} \in \digamma(z_{0}), w_{1} \in \digamma(w_{0}) \big\} \ge \lambda\, \rho(z_{0}, w_{0})}$. 
\end{definition}

\begin{proposition}
\label{prop:distexp} 
Let $\digamma$ be a distance expanding holomorphic correspondence on $\widehat{\mathbb{C}}$. Then the corresponding shift map $\sigma^{\Gamma}: \mathscr{P}^{\Gamma}_{{\rm inv}} \left( \Omega_{{\rm DS}} \right) \longrightarrow \mathscr{P}^{\Gamma}_{{\rm inv}} \left( \Omega_{{\rm DS}} \right)$ is also distance expanding.
\end{proposition}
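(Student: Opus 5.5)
The plan is to verify the metric definition of a distance expanding map directly. We look for $\lambda' > 1$ and $\eta' > 0$ such that, for all $x, y \in \mathscr{P}^{\Gamma}_{{\rm inv}}(\Omega_{{\rm DS}})$ with $d(x, y) \le \eta'$, we have $d(\sigma^{\Gamma} x, \sigma^{\Gamma} y) \ge \lambda'\, d(x, y)$.

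If the definition of a distance expanding correspondence is used with an iterate $n > 1$, I would first run the argument for $\digamma^{n}$ and $(\sigma^{\Gamma})^{n}$. I would then invoke the standard fact that a map some iterate of which is distance expanding can be handled through an equivalent adapted metric. In what follows I take $n = 1$.

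The metric $d$ is a maximum of two quantities, and I would treat them separately. Write $x = \mathfrak{X}^{+}(z_{0}; \boldsymbol{\alpha})_{\boldsymbol{j}}$, $y = \mathfrak{X}^{+}(w_{0}; \boldsymbol{\beta})_{\boldsymbol{k}}$, $\rho_{m} = \rho(z^{(m)}, w^{(m)})$ and $g(t) = t/(1+t)$.

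\emph{The series part.} Set $S(x, y) = \sum_{m \ge 0} 2^{-m} g(\rho_{m})$. The key identity is
\[ S(x, y) = g(\rho_{0}) + \tfrac{1}{2} S(\sigma^{\Gamma} x, \sigma^{\Gamma} y). \]
If $d(x, y) \le \eta'$ with $\eta'$ small, then $g(\rho_{0}) \le \eta'$, so $\rho_{0} \le \eta$ is as small as we like. Since $z^{(1)} \in \digamma(z_{0})$ and $w^{(1)} \in \digamma(w_{0})$, the expansion hypothesis gives $\rho_{1} \ge \lambda \rho_{0}$.

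Because $g$ is increasing with $g(\lambda t) \ge \lambda g(t)(1+t)/(1+\lambda t)$, we get $g(\rho_{1}) \ge \lambda (1-\epsilon) g(\rho_{0})$ for $\eta$ small. In particular $R := S(\sigma^{\Gamma} x, \sigma^{\Gamma} y) \ge g(\rho_{1}) \ge \lambda(1-\epsilon) g(\rho_{0})$. The identity then gives, for any $\lambda'$,
\[ R - \lambda' S = R(1 - \lambda'/2) - \lambda' g(\rho_{0}) \ge g(\rho_{0}) \bigl( \lambda(1-\epsilon)(1-\lambda'/2) - \lambda' \bigr), \]
which is nonnegative once $1 < \lambda' \le 2\lambda(1-\epsilon)/(2 + \lambda(1-\epsilon))$. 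Such a $\lambda' > 1$ exists because $\lambda > 1$ and $\epsilon$ is small. Note that only the first step of the orbit is used, so no control of $\rho_{m}$ for $m \ge 1$ is needed.

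\emph{The symbol part.} Let $l = \min\{ l : \alpha_{l} \neq \beta_{l} \}$. After the shift this index becomes $l - 1$, so the term $2^{-l}$ doubles. Choosing $\eta' < 1/4$ forces $l \ge 3$, so the shifted index stays legitimate, i.e. $\ge 1$. If $\boldsymbol{\alpha} = \boldsymbol{\beta}$, this term is $0$ before and after the shift.

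Combining the two parts, each term of the maximum is multiplied by at least $\lambda'' := \min\{\lambda', 2\} = \lambda'$. Hence the maximum satisfies $d(\sigma^{\Gamma} x, \sigma^{\Gamma} y) \ge \lambda' d(x, y)$ whenever $d(x, y) \le \eta'$, where $\eta'$ is chosen so that $g^{-1}(\eta') \le \eta$, $\eta' < 1/4$, and the $\epsilon$-estimate above holds.

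The main obstacle I expect is this bookkeeping between the two components of the max together with the nonlinearity of $g$. I would handle it exactly as in the displayed inequality, absorbing the $(1+t)/(1+\lambda t)$ loss into $\epsilon$ by shrinking $\eta$. A secondary worry is the iterate $n$ in the definition. If the adapted-metric reduction is unavailable, I would instead state the conclusion for $(\sigma^{\Gamma})^{n}$. The same computation applies there using $S(x, y) = \sum_{m < n} 2^{-m} g(\rho_{m}) + 2^{-n} S((\sigma^{\Gamma})^{n} x, (\sigma^{\Gamma})^{n} y)$.
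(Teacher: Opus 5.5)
Your decomposition matches the paper's: the series part is the metric $d_{\mathcal{O}(\digamma)}$ for $\sigma_{1}$, the symbol part is the full shift $\sigma_{2}$, and $d$ is their maximum. The identity $S(x,y) = g(\rho_{0}) + \frac{1}{2} S(\sigma^{\Gamma}x, \sigma^{\Gamma}y)$, the doubling of the symbol term once $d(x,y) < 1/2$, and the passage through the maximum are all correct.

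\textbf{The gap is the last step of the series part.} With $\mu = \lambda(1-\epsilon)$ you need some $\lambda'$ with $1 < \lambda' \le 2\mu/(2+\mu)$. But $2\mu/(2+\mu) > 1$ if and only if $\mu > 2$. So your argument proves the proposition only when $\lambda > 2$. For $\lambda = 3/2$, for instance, $2\mu/(2+\mu) \le 6/7$ and no admissible $\lambda'$ exists. This cannot be fixed by adjusting constants. The only facts you use are $\rho_{1} \ge \lambda \rho_{0}$ and $R \ge g(\rho_{1})$. These are compatible with $\rho_{1} = \lambda\rho_{0}$ and $\rho_{m} = 0$ for all $m \ge 2$. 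In that case $R/S = g(\rho_{1}) / \bigl( g(\rho_{0}) + \tfrac{1}{2} g(\rho_{1}) \bigr) \approx 2\lambda/(2+\lambda) < 1$. What excludes such configurations is the expansion hypothesis applied at later times. That is exactly the control of $\rho_{m}$, $m \ge 1$, that your remark says is not needed.

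\textbf{The repair} is to use the smallness of $d(x,y)$ along many coordinates. Since $2^{-m} g(\rho_{m}) \le S(x,y) \le \eta'$, choosing $\eta' \le 2^{-k} g(\eta)$ forces $\rho_{m} \le \eta$ for $0 \le m \le k$. Applying the hypothesis successively then gives $\rho_{m} \ge \lambda^{m} \rho_{0}$ and $\lambda^{m} \rho_{0} \le \lambda \eta$ for $1 \le m \le k$. Hence
\[ R\ =\ \sum_{m\, \ge\, 1} 2^{1-m} g(\rho_{m})\ \ge\ \frac{2\, g(\rho_{0})}{1 + \lambda \eta} \sum_{m\, =\, 1}^{k} \left( \frac{\lambda}{2} \right)^{m}\ =:\ c\, g(\rho_{0}). \]
For $1 < \lambda < 2$ we have $\sum_{m \ge 1} (\lambda/2)^{m} = \lambda/(2 - \lambda) > 1$, and for $\lambda \ge 2$ the partial sums are unbounded. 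So you can fix $k$ and then shrink $\eta$ so that $c > 2$. Your displayed inequality with $c$ in place of $\lambda(1-\epsilon)$ then yields $\lambda' = 2c/(2+c) \in (1, 2)$. The symbol part and the maximum go through as you wrote them, with $\eta'$ also taken below $1/4$. Done this way, your route gives a self-contained proof of the $\sigma_{1}$ step, which the paper only obtains by citing Proposition 7.8 of \cite{llz:2023}.

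Your fallback for an iterate $n$ has the same defect. The inequality $\rho_{n} \ge \lambda \rho_{0}$ alone does not control $\sum_{m < n} 2^{-m} g(\rho_{m})$, so that case also needs the expansion applied along many blocks of length $n$.
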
 

\begin{proof}
We initially observe that in order to prove this proposition, it is sufficient to prove that the map $\sigma = \sigma_{1} \times \sigma_{2}$ as defined at the beginning of this current section is distance expanding. This is because the restriction of a distance expanding map to an invariant subspace is again distance expanding and hence $\sigma^{\Gamma}$ would then turn out to be distance expanding on $\mathscr{P}^{\Gamma}_{{\rm inv}} \left( \Omega_{{\rm DS}} \right)$. Also, to prove that $\sigma$ is distance expanding, it is sufficient to prove that $\sigma_{1}$ and $\sigma_{2}$ are distance expanding on their appropriate domains. As $\sigma_{2}$ is the shift map on the space $\mathcal{S}^{\mathbb{Z}_{+}}$, it is an easy exercise to prove that it is distance expanding. Now all that remains to be proved is that $\sigma_{1}$ is distance expanding on $\mathcal{O} (\digamma)$. This follows \emph{mutatis mutandis} the proof of Proposition 7.8 in \cite{llz:2023} and using the hypothesis that $\digamma$ is distance expanding on $\widehat{\mathbb{C}}$. 
\end{proof}

\section{Proofs of the main theorems} 
\label{sec:five}

In this section, we prove our main theorem on the existence of a unique equilibrium state corresponding to any real valued H\"{o}lder continuous function defined on $\Omega_{{\rm DS}}$. Before doing so, we state a theorem that can be found in \cite{pu:2010} that shall be useful for us in proving our main theorem. See Theorem 4.6.2 in the indicated reference for a proof of the same. 

\begin{theorem}
\label{thm:mapunique}
Let $T : X \longrightarrow X$ be an open, distance expanding, topologically transitive and continuous map on a compact metric space $X$. Then there exists a unique equilibrium state for every H\"{o}lder continuous potential $\phi \in \mathcal{C} (X, \mathbb{R})$. 
\end{theorem}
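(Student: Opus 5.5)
The natural route is the classical Ruelle--Perron--Frobenius approach: build a Gibbs measure for $\phi$ out of the transfer operator, show it is an equilibrium state, and then show that any equilibrium state must coincide with it. Throughout, $S_{n}\phi := \sum_{k = 0}^{n - 1} \phi \circ T^{k}$.

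\textbf{Step 1: local structure.} Since $T$ is distance expanding, it is injective on balls of some radius $\eta$. Combining this with openness, there is $\xi > 0$ such that for every $x \in X$ the set $T^{-1}(B(x, \xi))$ splits into finitely many disjoint pieces, each mapped homeomorphically onto $B(x, \xi)$. The corresponding inverse branches are $\lambda^{-1}$-contractions. Iterating, every inverse branch $T^{-n}_{\nu}$ on $B(x, \xi)$ contracts distances by at least $\lambda^{-n}$. Because $\phi$ is H\"older with exponent $\beta$, this gives bounded distortion: $|S_{n}\phi(T^{-n}_{\nu} y) - S_{n}\phi(T^{-n}_{\nu} z)| \le C \rho(y, z)^{\beta}$ with $C$ independent of $n$. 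The same local structure shows that the number of preimages is locally constant. Hence the transfer operator
\[ \mathcal{L}_{\phi} g(x) := \sum_{T y = x} e^{\phi(y)} g(y) \]
maps $\mathcal{C}(X, \mathbb{R})$ to itself and is positive.

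\textbf{Step 2: conformal measure, eigenfunction and Gibbs measure.} Apply the Schauder--Tychonoff fixed point theorem to the map $m \mapsto \mathcal{L}_{\phi}^{*} m / \mathcal{L}_{\phi}^{*} m(\mathbb{1})$ on probability measures. This yields $m$ with $\mathcal{L}_{\phi}^{*} m = \kappa m$ for some $\kappa > 0$.

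Next, $m$ has full support. Indeed, topological transitivity together with openness implies that for every ball $B$, finitely many images $T^{j}(B)$ cover $X$; combined with the Jacobian $\kappa e^{-\phi}$ of $m$, this forces $m(B) > 0$.

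Bounded distortion then gives $\mathcal{L}_{\phi}^{n}\mathbb{1} / \kappa^{n}$ uniformly bounded above and below, and equicontinuous. By Arzel\`a--Ascoli, a Ces\`aro limit $h$ exists, satisfies $\mathcal{L}_{\phi} h = \kappa h$, and is bounded away from $0$. Set $\mu := h m$, normalized. It is $T$-invariant, and it satisfies the Gibbs property
\[ \mu\left(T^{-n}_{\nu} B(x, \xi)\right) \asymp \exp\left(S_{n}\phi(y) - n \log \kappa\right) \]
for any $y$ in that set. From the Gibbs property and a partition into sets of diameter less than $\xi$ (a generator, since $T$ is expansive), one computes $h_{\mu}(T) + \int \phi\, d\mu = \log \kappa \ge {\rm Pr}(T, \phi)$. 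Counting $(n, \epsilon)$-separated sets against the Gibbs bounds gives ${\rm Pr}(T, \phi) \le \log \kappa$. Therefore $\log \kappa = {\rm Pr}(T, \phi)$ and $\mu$ is an equilibrium state.

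\textbf{Step 3: uniqueness.} First, $\mu$ is ergodic. Let $A$ be a $T$-invariant set with $\mu(A) > 0$. Push density points of $A$ forward along inverse branches, using the Gibbs property to transfer densities with bounded distortion. This produces balls of a fixed radius $\xi$ in which $A$ has density close to $1$. Transitivity and openness then spread this over $X$, so $\mu(A) = 1$.

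Now let $\nu$ be any equilibrium state; by ergodic decomposition we may assume $\nu$ is ergodic. Take the generator $\mathcal{P}$ above. The Gibbs property shows that $-\frac{1}{n}\log \mu(\mathcal{P}^{n}(x))$ and $\frac{1}{n}\left({\rm Pr}(T, \phi)\, n - S_{n}\phi(x)\right)$ differ by $O(1/n)$. If $\nu \perp \mu$, the standard lemma gives
\[ H_{\nu}(\mathcal{P}^{n}) + \int S_{n}\phi\, d\nu - n\,{\rm Pr}(T, \phi) \to -\infty \]
along sets where $\nu$ is large and $\mu$ is small. This contradicts $h_{\nu}(T) + \int \phi\, d\nu = {\rm Pr}(T, \phi)$. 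Hence $\nu \not\perp \mu$, and two ergodic invariant measures that are not mutually singular are equal, so $\nu = \mu$.

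\textbf{Main obstacle.} I expect the hardest part to be the lack of mixing: $T$ is only transitive, not exact. This affects both the positivity of $h$ and the ergodicity argument. My first attempt is the spectral decomposition for open transitive distance expanding maps: $X = X_{1} \cup \cdots \cup X_{k}$, cyclically permuted by $T$, with $T^{k}|_{X_{i}}$ topologically exact. I would run the above argument for $T^{k}$ and the potential $S_{k}\phi$ on each $X_{i}$, then average the resulting measures over the cycle to obtain a $T$-invariant Gibbs measure. Uniqueness for $T$ then follows from uniqueness on each piece.
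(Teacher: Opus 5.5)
The paper does not prove this statement. It quotes it from \cite{pu:2010} (Theorem 4.6.2), so your outline has to stand on its own. Steps 1 and 2 are the standard Ruelle--Perron--Frobenius construction (the objects $m$, $h$, $\mu = hm$ are the ones the paper later quotes in Theorems \ref{thm:measureexistsruelle} and \ref{thm:mapseigenfn}) and are fine in outline.

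The gap is in Step 3. There you claim that for the fixed partition $\mathcal{P}$ into sets of diameter less than $\xi$, the quantities $-\frac{1}{n}\log\mu(\mathcal{P}^{n}(x))$ and ${\rm Pr}(T,\phi)-\frac{1}{n}S_{n}\phi(x)$ differ by $O(1/n)$. Only one half of this holds. The cylinder $\mathcal{P}^{n}(x)$ lies in the Bowen ball $B_{n}(x,\xi)$, hence in an inverse-branch image of a $\xi$-ball, which gives $\mu(\mathcal{P}^{n}(x))\le C\exp(S_{n}\phi(x)-n{\rm Pr}(T,\phi))$. There is no matching lower bound, because a cylinder of a join need not contain any Bowen ball of fixed radius; near $\partial\mathcal{P}$ it can be arbitrarily thin. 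Already for the doubling map with $\phi=0$: if $T^{k}a$ lands within $\delta$ of another endpoint $a'$ of $\mathcal{P}$, then $\mathcal{P}^{k+1}$ contains a cylinder of Lebesgue measure at most $\delta 2^{-k}$. A two-sided Gibbs bound would instead force measure at least a fixed multiple of $2^{-k}$.

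The missing half is exactly what the singularity lemma needs. To get $H_{\nu}(\mathcal{P}^{n})+\int S_{n}\phi\,d\nu-n{\rm Pr}(T,\phi)\to-\infty$, you must bound $\sum_{A\in\mathcal{P}^{n},\,A\subset C_{n}}\exp(\sup_{A}S_{n}\phi-n{\rm Pr}(T,\phi))$ by a constant times $\mu(C_{n})$. That requires $\exp(\sup_{A}S_{n}\phi-n{\rm Pr}(T,\phi))\le K\mu(A)$ for every cylinder $A$, so the uniqueness step as written does not go through.

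You need an extra device here; two options follow.

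\textbf{Adapted partitions.} Keep Bowen's argument but replace the joins $\mathcal{P}^{n}$ by partitions $\mathcal{A}_{n}$ adapted to a maximal $(n,\epsilon)$-separated set $E_{n}$, so that $B_{n}(x,\epsilon/2)\subset A_{x}\subset\overline{B}_{n}(x,\epsilon)$ for $x\in E_{n}$. Then both Gibbs bounds hold on every element. Since $\mathrm{diam}\,\mathcal{A}_{n}\to 0$, the sets $C_{n}$ can be assembled from elements of $\mathcal{A}_{n}$. Finally $n h_{\nu}(T)=h_{\nu}(T^{n})\le H_{\nu}(\mathcal{A}_{n})$, because $\mathcal{A}_{n}$ has diameter below the expansive constant in the Bowen metric $d_{n}$ and is therefore a generator for $T^{n}$.

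\textbf{Jacobian argument.} This avoids singular measures entirely. Write $h_{\nu}(T)$ by the Rokhlin formula over a partition into injectivity domains, and apply Jensen's inequality against the normalised operator $g\mapsto\kappa^{-1}h^{-1}\mathcal{L}_{\phi}(hg)$. This gives $h_{\nu}(T)+\int\phi\,d\nu\le\log\kappa$, with equality only if $\nu$ is fixed by the dual of the normalised operator. Such a fixed point must be $\mu$: either because the Ces\`aro averages of the normalised operator converge uniformly to $\int g\,d\mu$, or because $h^{-1}\nu$ is then conformal, hence Gibbs, hence equivalent to $\mu$, and $\mu$ is ergodic.

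Two smaller points. In Step 2 the same one-sided bound only yields $h_{\mu}(T)+\int\phi\,d\mu\ge\log\kappa$, not the equality you display, and your display puts the inequality $\log\kappa\ge{\rm Pr}(T,\phi)$ at the wrong step. The one-sided bound still suffices there, combined with your counting bound ${\rm Pr}(T,\phi)\le\log\kappa$ and the variational principle. Also, your ``main obstacle'' is not one, since mixing is never used. Positivity of $h$ and ergodicity of $\mu$ only need the covering property $\bigcup_{j\le N}T^{j}(B)=X$, so the spectral decomposition is unnecessary. Note that this covering property needs distance expansion as well as openness and transitivity.
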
 

\subsection{Proof of Theorem \ref{thm:uniqueness}}

We are now ready to prove the existence of a unique equilibrium state associated to a H\"{o}lder continuous potential. 

\begin{proof}[of Theorem \ref{thm:uniqueness}] 

We first make an observation on the entropy $h_{\nu} (\Gamma)$, for $\nu \in \mathscr{S}^{\Gamma}$. Using the hypothesis in the statement of the theorem and Proposition \ref{prop:distexp}, we have $\sigma^{\Gamma}$ is distance expanding on $\mathscr{P}^{\Gamma}_{{\rm inv}} \left( \Omega_{{\rm DS}} \right)$. Thus by Theorem 3.1.1 in \cite{pu:2010}, $\sigma^{\Gamma}$ is a forward expansive map. By Theorem 2.5.6 in \cite{pu:2010}, this results in the map $\mu \mapsto h_{\mu} (\sigma^{\Gamma})$ being upper semi-continuous on $\mathcal{M} \left( \mathscr{P}^{\Gamma}_{{\rm inv}} \left( \Omega_{{\rm DS}} \right), \sigma^{\Gamma} \right)$. 

For $\nu \in \mathscr{S}^{\Gamma}$, we have by Definition \ref{defn:entropycorr} and Theorem \ref{thm:entropyconnect},
\begin{eqnarray}
\label{eqn:entropyuniqmeasure}
h_{\nu} (\Gamma) & = & \sup \left\{ h_{(\nu, \mu)} (\Gamma)\ :\ \mu \in \mathcal{M} \left( \mathscr{P}^{\Gamma}_{{\rm inv}} \left( \Omega_{{\rm DS}} \right), \sigma^{\Gamma} \right) \cap \left( \left( \Pi_{0} \right)_{*} \right)^{-1} (\nu) \right\} \nonumber \\ 
& = & \sup \left\{ h_{\mu} (\sigma^{\Gamma}): \mu \in \mathcal{M} \left( \mathscr{P}^{\Gamma}_{{\rm inv}} \left( \Omega_{{\rm DS}} \right), \sigma^{\Gamma} \right) \cap \left( \left( \Pi_{0} \right)_{*} \right)^{-1} (\nu) \right\}. 
\end{eqnarray}

Since $\mathcal{M} \left( \mathscr{P}^{\Gamma}_{{\rm inv}} \left( \Omega_{{\rm DS}} \right), \sigma^{\Gamma} \right) \cap \left( \left( \Pi_{0} \right)_{*} \right)^{-1} (\nu)$ is a compact set and the entropy map is upper semi-continuous, the supremum in Equation \eqref{eqn:entropyuniqmeasure} is attained. 

We now proceed to the next step of the proof. Let $f \in \mathcal{C} \left( \Omega_{{\rm DS}}, \mathbb{R} \right)$ be some H\"{o}lder continuous function. Consider the H\"{o}lder continuous function $F = f \circ \Pi_{0}$. By using Proposition \ref{prop:open}, Proposition \ref{prop:distexp}, Lemma \ref{lem:toptrans} and Theorem \ref{thm:mapunique}, we infer that there exists a unique measure $\mu_{F} \in \mathcal{M} \left( \mathscr{P}^{\Gamma}_{{\rm inv}} \left( \Omega_{{\rm DS}} \right), \sigma^{\Gamma} \right)$ such that
\begin{equation}
\label{eqn:varprinmap}
{\rm Pr} \left( \sigma^{\Gamma}, F \right)\ \ =\ \ h_{\mu_{F}} \left( \sigma^{\Gamma} \right) + \int\limits_{\mathscr{P}^{\Gamma}_{{\rm inv}} \left( \Omega_{{\rm DS}} \right)} F \mathrm{d}\mu_{F}. 
\end{equation} 

Set $\nu_{f} = \left( \Pi_{0} \right)_{*} \mu_{F}$. Now let $\mu' \in \mathcal{M} \left( \mathscr{P}^{\Gamma}_{{\rm inv}} \left( \Omega_{{\rm DS}} \right), \sigma^{\Gamma} \right) \cap \left( \left( \Pi_{0} \right)_{*} \right)^{-1} \left( \nu_{f} \right)$. Clearly we have $\displaystyle{\int \left( f \circ \Pi_{0} \right) \mathrm{d}\mu' = \int \left( f \circ \Pi_{0} \right) \mathrm{d}\mu_{F}}$. This, when combined with the expression for ${\rm Pr} \left( \sigma^{\Gamma}, F \right)$ obtained using the variational principle under the dynamics of $\sigma^{\Gamma}$ and Equation \eqref{eqn:varprinmap} gives 
\begin{equation}
\label{eqn:entropycorrmap}
h_{\nu_{f}} \left( \Gamma \right)\ \ =\ \ h_{\mu_{F}} \left( \sigma^{\Gamma} \right). 
\end{equation}

We now prove that $\nu_{f}$ is the required unique equilibrium state for $f$. Observe that we have 
\begin{equation}
\label{eqn:intgeq}
\int\limits_{\mathscr{P}^{\Gamma}_{{\rm inv}} \left( \Omega_{{\rm DS}} \right)} F \mathrm{d}\mu_{F}\ \ =\ \ \int\limits_{\mathscr{P}^{\Gamma}_{{\rm inv}} \left( \Omega_{{\rm DS}} \right)} \left( f \circ \Pi_{0} \right) \mathrm{d}\mu_{F}\ \ =\ \ \int\limits_{\Omega_{{\rm DS}}} f \mathrm{d} \left( \left( \Pi_{0} \right)_{*} \mu_{F} \right)\ \ =\ \ \int\limits_{\Omega_{{\rm DS}}} f \mathrm{d} \nu_{f}.
\end{equation}

By using Theorem \ref{thm:eqpres} and Equations \eqref{eqn:entropycorrmap} and \eqref{eqn:intgeq} in Equation \eqref{eqn:varprinmap}, we obtain 
\[ {\rm Pr} (\Gamma, f)\ \ =\ \ h_{\nu_{f}} (\Gamma) + \int\limits_{\Omega_{{\rm DS}}} f \mathrm{d}\nu_{f}, \]
which implies that $\nu_{f}$ is an equilibrium state for $f$. 

To prove that it is the unique equilibrium state, let us assume $\nu \in \mathscr{S}^{\Gamma}$ to be another measure (possibly different from $\nu_{f}$) such that $\displaystyle{{\rm Pr} (\Gamma, f) = h_{\nu} (\Gamma) + \int f \mathrm{d}\nu}$. Let $\mu_{1} \in \mathcal{M} \left( \mathscr{P}^{\Gamma}_{{\rm inv}} \left( \Omega_{{\rm DS}} \right), \sigma^{\Gamma} \right)$ be such that $\left( \Pi_{0} \right)_{*} \mu_{1} = \nu$ and $h_{\nu} (\Gamma) = h_{\mu_{1}} \left( \sigma^{\Gamma} \right)$. Suppose $\mu_{1} = \mu_{F}$, then we have $\left( \Pi_{0} \right)_{*} \mu_{1} = \left( \Pi_{0} \right)_{*} \mu_{F}$ and thus, we are done. If not, then we have 
\[ {\rm Pr} (\Gamma, f)\ \ =\ \ h_{\nu} (\Gamma) + \int f \mathrm{d}\nu\ \ =\ \ h_{\nu_{f}} (\Gamma) + \int f \mathrm{d}\nu_{f}. \] 
This gives 
\[ h_{\mu_{1}} \left( \sigma^{\Gamma} \right) + \int \left( f \circ \Pi_{0} \right) \mathrm{d}\mu_{1}\ \ =\ \ h_{\mu_{F}} \left( \sigma^{\Gamma} \right) + \int \left( f \circ \Pi_{0} \right) \mathrm{d}\mu_{F}\ \ =\ \ {\rm Pr} \left( \sigma^{\Gamma}, f \circ \Pi_{0} \right). \] 

Now, using the uniqueness of the equilibrium state for the H\"{o}lder continuous function $f \circ \Pi_{0}$ under the dynamics of $\sigma^{\Gamma}$, we have that $\mu_{1} = \mu_{F}$, which implies $\nu_{f} = \nu$. Thus, $f$ has a unique equilibrium state.
\end{proof}

\subsection{Some results related to the Ruelle operator}
\label{subsec} 

We invesigate and learn a few properties of the Ruelle operator associated with holomorphic correspondences, which was studied in \cite{ss:2025}. Our aim here is to connect the Ruelle operator and the unique equilibrium state obtained in Theorem \ref{thm:uniqueness} whenever possible. We first begin with some basic definitions. 

Let $f \in \mathcal{C} \left( \Omega_{{\rm DS}}, \mathbb{R} \right)$. Define an operator $\mathcal{L}_{f} : \mathcal{C} \left( \Omega_{{\rm DS}}, \mathbb{R} \right) \longrightarrow \mathcal{C} \left( \Omega_{{\rm DS}}, \mathbb{R} \right)$ whose action on points in $\Omega_{{\rm DS}}$ is given by 
\[ \left( \mathcal{L}_{f} (g) \right) (z)\ \ =\ \ \sum\limits_{w\, \in\, \digamma^{\dagger} (z)} e^{f(w)} g(w). \] 
$\mathcal{L}_{f}$ is called the \emph{Ruelle operator associated to $f$} and its dual linear map defined on the dual space $\mathcal{C} \left( \Omega_{{\rm DS}}, \mathbb{R} \right)^{*}$ is given by 
\[ \left( \mathcal{L}_{f}^{*} (m) \right) (g)\ \ =\ \ \int\limits_{\Omega_{{\rm DS}}} \mathcal{L}_{f}(g) \mathrm{d}m\ \ \ \text{where}\ \ \ g \in \mathcal{C} \left( \Omega_{{\rm DS}}, \mathbb{R} \right). \] 

For a continuous map $T$ defined on a compact metric space $X$ and a function $\phi \in \mathcal{C} \left( X, \mathbb{R} \right)$, we denote the Ruelle operator associated to $\phi$ by $L_{\phi}$ defined on $\mathcal{C} \left( X, \mathbb{R} \right)$, with its action given by $\displaystyle{L_{\phi} (\psi) (x) = \sum\limits_{y\, \in\, T^{-1} (x)} e^{\phi(y)} \psi(y)}$ where $x \in X$. The dual of $L_{\phi}$ is denoted by $L_{\phi}^{*}$. We now state the following theorem regarding these operators, which is a combination of Theorem 13.6.2 and Proposition 13.6.16 in \cite{mru:2022}.

\begin{theorem}
\label{thm:measureexistsruelle}
Let $T : X \longrightarrow X$ be an open, distance expanding, topologically transitive, continuous surjection on a compact metric space $X$. Suppose $\phi \in \mathcal{C} \left( X, \mathbb{R} \right)$ is H\"{o}lder continuous, then there exists $m_{\phi} \in \mathcal{M} (X)$ such that $L_{\phi}^{*} m_{\phi} = e^{{\rm Pr} (T, \phi)} m_{\phi}$. 
\end{theorem}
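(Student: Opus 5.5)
The statement is Theorem \ref{thm:measureexistsruelle}, which the paper quotes as a combination of Theorem 13.6.2 and Proposition 13.6.16 in \cite{mru:2022}. My plan is the standard Schauder--Tychonoff construction of an eigenmeasure. First, I note that $L_{\phi}$ is well defined on $\mathcal{C}(X,\mathbb{R})$. Since $T$ is open and distance expanding, the map $T$ is a local homeomorphism on balls of a fixed radius $\xi>0$. Hence for $x,x'$ close, the fibres $T^{-1}(x)$ and $T^{-1}(x')$ pair off through local inverse branches. The cardinality $\#T^{-1}(x)$ is therefore locally constant, and $L_{\phi}\psi$ is continuous. Surjectivity of $T$ gives $T^{-1}(x)\neq\emptyset$, so $L_{\phi}\mathbb{1}_X>0$ everywhere. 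By compactness, $\inf_x L_{\phi}\mathbb{1}_X(x)>0$.

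Next I consider the map $\Phi:\mathcal{M}(X)\to\mathcal{M}(X)$ given by
\[ \Phi(m)\ =\ \frac{L_{\phi}^{*}m}{L_{\phi}^{*}m(\mathbb{1}_X)}. \]
This is well defined because $L_{\phi}^{*}m(\mathbb{1}_X)=\int L_{\phi}\mathbb{1}_X\,\mathrm{d}m>0$, and $L_{\phi}^{*}m$ is a positive functional, hence a measure by Riesz. $\Phi$ is weak$^*$ continuous, since $L_{\phi}$ maps continuous functions to continuous functions. The space $\mathcal{M}(X)$ is a convex compact subset of the locally convex space $\mathcal{C}(X,\mathbb{R})^{*}$ with the weak$^*$ topology. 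Schauder--Tychonoff therefore gives a fixed point $m_{\phi}$, so $L_{\phi}^{*}m_{\phi}=\lambda m_{\phi}$ with $\lambda=\int L_{\phi}\mathbb{1}_X\,\mathrm{d}m_{\phi}>0$.

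The main obstacle is identifying $\lambda=e^{{\rm Pr}(T,\phi)}$, and this is where Hölder continuity, expansion and transitivity enter. Iterating gives $\lambda^{n}=\int L_{\phi}^{n}\mathbb{1}_X\,\mathrm{d}m_{\phi}$. Hence it suffices to show that $\frac1n\log L_{\phi}^{n}\mathbb{1}_X(x)\to{\rm Pr}(T,\phi)$ uniformly in $x$. I would obtain this in three steps.

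First, I would prove bounded distortion. For $x,x'$ with $\rho(x,x')<\xi$ and paired preimages $y,y'\in T^{-n}$, expansion gives $\rho(T^{j}y,T^{j}y')\le\lambda_0^{-(n-j)}\rho(x,x')$, where $\lambda_0>1$ is the expansion constant. The Hölder condition then makes $|S_n\phi(y)-S_n\phi(y')|$ bounded by a constant $C$ independent of $n$, so $L^{n}_{\phi}\mathbb{1}_X(x)\asymp L^{n}_{\phi}\mathbb{1}_X(x')$ for nearby points.

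Second, I would use topological transitivity together with openness. By compactness there is a fixed $N$ such that every $\xi$-ball is mapped by $T^{N}$ onto a set meeting every other ball; in practice one obtains $T^{N}(B)=X$ in the mixing case, and otherwise works with a cyclic decomposition. This extends the comparability to all pairs $x,x'\in X$, up to a factor $e^{C'}$ and a shift of at most $N$ in $n$.

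Third, I would note that for distance expanding maps the $n$-th preimage sets $T^{-n}(x)$ are $(n,\xi)$-separated. Moreover, every $(n,\epsilon)$-separated set is covered by preimage branches of a finite $\xi$-net. Combining these facts bounds $\sup_x L^{n}_{\phi}\mathbb{1}_X(x)$ above and below by $e^{\pm C''}$ times $\mathscr{R}$-type partition sums for $T$. This gives the limit ${\rm Pr}(T,\phi)$ and hence $\log\lambda={\rm Pr}(T,\phi)$.
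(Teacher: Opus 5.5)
Your argument is correct and is essentially the standard proof behind Theorem 13.6.2 and Proposition 13.6.16 of \cite{mru:2022}, which the paper simply quotes without proof: a Schauder--Tychonoff fixed point of $m\mapsto L_{\phi}^{*}m/L_{\phi}^{*}m(\mathbb{1}_X)$, followed by identifying the eigenvalue through the growth rate of $L_{\phi}^{n}\mathbb{1}_X$ using bounded distortion, transitivity and separated preimage sets. Two routine points should be made explicit. First, in the non-mixing case the bounded shift $j\le N$ is absorbed by $L_{\phi}^{n+j}\mathbb{1}_X\le \|L_{\phi}^{j}\mathbb{1}_X\|_{\infty}\,L_{\phi}^{n}\mathbb{1}_X$, which gives $\inf L_{\phi}^{n}\mathbb{1}_X\ge c\,\sup L_{\phi}^{n}\mathbb{1}_X$ directly. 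Second, for the lower bound you should note that an $(n,\epsilon)$-separated set has at most $K(\epsilon)$ points in each $n$-th inverse branch image, with $K(\epsilon)$ independent of $n$.
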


Let $T : X \longrightarrow X$ and $\phi \in \mathcal{C} \left( X, \mathbb{R} \right)$ satisfy the hypothesis in Theorem \ref{thm:measureexistsruelle}, pertaining to which $L_{\phi}$ is a Ruelle operator. Taking cue from Haydn (refer \cite{hay:1999}), one may study the normalised Ruelle operator, denoted by $\widehat{L_{\phi}}$, also defined on $\mathcal{C} \left( X, \mathbb{R} \right)$ and given by $\widehat{L_{\phi}} : = e^{- {\rm Pr} (T, \phi)} L_{\phi}$. In fact, from the results in \cite{mru:2022}, it turns out that $\widehat{L_{\phi}}$ can be extended to the space of $m_{\phi}$-integrable functions, $L^{1} (m_{\phi})$. With these notations in the bag, we have the following result which is a combination of Theorem 13.7.7 and Proposition 13.7.12 from \cite{mru:2022}. 

\begin{theorem}
\label{thm:mapseigenfn}
Let $T : X \longrightarrow X$ be an open, distance expanding, topologically transitive and continuous map on a compact metric space $X$. Suppose $\phi \in \mathcal{C} \left( X, \mathbb{R} \right)$ is a H\"{o}lder continuous function and $h_{\phi}$ denotes a fixed point of $\widehat{L_{\phi}}$ with $h_{\phi} \ge 0$ and $\displaystyle{\int h_{\phi} \mathrm{d}m_{\phi} = 1}$. Then, the measure $\mu_{\phi} = h_{\phi} m_{\phi}$ is the unique $T$-invariant equilibrium state for $\phi$.
\end{theorem}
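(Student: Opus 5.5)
The plan is to verify two things for $\mu_{\phi} = h_{\phi} m_{\phi}$: that it is $T$-invariant, and that it attains the supremum in the variational principle for $(T, \phi)$. Uniqueness then comes for free from Theorem \ref{thm:mapunique}, whose hypotheses (open, distance expanding, topologically transitive, continuous, $\phi$ H\"{o}lder) are exactly ours: any equilibrium state must coincide with the unique one, so $\mu_{\phi}$ is it.

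\emph{Invariance.} Write $P = {\rm Pr}(T, \phi)$. Theorem \ref{thm:measureexistsruelle} gives $\widehat{L_{\phi}}^{*} m_{\phi} = m_{\phi}$. For $g \in \mathcal{C}(X, \mathbb{R})$ we have the pull-out identity $\widehat{L_{\phi}}\big((g \circ T)\, \psi\big) = g\, \widehat{L_{\phi}} \psi$. Combined with $\widehat{L_{\phi}} h_{\phi} = h_{\phi}$ this yields
\[ \int (g \circ T)\, h_{\phi}\, \mathrm{d}m_{\phi} = \int \widehat{L_{\phi}}\big((g \circ T) h_{\phi}\big)\, \mathrm{d}m_{\phi} = \int g\, h_{\phi}\, \mathrm{d}m_{\phi}. \]
Hence $\mu_{\phi} \circ T^{-1} = \mu_{\phi}$, and $\mu_{\phi}$ is a probability measure by the normalisation of $h_{\phi}$. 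Since $h_{\phi}$ may a priori lie only in $L^{1}(m_{\phi})$, I would use the extension of $\widehat{L_{\phi}}$ to $L^{1}(m_{\phi})$ mentioned before the statement, or approximate $h_{\phi}$ by continuous functions.

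\emph{Equilibrium property.} The eigen-relation says that $m_{\phi}$ has Jacobian $J_{m_{\phi}} = e^{P - \phi}$ on every set on which $T$ is injective. By distance expansion and compactness, $T$ is injective on balls of some radius $\xi > 0$. Therefore $\mu_{\phi}$ has Jacobian
\[ J_{\mu_{\phi}} = e^{P - \phi}\, \frac{h_{\phi} \circ T}{h_{\phi}}. \]
Next I take a finite partition $\mathcal{P}$ of diameter less than $\xi$ and less than an expansivity constant; $T$ is forward expansive (Theorem 3.1.1 of \cite{pu:2010}), so $\mathcal{P}$ is a one-sided generator. Rokhlin's formula then gives $h_{\mu_{\phi}}(T) = \int \log J_{\mu_{\phi}}\, \mathrm{d}\mu_{\phi}$. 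The coboundary term $\log h_{\phi} \circ T - \log h_{\phi}$ integrates to zero by invariance, leaving $h_{\mu_{\phi}}(T) = P - \int \phi\, \mathrm{d}\mu_{\phi}$. This is exactly the equilibrium equation.

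\emph{Main obstacle.} The delicate step is justifying that $\log h_{\phi}$ is integrable, or better bounded, so that the coboundary cancellation and Rokhlin's formula are legitimate. I would first show that $h_{\phi}$ has a version that is continuous (indeed H\"{o}lder) and bounded away from $0$ and $\infty$. The route is the standard bounded-distortion estimate: for $x, y$ within $\xi$, inverse branches of $T^{n}$ pair preimages and contract, and the H\"{o}lder property of $\phi$ bounds $|S_{n}\phi(x') - S_{n}\phi(y')|$ uniformly in $n$. This gives uniform H\"{o}lder bounds on $\widehat{L_{\phi}}^{n}\mathbb{1}$, so its Ces\`{a}ro averages converge to a continuous fixed point. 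Strict positivity of that fixed point follows from openness plus topological transitivity: some iterate of every ball covers $X$ (a finite cover argument by compactness), so the fixed point cannot vanish anywhere. Finally, I expect that identifying the given $h_{\phi}$ with this continuous one needs a one-dimensionality argument for the fixed space of $\widehat{L_{\phi}}$ in $L^{1}(m_{\phi})$, which is where transitivity enters again.
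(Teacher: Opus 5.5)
The paper gives no argument of its own here. The statement is imported as a combination of Theorem 13.7.7 and Proposition 13.7.12 of \cite{mru:2022}.

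Your proposal is a genuinely different, self-contained route, and it is sound. It has three parts:
\begin{enumerate}
\item Invariance of $h_{\phi} m_{\phi}$ follows from $\widehat{L_{\phi}}^{*} m_{\phi} = m_{\phi}$ and the pull-out identity.
\item The equilibrium equation follows from the Jacobian $e^{P - \phi} (h_{\phi} \circ T)/h_{\phi}$, with $P = {\rm Pr}(T, \phi)$, together with Rokhlin's formula on a generating partition on whose atoms $T$ is injective.
\item Uniqueness is outsourced to Theorem \ref{thm:mapunique}. This is legitimate, since the paper quotes that result independently.
\end{enumerate}
This is the standard Jacobian argument. Compared with the citation, it makes explicit which inputs are really needed: the Jacobian of $m_{\phi}$, regularity and positivity of the eigenfunction, and ergodicity. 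It also correctly spots the $L^{1}$ subtlety that the citation silently absorbs. Surjectivity, needed to invoke Theorem \ref{thm:measureexistsruelle}, is automatic: $T(X)$ is closed and open, and if it were proper, the nonempty open set $X \setminus T(X)$ would never meet $T^{n}(U)$ for $n \ge 1$.

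Two points need repair.

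\textbf{First}, ``some iterate of every ball covers $X$'' is topological exactness, which requires mixing. For instance, $T(z, i) = (z^{2}, 1 - i)$ on $\{|z| = 1\} \times \{0, 1\}$ is open, distance expanding and transitive, yet every $T^{n}(B)$ of a small ball lies in a single circle. What does hold is that for each nonempty open $U$ some finite union $\bigcup_{n = 0}^{N} T^{n}(U)$ equals $X$, i.e.\ backward orbits are dense. This suffices:
\begin{itemize}
\item A zero $x_{0}$ of a nonnegative continuous fixed point propagates to the dense set $\bigcup_{n} T^{-n}(x_{0})$, so the fixed point vanishes identically.
\item The same covering property, together with $\int \widehat{L_{\phi}}^{n} \mathbb{1}\, \mathrm{d}m_{\phi} = 1$, upgrades your local distortion estimate to uniform bounds $C^{-1} \le \widehat{L_{\phi}}^{n} \mathbb{1} \le C$. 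You need these before applying Arzel\`{a}--Ascoli.
\end{itemize}

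\textbf{Second}, the identification step can be closed with what you already have, without ever touching $\log h_{\phi}$ for the given $L^{1}$ fixed point.
\begin{itemize}
\item Run your Jacobian and Rokhlin computation for the continuous fixed point $h_{0} \ge c > 0$. Then $\mu_{0} = h_{0} m_{\phi}$ is an equilibrium state, hence the unique one by Theorem \ref{thm:mapunique}.
\item $\mu_{0}$ is therefore ergodic. If $\mu_{0} = t \mu_{1} + (1 - t) \mu_{2}$ with $0 < t < 1$ and $\mu_{i}$ invariant, affinity of $\nu \mapsto h_{\nu}(T) + \int \phi\, \mathrm{d}\nu$ forces both $\mu_{i}$ to be equilibrium states, so both equal $\mu_{0}$.
\item Your invariance step makes $h_{\phi} m_{\phi}$ an invariant probability. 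It is absolutely continuous with respect to $\mu_{0}$ because $h_{0}$ is bounded below.
\item An invariant probability absolutely continuous with respect to an ergodic one coincides with it, by Birkhoff's theorem.
\end{itemize}
Thus $h_{\phi} m_{\phi} = \mu_{0}$, which is exactly the one-dimensionality you anticipated.
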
 

A further normalisation of $L_{\phi}$ is also possible by adding a coboundary, as follows: For $T : X \longrightarrow X$ and $\phi \in \mathcal{C} \left( X, \mathbb{R} \right)$ satisfying the hypothesis in Theorem \ref{thm:mapseigenfn}, define 
\begin{equation}
\label{eqn:normalmaps}
\hat{\phi}\ \ :=\ \ \phi + \log h_{\phi} - \log \left( h_{\phi} \circ T \right). 
\end{equation}

If $\hat{\phi}$ is H\"{o}lder continuous, then the measure $m_{\hat{\phi}}$ satisfying $L_{\hat{\phi}}^{*} m_{\hat{\phi}} = e^{{\rm Pr} (T, \hat{\phi})} m_{\hat{\phi}}$ is the unique equilibrium state for $\hat{\phi}$. Interested readers can refer \cite{mru:2022} for more details. 

Motivated by this theory for maps, we study the connection between the unique equilibrium state of a H\"{o}lder continuous potential defined on $\Omega_{{\rm DS}}$ and the eigenmeasure associated to the dual of its Ruelle operator, in the setting of a holomorphic correspondence $\Gamma$ on $\widehat{\mathbb{C}}$. We start towards that with a small observation. 

As before, let $\Gamma$ be a holomorphic correspondence on $\widehat{\mathbb{C}}$ and let $\Omega_{{\rm DS}}$ be the support of the Dinh-Sibony measure of $\Gamma$. Consider $f \in \mathcal{C} \left( \Omega_{{\rm DS}}, \mathbb{R} \right)$. Set $F = f \circ \Pi_{0}$. Suppose there exists a measure $\mu_{F}$ such that $L_{F}^{*} \mu_{F} = \Lambda_{F} \mu_{F}$ for some $\mu_{F} \in \mathcal{M} \left( \mathscr{P}^{\Gamma}_{{\rm inv}} \left( \Omega_{{\rm DS}} \right), \sigma^{\Gamma} \right)$ and $\Lambda_{F} \in \mathbb{R}$. Denote by $\nu_{f}$ the measure $\left( \Pi_{0} \right)_{*} \mu_{F}$. Then we have $\mathcal{L}_{f}^{*} \nu_{f} = \Lambda_{F} \nu_{f}$. This can be easily seen, as we have for any $g \in \mathcal{C} \left( \Omega_{{\rm DS}}, \mathbb{R} \right)$, 
\begin{eqnarray*}
\left( \mathcal{L}_{f}^{*} \nu_{f} \right) (g) & = & \hspace{+0.4cm} \int\limits_{\Omega_{{\rm DS}}} \mathcal{L}_{f} (g) \mathrm{d}\nu_{f}\ \ \ \ \hspace{+1.6cm} =\ \ \int\limits_{\mathscr{P}^{\Gamma}_{{\rm inv}} \left( \Omega_{{\rm DS}} \right)} \left( \mathcal{L}_{f} (g) \right) \circ \Pi_{0} \mathrm{d}\mu_{F} \\ 
& = & \int\limits_{\mathscr{P}^{\Gamma}_{{\rm inv}} \left( \Omega_{{\rm DS}} \right)} L_{F} \left( g \circ \Pi_{0} \right) \mathrm{d}\mu_{F}\ \ \ \ =\ \ \int\limits_{\mathscr{P}^{\Gamma}_{{\rm inv}} \left( \Omega_{{\rm DS}} \right)} \Lambda_{F} \left( g \circ \Pi_{0} \right) \mathrm{d}\mu_{F} \\ 
& = & \Lambda_{F} \int \limits_{\Omega_{{\rm DS}}} g \mathrm{d}\nu_{f}. 
\end{eqnarray*} 

\subsection{Proof of Theorem \ref{thm:ruelleuniqueconnnect}} 

We are now ready to prove Theorem \ref{thm:ruelleuniqueconnnect}. 

\begin{proof}[of Theorem \ref{thm:ruelleuniqueconnnect}]
Let $f \in \mathcal{C} \left( \Omega_{{\rm DS}}, \mathbb{R} \right)$ be a H\"{o}lder continuous function satisfying $\mathcal{L}_{f} \mathbb{1}_{\Omega_{{\rm DS}}} = e^{{\rm Pr} (\Gamma, f)} \mathbb{1}_{\Omega_{{\rm DS}}}$ and let $F = f \circ \Pi_{0}$. First note that, by Theorem \ref{thm:measureexistsruelle}, we have a measure $m_{F}$ satisfying
\begin{equation}
\label{eqn:ruellestar}
L_{F}^{*} m_{F}\ \ =\ \ e^{{\rm Pr} \left( \sigma^{\Gamma}, F \right)} m_{F}. 
\end{equation}

Thus, 
\[ L_{F} \left( \mathbb{1}_{\mathscr{P}^{\Gamma}_{{\rm inv}} \left( \Omega_{{\rm DS}} \right)} \right)\ \ =\ \ L_{\left( f \circ \Pi_{0} \right)} \left( \mathbb{1}_{\Omega_{{\rm DS}}} \circ \Pi_{0} \right)\ \ =\ \ \left( \mathcal{L}_{f} \mathbb{1}_{\Omega_{{\rm DS}}} \right) \circ \Pi_{0}\ \ =\ \ \left(e^{{\rm Pr} (\Gamma, f)} \mathbb{1}_{\Omega_{{\rm DS}}} \right) \circ \Pi_{0}, \] 
and hence we have, $L_{F} \mathbb{1}_{\mathscr{P}^{\Gamma}_{{\rm inv}} \left( \Omega_{{\rm DS}} \right)} = e^{{\rm Pr} \left( \sigma^{\Gamma}, F \right)} \mathbb{1}_{\mathscr{P}^{\Gamma}_{{\rm inv}} \left( \Omega_{{\rm DS}} \right)}$. This results in 
\begin{equation}
\label{eqn:ruelleoneds}
\widehat{L_{F}} \mathbb{1}_{\mathscr{P}^{\Gamma}_{{\rm inv}} \left( \Omega_{{\rm DS}} \right)}\ \ =\ \ \mathbb{1}_{\mathscr{P}^{\Gamma}_{{\rm inv}} \left( \Omega_{{\rm DS}} \right)}.
\end{equation} 

Equations \eqref{eqn:ruellestar} and \eqref{eqn:ruelleoneds} when combined with the conclusion from Theorem \ref{thm:mapseigenfn} (which also ensures $m_{F}$ is $\sigma^{\Gamma}$-invariant in this case) and the ideas from the proof of Theorem \ref{thm:uniqueness}, results in the fact that $\nu_{f} = \left( \Pi_{0} \right)_{*} m_{F}$ is the unique equilibrium state for $f$. The discussion at the end of Subsection \ref{subsec}, when applied to the Equation \eqref{eqn:ruellestar} implies that $\nu_{f}$ also satisfies $\mathcal{L}_{f}^{*} \nu_{f} = e^{{\rm Pr} (\Gamma, f)} \nu_{f}$, thereby completing the proof. 
\end{proof}

\begin{remark}
As we mentioned earlier, one can study the main theorems of this manuscript, namely Theorems \ref{thm:uniqueness} and \ref{thm:ruelleuniqueconnnect} with the domain of the correspondence being $\widehat{\mathbb{C}}$ provided we have topological transitivity of the map $\sigma^{\Gamma}$ on the space $\mathscr{P}^{\Gamma} \left( \widehat{\mathbb{C}} \right)$. The main reason for restricting the dynamics of $\Gamma$ to $\Omega_{{\rm DS}}$ and that of $\sigma^{\Gamma}$ to $\mathscr{P}^{\Gamma}_{{\rm inv}} \left( \Omega_{{\rm DS}} \right)$ is to use the density of pre-images of points in $\Omega_{{\rm DS}}$, which leads to transitivity. Further note that \cite{ss:2025} predominantly studied the Ruelle operator associated to the correspondence with the domain being $\widehat{\mathbb{C}}$. 
\end{remark}

\section{Comments on the hypothesis of Theorem \ref{thm:ruelleuniqueconnnect}} 
\label{sec:six} 

We sign off this article with a few comments on the hypothesis of Theorem \ref{thm:ruelleuniqueconnnect}. We also mention two examples of holomorphic correspondences that are related to the contents of this work.

Firstly, we mention about an obstruction regarding the normalisation of a real valued H\"{o}lder continuous function defined on $\Omega_{{\rm DS}}$. In the case of dynamics of maps, this is typically done, using a coboundary condition. Note that, as seen in Equation \eqref{eqn:normalmaps}, normalising a function $\phi \in \mathcal{C} \left( X, \mathbb{R} \right)$ involves the Koopman operator $U_{T} (h_{\phi}) = h_{\phi} \circ T$ associated to the function $h_{\phi}$ as obtained from Theorem \ref{thm:mapseigenfn}. While the work \cite{ss:2025} identified the eigenfunction $h_{f}$ associated to a potential $f \in \mathcal{C} \left( \Omega_{{\rm DS}}, \mathbb{R} \right)$ for the dynamics of a specific family of correspondences, the authors did not use the coboundary type condition to achieve their goal. A major obstruction to generalising the normalisation procedure to the case of correspondences is the fact that each point possibly has more than one image. Thus, the Koopman operator for correspondences looks a bit different. This object was considered in the works \cite{lon:2024} and \cite{sb:2026}. 

As defined in \cite{sb:2026}, for a holomorphic correspondence $\digamma$ with topological degree $d_{{\rm top}}$, the Koopman operator associated to a potential $f \in \mathcal{C} \left( \Omega_{{\rm DS}}, \mathbb{R} \right)$ is given by 
\[ \left( U_{\digamma} (f) \right) (z)\ \ =\ \ \frac{1}{d_{{\rm top}}} \sideset{}{'} \sum_{w\, \in\, \digamma^{\dagger}(z)} f(w). \] 

Now, consider $f \in \mathcal{C} \left( \Omega_{{\rm DS}}, \mathbb{R} \right)$. Suppose $h_{f}$ satisfies $\mathcal{L}_{f} h_{f} = e^{{\rm Pr} (\Gamma, f)} h_{f}$ (see Theorem 7.2 in \cite{ss:2025} for more details on this), then one may attempt to give a coboundary type condition for normalising $f$ by defining 
\begin{equation}
\label{eqn:normalcorr}
\hat{f}\ \ :=\ \ f + \log h_{f} - \log \left( U_{\digamma} (h_{f}) \right). 
\end{equation}
In the case of dynamics of maps, Equation \eqref{eqn:normalmaps} results in $\displaystyle{\widehat{L_{\psi}} = M_{\phi}^{-1} \circ \widehat{L_{\phi}} \circ M_{\phi}}$, for a suitable multiplication operator $M_{\phi}$, where $\psi = \hat{\phi}$. This step makes a lot of calculations easier. Such a nice characterisation has not yet been attained, in the case of correspondences and it is easy to observe that Equation \eqref{eqn:normalcorr} doesn't result in such a conclusion. So, one requires new ideas for doing this procedure in the setting of correspondences. A key property of normalisation is getting $\mathbb{1}_{\Omega_{{\rm DS}}}$ as the eigenfunction of the operator $\mathcal{L}_{f}$. This is the essential component in our investigation regarding the conclusion in Theorem \ref{thm:ruelleuniqueconnnect} as well and hence we made the assumption that $f \in \mathcal{C} \left( \Omega_{{\rm DS}}, \mathbb{R} \right)$ satisfies $\mathcal{L}_{f} \mathbb{1}_{\Omega_{{\rm DS}}} = e^{{\rm Pr} (\Gamma, f)} \mathbb{1}_{\Omega_{{\rm DS}}}$ in our hypothesis. Having explained the motivation behind the hypothesis, we now turn our attention to give examples of situations where this expression holds. 

Note that for a holomorphic correspondence $\Gamma$ with topological degree $d_{{\rm top}}$, if we choose the potential to be $f \equiv 0$, the constant zero function, we get 
\[ \left( \mathcal{L}_{f} \mathbb{1}_{\Omega_{{\rm DS}}} \right) (z)\ \ =\ \ \sum_{w\, \in\, \digamma^{\dagger} (z)} e^{f(w)} \mathbb{1}_{\Omega_{{\rm DS}}} (w)\ \ =\ \ d_{{\rm top}} \mathbb{1}_{\Omega_{{\rm DS}}} (z). \] 
Thus, it is sufficient for us to focus on examples of holomorphic correspondences $\Gamma$ satisfying ${\rm Pr} (\Gamma, 0) = \log \left( d_{{\rm top}} \right)$, meaning holomorphic correspondences on $\widehat{\mathbb{C}}$ whose topological entropy is equal to $\log \left( d_{{\rm top}} \right)$. 

Let $R_{1}, R_{2}, \cdots R_{n}$ be rational maps on $\widehat{\mathbb{C}}$ with their respective degrees given by $d_{1}, d_{2}, \cdots d_{n}$, with at least one of the maps having degree strictly greater than $1$. Consider the rational semigroup $S$ which is generated by these $n$ rational maps. It is well known that $S$ can be viewed as a holomorphic correspondence, which we shall denote by $\Gamma_{S}$. A key result from \cite{bs:2021} (see Corollary 1.9 in the paper) is that the topological entropy of $\Gamma_{S}$ is given by $\displaystyle{\log \left( \sum_{j\, =\, 1}^{n} d_{j} \right)}$. It also turns out that $\displaystyle{d_{S} := \sum_{j\, =\, 1}^{n} d_{j}}$ is the topological degree of $\Gamma_{S}$. If one wishes to work with the domain being $\Omega_{{\rm DS}}$ for the correspondence $\Gamma_{S}$, we need to ensure that the topological entropy remains to be $\log d_{S}$ even when $\Gamma_{S}$ is restricted to $\Omega_{{\rm DS}}$. Otherwise, one can proceed with the arrangement discussed earlier in this section. 

Note that for a finitely generated rational semigroup $S$ whose associated holomorphic correspondence is denoted by $\Gamma_{S}$, we have the set $\Omega_{{\rm DS}}$ to be equal to the Julia set $J_{S}$ of the rational semigroup $S$. The work \cite{hm:1996} studied these objects in a great detail. Also mentioned in this work is the concept of nearly Abelian rational semigroup. Suppose all the generators of $S$ have degree greater than or equal to $2$. Further assume that $S$ is nearly Abelian. Then, from Theorem 4.1 in \cite{hm:1996}, we have $J_{S} = J(g)$ for all maps $g \in S$. Here $J(g)$ denotes the Julia set of the rational map $g$. Thus, $J_{S}$ becomes completely invariant. Now, it follows from Theorem 4.2 of \cite{bs:2021} that the topological entropy of $\Gamma_{S}$ restricted to the domain $\Omega_{{\rm DS}} = J_{S}$ is $\log d_{S}$, where $d_{S}$ is the topological degree of the correspondence $\Gamma_{S}$. 

We end this paper by giving another family of examples of holomorphic correspondences on $\widehat{\mathbb{C}}$ that one can work with in this case. This example was first considered in \cite{bp:1994} and was investigated in detail in \cite{bl:2020}. For $a \in \widehat{\mathbb{C}} \setminus \{1\}$, define the correspondence $\mathcal{F}_{a}$ to be,
\[ \left( \dfrac{az + 1}{z + 1} \right)^{2} + \left( \dfrac{az + 1}{z + 1} \right) \left( \dfrac{aw - 1}{w - 1} \right) + \left( \dfrac{aw - 1}{w - 1} \right)^{2}\ \ =\ \ 3. \] 
It is interesting to study the correspondence $\mathcal{F}_{a}$ when the parameter $a$ belongs to a set called the \emph{Klein combination locus}, denoted by $\mathcal{K}$. This was studied in detail in \cite{bl:2020}. Also, see Definition 3.1 in the recent work \cite{vd:2026} for more details on this. Further, this work also computed the topological entropy of the family $\mathcal{F}_{a}$, where $a \in \mathcal{K} \subset \widehat{\mathbb{C}}$ to be $\log 2$. Note that the topological degree of $\mathcal{F}_{a}$ is $2$. Interested readers may refer to the proof of Theorem 0.2 in \cite{vd:2026} for details. 

\bigskip 

\emph{Authors' contact coordinates:} \\ 

{\bf Bharath Krishna Seshadri} \\ 
Indian Institute of Science Education and Research Thiruvananthapuram (IISER-TVM). \\ 
email: \texttt{bharathmaths21@iisertvm.ac.in}  
\medskip 

{\bf Shrihari Sridharan} \\ 
Indian Institute of Science Education and Research Thiruvananthapuram (IISER-TVM). \\ 
email: \texttt{shrihari@iisertvm.ac.in}

\end{document}